\NeedsTeXFormat{LaTeX2e}

\documentclass[10pt,reqno]{amsart}
\usepackage{latexsym,amsmath}
\usepackage{times}
\usepackage{amsfonts}
\usepackage{amssymb}
\usepackage{latexsym}
\usepackage{hyperref}

\usepackage{times}
\usepackage{bbm}

\usepackage[dvips]{epsfig}
\graphicspath{{./Fig_eps/}{./Eps/}}
\DeclareGraphicsExtensions{.ps,.eps}
\usepackage{color}
\usepackage{fullpage}

\newcommand\nix{\,\cdot\,}

\newcommand\nb{\partial^\omega}
\newcommand\nbh[3]{\partial^\omega\bc{#1,#2,#3}}
\newcommand\nbg[2]{\partial^\omega\bc{{#1},{#2}}}

\newcommand\bplp{\pi_{n,p,k}^{\mathrm{pr}}}
\newcommand\plp{\pi_{n,m,k}^{\mathrm{pr}}}
\newcommand\prcp{\pi_{n,m,k}^{\mathrm{rr}}}
\newcommand\prm{\plp}
\newcommand\rrm{\prcp}

\newcommand\atom{\delta}

\newcommand\thet{\vartheta}

\newcommand{\beq}{\begin{equation}} \newcommand{\eeq}{\end{equation}}

\newcommand\dd{{\mathrm d}}
\newcommand\ism{\cong}

\newcommand\G{\vec G}
\newcommand\T{\vec T}

\DeclareMathOperator{\corr}{bias}

\newcommand\gnp{G(n,p)}

\newcommand\gnm{\G(n,m)}
\newcommand\Gnm{\G}

\numberwithin{equation}{section}

\def\vec#1{\mbox{\boldmath$\displaystyle#1$}}

\newcommand{\Zkc}{Z_{k}}

\newcommand{\dc}{d_{k,\mathrm{cond}}}


\newcommand{\dk}{d_{k-\mathrm{col}}}

\newcommand\MPCPS{Mathematical Proceedings of the Cambridge Philosophical Society}
\newcommand\IPL{Information Processing Letters}
\newcommand\COMB{Combinatorica}

\DeclareMathOperator{\pr}{\mathbb P}

\newcommand\plSIGMA{\hat\SIGMA}
\newcommand\plG{\hat\G}

\newcommand\SIGMA{\vec\sigma}

\newcommand\TAU{\vec\tau}

\newtheorem{definition}{Definition}[section]
\newtheorem{claim}[definition]{Claim}

\newtheorem{remark}[definition]{Remark}
\newtheorem{theorem}[definition]{Theorem}
\newtheorem{lemma}[definition]{Lemma}
\newtheorem{proposition}[definition]{Proposition}
\newtheorem{corollary}[definition]{Corollary}

\newcommand\id{\mathrm{id}}

\newcommand\cA{\mathcal{A}}
\newcommand\cB{\mathcal{B}}
\newcommand\cC{\mathcal{C}}

\newcommand\cF{\mathcal{F}}
\newcommand\cG{\mathcal{G}}
\newcommand\cE{\mathcal{E}}
\newcommand\cU{\mathcal{U}}

\newcommand\cS{\mathcal{S}}

\newcommand\cK{\mathcal{K}}

\newcommand\cL{\mathcal{L}}

\newcommand\cP{\mathcal{P}}
\newcommand\cX{\mathcal{X}}
\newcommand\cY{\mathcal{Y}}

\def\cR{{\mathcal R}}
\def\cC{{\mathcal C}}
\def\cE{{\mathcal E}}

\newcommand\eps{\varepsilon}

\newcommand\Erw{\mathbb{E}}

\newcommand{\vecone}{\vec{1}}

\newcommand{\Po}{{\rm Po}}

\newcommand{\Be}{{\rm Be}}

\newcommand\TV[1]{\left\|{#1}\right\|_{\mathrm{TV}}}

\newcommand{\bink}[2] {{{#1}\choose {#2}}}

\newcommand\ra{\rightarrow}

\newcommand\bc[1]{\left({#1}\right)}
\newcommand\cbc[1]{\left\{{#1}\right\}}
\newcommand\bcfr[2]{\bc{\frac{#1}{#2}}}
\newcommand{\bck}[1]{\left\langle{#1}\right\rangle}
\newcommand\brk[1]{\left\lbrack{#1}\right\rbrack}

\newcommand\norm[1]{\left\|{#1}\right\|}
\newcommand\abs[1]{\left|{#1}\right|}

\newcommand\RR{\mathbb{R}}

\newcommand{\whp}{w.h.p.}

\newcommand{\tensor}{\otimes}

\newcommand{\Erdos}{Erd\H{o}s}
\newcommand{\Renyi}{R\'enyi}

\newcommand\Lem{Lemma}
\newcommand\Prop{Proposition}
\newcommand\Thm{Theorem}
\newcommand\Cor{Corollary}
\newcommand\Sec{Section}

\begin{document}

\title{Local convergence of random graph colorings}

\author{Amin Coja-Oghlan$^*$, Charilaos Efthymiou$^{**}$ and Nor Jaafari}
\thanks{$^*$The research leading to these results has received funding from the European Research Council under the European Union's Seventh 
Framework Programme (FP/2007-2013) / ERC Grant Agreement n.\ 278857--PTCC}
\thanks{$^{**}$ Research is supported by ARC GaTech.}
\date{\today}

\address{Amin Coja-Oghlan, {\tt acoghlan@math.uni-frankfurt.de}, Goethe University, Mathematics Institute, 10 Robert Mayer St, Frankfurt 60325, Germany.}

\address{Charilaos Efthymiou, {\tt efthymiou@gmail.com}, Georgia Tech, College of Computing, 266 Ferst Drive,   Atlanta, 30332, USA.}

\address{Nor Jaafari, {\tt jaafari@math.uni-frankfurt.de}, Goethe University, Mathematics Institute, 10 Robert Mayer St, Frankfurt 60325, Germany.}

\maketitle

\begin{abstract}
\noindent
Let $\G=\gnm$ be a random graph whose average degree $d=2m/n$ is below the $k$-colorability threshold.
If we sample a $k$-coloring $\SIGMA$ of $\G$ uniformly at random, what can we say about the correlations between the colors assigned to
vertices that are far apart?
According to a prediction from statistical physics, for average degrees below the so-called {\em condensation threshold}  $\dc$,
the colors assigned to far away vertices are asymptotically independent [Krzakala et al.: Proc.~National Academy of Sciences 2007].
We prove this conjecture for $k$ exceeding a certain constant $k_0$.
More generally, we investigate the joint distribution of the $k$-colorings that $\SIGMA$ induces locally on the bounded-depth
neighborhoods of any fixed number of vertices.
In addition, we point out an implication on the \emph{reconstruction problem}.

\bigskip
\noindent
\emph{Mathematics Subject Classification:} 05C80 (primary), 05C15 (secondary)
\end{abstract}

\section{Introduction and results}

\noindent{\em
Let $\G=\gnm$ denote the random graph on the vertex set $\brk n=\cbc{1,\ldots,n}$ with precisely $m$ edges.
Unless specified otherwise, we assume that $m=m(n)=\lceil dn/2\rceil$ for a fixed number $d>0$.
As usual, $\gnm$ has a property $\cA$ ``with high probability'' (``\whp'') if $\lim_{n\to\infty}\pr\brk{\gnm\in\cA}=1$.
}

\subsection{Background and motivation} 
Going back to the seminal paper of \Erdos\ and \Renyi~\cite{ER} that founded the theory of random graphs,
the problem of coloring $\gnm$ remains one
of the longest-standing challenges in probabilistic combinatorics.
Over the past half-century, efforts have been devoted to determining the likely value of the chromatic number
 $\chi(\gnm)$~\cite{AchNaor,BBColor,LuczakColor,Matula} and its concentration~\cite{AlonKriv,Luczak,ShamirSpencer}
as well as to algorithmic problems such as  constructing or sampling colorings of the random graph~\cite{AchMolloy,DyerFr10,Efthy14,Efthy12,GMcD,KSud}.

A tantalising feature of the random graph coloring problem is the interplay between local and global effects.
{\em Locally} around almost any vertex the random graph is bipartite \whp\
In fact, for any fixed average degree $d>0$ and for any fixed $\omega$ the depth-$\omega$ neighborhood
of all but $o(n)$ vertices is just a tree \whp\
Yet {\em globally} the chromatic number of the random graph may be large.
Indeed, for any number $k\geq3$ of colors there exists a {\em sharp threshold sequence} $\dk=\dk(n)$
such that  for any fixed $\eps>0$, $\gnm$ is $k$-colorable \whp\ if $2m/n<\dk(n)-\eps$,
whereas the random graphs fails to be $k$-colorable \whp\ if $2m/n>\dk(n)+\eps$~\cite{AchFried}.
Whilst the thresholds $\dk$ are not known precisely, there are close upper and lower bounds.
The best current ones read
	\begin{equation}\label{eqdk}
	\dc=(2k-1)\ln k-2\ln 2+\delta_k\leq\liminf_{n\to\infty}\dk(n)\leq \limsup_{n\to\infty}\dk(n)\leq(2k-1)\ln k-1+\eps_k,
	\end{equation}
where  $\lim_{k\to\infty}\delta_k=\lim_{k\to\infty}\eps_k=0$~\cite{AchNaor,Covers,Danny}.
To be precise,
the lower bound in~(\ref{eqdk})
is formally defined as
	\begin{equation}\label{eqdc}
	\dc=\inf\cbc{d>0:\limsup_{n\to\infty}\Erw[\Zkc(\gnm)^{1/n}]<k(1-1/k)^{d/2}}.
	\end{equation}
This number, called the {\em condensation threshold} due to a connection with statistical physics~\cite{pnas},
can be computed precisely for $k$ exceeding a certain constant $k_0$~\cite{Cond}.
An asymptotic expansion yields the expression in~(\ref{eqdk}).

The contrast between local and global effects was famously pointed out by \Erdos,
who produced $\gnm$ as an example of a graph that simultaneously has a high chromatic number and a high girth~\cite{Erdos}.
The present paper aims at a more precise understanding of this collusion between short-range and long-range effects.
For instance, do global effects entail ``invisible'' constraints  on the colorings of the local neighborhoods so that certain ``local'' colorings do not
extend to a coloring of the entire graph?
And what correlations do typically exist between the colors of vertices at a large distance?

A natural way of formalising these questions is as follows.
Let $k\geq3$ be a number of colors, fix some number $\omega>0$
 and assume that $d<\dc$ so that  $\G=\gnm$ is $k$-colorable \whp\
Moreover, pick a vertex $v_0$ and fix a $k$-coloring $\sigma_0$ of its depth-$\omega$ neighborhood.
How many ways are there to extend $\sigma_0$ to a $k$-coloring of the entire graph, and how does this number depend on $\sigma_0$?
Additionally, if we pick a vertex $v_1$ that is ``far away'' from $v_0$ and if we pick another $k$-coloring $\sigma_1$ of the
depth-$\omega$ neighborhood of $v_1$, is there a $k$-coloring $\sigma$ of $\G$ that simultaneously extends both $\sigma_0$ and $\sigma_1$?
If so, how many such $\sigma$ exist, and how does this depend on $\sigma_0,\sigma_1$?

The main result of this paper (\Thm~\ref{Thm_xlwc} below) provides a very neat and accurate answer to these questions.
It shows that \whp\ all ``local'' $k$-colorings $\sigma_0$ extend
to {\em asymptotically  the same} number of $k$-colorings of the entire graph.
Let us write $\cS_k(G)$ for the set
of all $k$-colorings of a graph $G$ and
let $\Zkc(G)=|\cS_k(G)|$ be the number of $k$-colorings.
Moreover, let $\partial^\omega(G,v_0)$ be the depth-$\omega$ neighborhood of a vertex $v_0$ in $G$
	(i.e., the subgraph of $G$ obtained by deleting all vertices at distance greater than $\omega$ from $v_0$).
Then \whp\ any $k$-coloring $\sigma_0$ of $\partial^\omega(\G,v_0)$ has $$\frac{(1+o(1))\Zkc(\G)}{\Zkc(\partial^\omega(\G,v_0))}$$ extensions
to a $k$-coloring of $\G$.
Moreover, if we pick another vertex $v_1$ at random and fix some $k$-coloring $\sigma_1$ of the depth-$\omega$ neighborhood of $v_1$,
then \whp\ the number of joint extensions of $\sigma_0,\sigma_1$ is 
	$$\frac{(1+o(1))\Zkc(\G)}{\Zkc(\partial^\omega(\G,v_0))\Zkc(\partial^\omega(\G,v_1))}.$$
In other words, if we choose a $k$-coloring $\SIGMA$ uniformly at random, then the distribution of the $k$-coloring
that $\SIGMA$ induces on the subgraph $\partial^\omega(\G,v_0)\cup\partial^\omega(\G,v_1)$, which is a forest \whp, is asymptotically uniform.
The same statement extends to any fixed number $v_0,\ldots,v_l$ of vertices.

\subsection{Results}
The appropriate formalism for describing the limiting behavior of the local structure of the random graph is the
concept of \emph{local weak convergence}~\cite{Aldous,BenjaminiSchramm}.
The concrete instalment of the formalism that we employ is reminiscent of that used in~\cite{BST,MMS}.
(\Cor~\ref{Cor_xlwc} below provides a statement that is equivalent to the main result but that avoids the formalism of local weak convergence.)

Let $\mathfrak G$ be the set of all locally finite connected graphs whose vertex set is a countable subset of $\RR$.
Further, let $\mathfrak G_k$ be the set of all triples $(G,v_0,\sigma)$ such that $G\in\mathfrak G$, $\sigma:V(G)\to[k]$ is a $k$-coloring of $G$ and $v_0\in V(G)$ is a
distinguished vertex that we call the {\em root}.
We refer to  $(G,v_0,\sigma)$ as a {\em rooted $k$-colored graph}.
If $(G',v_0',\sigma')$ is another rooted $k$-colored graph, 
we call $(G,v_0,\sigma)$ and $(G',v_0',\sigma')$ {\em isomorphic} ($(G,v_0,\sigma)\ism(G',v_0',\sigma')$)
if there is an isomorphism $\varphi:G\to G'$ such that $\varphi(v_0)=\varphi(v_0')$, $\sigma=\sigma'\circ\varphi$
and such that for any $v,w\in V(G)$ such that $v<w$ we have $\varphi(v)<\varphi(w)$.
Thus, $\varphi$ preserves the root, the coloring and the order of the vertices (which are reals).
Let $[G,v_0,\sigma]$ be the isomorphism class of $(G,v_0,\sigma)$ and
let $\cG_k$ be the set of all isomorphism classes of rooted $k$-colored graphs.

For an integer $\omega\geq0$ and $\Gamma\in\cG_k$
we let $\partial^\omega\Gamma$ denote the isomorphism class of the rooted  $k$-colored graph obtained from $\Gamma$ by 
deleting all vertices whose distance from the root exceeds $\omega$.
Then any $\Gamma$, $\omega\geq0$ give rise to a function
	\begin{equation}\label{eqtopology}
	\cG_k\to\cbc{0,1},\qquad\Gamma'\mapsto\vecone\cbc{\partial^\omega\Gamma'= \partial^\omega\Gamma}.
	\end{equation}
We endow $\cG_k$ with the coarsest topology that makes all of these functions continuous.
Further, for  $l\geq1$ we equip $\cG_k^l$ with the corresponding product topology.
Additionally, the set $\cP(\cG_k^l)$ of probability measures on $\cG_k^l$ carries the weak topology,
as does the set $\cP^2(\cG_k^l)$ of all probability measures on $\cP(\cG_k^l)$.
The spaces $\cG_k^l,\cP(\cG_k^l),\cP^2(\cG_k^l)$ are Polish~\cite{Aldous}. 
For $\Gamma\in\cG_k$ we denote by $\atom_{\Gamma}\in\cP(\cG_k)$ the Dirac measure that puts mass one on  $\Gamma$.

Let $G$ be a finite $k$-colorable graph whose vertex set $V(G)$ is contained in $\RR$ and let $v_1,\ldots,v_l\in V(G)$.
Then we can define a probability measure on $\cG_k^l$ as follows.
Letting $ G \| v$ denote the connected component of $v\in V(G)$  and $\sigma \| v$ the restriction of $\sigma:V(G)\to[k]$ to $G\|v$, we define
	\begin{equation}\label{eqempirical}
	\lambda\bc{G,v_1,\ldots,v_l}=\frac1{\Zkc(G)}\sum_{\sigma\in \cS_k(G)}\bigotimes_{i=1}^l\atom_{[G\|v_i,v_i,\sigma\|v_i]}\in\cP(\cG_k^l).
	\end{equation}
The idea is that $\lambda_{G,v_1,\ldots,v_l}$ captures the joint empirical distribution
of colorings induced by a random coloring of  $G$ ``locally'' in the vicinity of the ``roots'' $v_1,\ldots,v_l$.
Further, let
	$$\vec\lambda_{n,m,k}^l=\frac1{n^l}\sum_{v_1,\ldots,v_l\in[n]}\Erw[\atom_{\lambda\bc{\gnm,v_1,\ldots,v_l}}|\chi(\gnm)\leq k]\in\cP^2(\cG_k^l).$$
This measure captures the typical distribution of the local colorings in a random graph with $l$ randomly chosen roots.
We are going to determine the limit of $\vec\lambda_{n,m,k}^l$ as $n\to\infty$.

To characterise this limit, let $\T^*(d)$ be a (possibly infinite) random Galton-Watson tree rooted at a vertex $v_0^*$ with offspring distribution $\Po(d)$.
We embed $\T^*(d)$ into $\RR$ by independently mapping each vertex to a uniformly random point in $[0,1]$;
	with probability one, all vertices get mapped to distinct points.
Let $\T(d)\in\mathfrak G$ signify the resulting random tree and let $v_0$ denote its root.
For a number $\omega>0$ we let $\partial^\omega\T(d)$
denote the (finite) rooted tree obtained from $\T(d)$ by removing all vertices at a distance greater than $\omega$ from $v_0$.
Moreover, for $l\geq1$ let $\T^{1}(d),\ldots,\T^{l}(d)$ be $l$ independent copies of $\T(d)$ and set
	\begin{align}\label{eqTreeSeq}
	\vec\thet_{d,k}^l\brk\omega&=\Erw\brk{\atom_{\bigotimes_{i\in[l]}\lambda\bc{\partial^\omega\T^{i}(d)}}}\in \cP^2(\cG_k^l),
		&\mbox{where}\\
	\lambda\bc{\partial^\omega\T^{i}(d)}&=\frac1{\Zkc(\partial^\omega\T^{i}(d))}\sum_{\sigma\in\cS_k(\partial^\omega\T^{i}(d))}
		\atom_{[\partial^\omega\T^{i}(d),v_0,\sigma]}\in\cP(\cG_k^l)&\mbox{ (cf.~(\ref{eqempirical})).}
			\nonumber
	\end{align}
The sequence $(\vec\thet_{d,k}^l\brk\omega)_{\omega\geq1}$ converges (see Appendix~\ref{Sec_TreeSeq}) and we let
	$$\vec\thet_{d,k}^l=\lim_{\omega\to\infty}\vec\thet_{d,k}^l\brk\omega.$$
Combinatorially, $\vec\thet_{d,k}^l$ 
corresponds to sampling $l$ copies of the Galton-Watson tree $\T(d)$ independently.
These trees are colored by assigning a random color to each of the $l$ roots independently and proceeding down each tree
by independently choosing a color for each vertex from the $k-1$
colors left unoccupied by the parent.

\begin{theorem}\label{Thm_xlwc}
There is a number $k_0>0$ such that for all $k\geq k_0$, $d<\dc$, $l>0$ we have
	$\lim_{n\to\infty}\vec\lambda_{n,m,k}^{l}=\vec\thet_{d,k}^{l}$.
\end{theorem}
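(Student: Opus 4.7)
The plan is to exploit that the topology on $\cG_k^l$ is generated by the depth-$\omega$ truncation indicators of~(\ref{eqtopology}). By Stone--Weierstrass, continuous test functions on $\cP^2(\cG_k^l)$ are uniformly approximable by polynomials in the cylinder evaluations $\mu\mapsto\mu(\cE)$, where $\cE$ depends only on depth-$\omega$ isomorphism types. Hence the weak convergence $\vec\lambda_{n,m,k}^l\to\vec\thet_{d,k}^l$ reduces to showing, for every $l,\omega$, every tuple $(\Gamma_1^*,\ldots,\Gamma_l^*)$ of rooted $k$-colored trees of depth $\omega$, and every nonnegative integer exponents $a_1,\ldots,a_l$, that the mixed moment of the evaluations $\mu\mapsto\mu(\{\Gamma:\partial^\omega\Gamma=\Gamma_i^*\})$ under $\lambda(\G,v_1,\ldots,v_l)$, averaged over $\G$ and independent uniform roots $v_i$, converges to the product of tree-Gibbs expectations prescribed by~(\ref{eqTreeSeq}). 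Unfolding the definition of $\lambda$ in~(\ref{eqempirical}), each such moment becomes a ratio of a ``decorated'' partition function---counting tuples of proper $k$-colorings of $\G$ with prescribed local coloring around each $v_i$---to a power of $\Zkc(\G)$.

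Two standard observations simplify this ratio. First, the very definition~(\ref{eqdc}) of $\dc$ together with routine concentration for $\log\Zkc$ yields $\Zkc(\G)=\exp(o(n))\,\Erw[\Zkc(\G)]$ w.h.p.\ for $d<\dc$, so the denominator may be replaced by its expectation on the exponential scale. Second, $\G$ is locally tree-like: the joint depth-$(\omega+1)$ neighborhoods of the roots $v_1,\ldots,v_l$ form a disjoint union whose law coincides, up to $o(1)$ in total variation, with $l$ independent copies of $\partial^{\omega+1}\T(d)$. Taking these together reduces \Thm~\ref{Thm_xlwc} to the following colored claim: conditional on the neighborhoods $\partial^{\omega+1}(\G,v_i)$ being specified trees $T_1,\ldots,T_l$, the restrictions to these neighborhoods of a uniform $\SIGMA\in\cS_k(\G)$ are asymptotically independent and each uniform over the proper $k$-colorings of $T_i$.

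The crux---and the step I expect to demand the most work---is this asymptotic independence and uniformity of the local colorings; it is the analytic content of the condensation regime. The natural tool is the \emph{planted} model $\gnmp\SIGMA$ obtained by first drawing $\SIGMA\in[k]^n$ uniformly and then sampling $m$ uniform bichromatic edges. By construction the planted pair has the desired tree-Gibbs local description: the depth-$\omega$ coloring around a random vertex converges in law to the Galton--Watson process of~(\ref{eqTreeSeq}) in which each child independently picks a color from the $k-1$ non-parent ones, and bounded-depth neighborhoods of far-apart roots are asymptotically independent. The joint laws of $(\gnm,\SIGMA)$ under the Gibbs and the planted constructions are mutually absolutely continuous with Radon--Nikodym derivative proportional, up to subexponential factors, to $\Zkc(\gnm)/\Erw[\Zkc(\gnm)]$, which by the first observation is $\exp(-o(n))$-bounded below w.h.p. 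Consequently any event of planted probability tending to one has Gibbs probability tending to one, transferring the explicit planted local law to $\SIGMA\mid\gnm$. A small-subgraph correction absorbing the bounded-length cycles---whose counts have a Poisson limit---closes the match with~(\ref{eqTreeSeq}). The restriction $k\geq k_0$ enters exactly at this transfer step, since the large-$k$ asymptotics underlying~(\ref{eqdk}) are needed both to control the error terms and to rule out short-range correlations between the boundary colors of widely separated local trees.
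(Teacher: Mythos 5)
Your outline correctly identifies the general shape of the argument (reduce to convergence of bounded-depth statistics, use a planting argument plus contiguity), but at the crucial transfer step it parts ways with what the paper does, and as written it has a gap that the paper was designed precisely to circumvent.

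The central problem is that you invoke the \emph{single} planted model and contiguity, and then assert that this transfers ``the explicit planted local law to $\SIGMA\mid\gnm$.'' That inference is not valid. Contiguity of the Gibbs pair $(\G,\SIGMA)$ with the single planted pair only controls statements about the \emph{joint} law, i.e.\ it gives you that the average over $\G$ of the Gibbs marginal $\bck{\vecone\cbc{\partial^\omega(\G,v,\SIGMA)\ism(\theta,\tau)}}_\G$ is close to the tree value $\Zkc(\theta)^{-1}\pr[\partial^\omega\T(d)\ism\theta]$. But $\vec\thet^l_{d,k}$ is not a point mass and $\vec\lambda^l_{n,m,k}$ is a law on $\cP(\cG_k^l)$; to match them you must show that, with high probability over $\G$ and $v_1,\ldots,v_l$, the \emph{conditional} marginal $\lambda(\G,v_1,\ldots,v_l)$ itself is close to $\bigotimes_i\mathrm{Unif}(\cS_k(\partial^\omega(\G,v_i)))$. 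That is a concentration statement, and controlling the fluctuations of $\bck{\cdot}_\G$ around its mean is a second-moment computation, which requires sampling \emph{two} colorings $\SIGMA_1,\SIGMA_2$ from the same graph. That is exactly why the paper introduces the planted \emph{replica} model $\plp$ in \Sec~\ref{Sec_planting} and proves \Prop~\ref{Thm_contig} for the pair $(\G,\SIGMA_1,\SIGMA_2)$; the quantity $Q_i(v)=\vecone\cbc{\cdots}\bck{(t_i(v,\SIGMA_1)-z_i^{-1})(t_i(v,\SIGMA_2)-z_i^{-1})}_\G$ in \Lem~\ref{Lemma_replicas} is precisely the variance you need to kill, and it involves both replicas irreducibly. (The paper even flags this explicitly in the ``Techniques and outline'' discussion, citing~\cite{BST} on why the single-coloring planting trick is insufficient here.)

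A second, related issue is the strength of the concentration of $\Zkc(\G)$ that you assume. You write that the definition of $\dc$ plus ``routine concentration for $\log\Zkc$'' gives $\Zkc(\G)=\exp(o(n))\Erw[\Zkc(\G)]$ w.h.p.\ and that this suffices ``on the exponential scale.'' It does not suffice. In the contiguity argument (cf.\ the proof of \Prop~\ref{Thm_contig}), one bounds $\Erw[\Zkc(\G)^2\vecone\cbc{\cA_n}]$ from above by $\bar z^2\exp(O(\omega))\plp[\cA_n]$ and from below by $(\eps/2)(\bar z/\omega)^2$, and the contradiction requires choosing $\omega=\omega(n)$ growing arbitrarily slowly (here $\omega\sim\ln\ln(1/\plp[\cA_n])$). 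The planted probabilities $\plp[\cA_n]$ that arise from the local analysis (see \Lem~\ref{Lemma_conc}) decay only like $\exp(-\Omega(\ln^2 n))$, which an $\exp(o(n))$ correction factor swamps. What is actually needed is the much stronger concentration from~\cite{Silent}, stated as \Thm~\ref{Thm_Z}: $|\ln\Zkc(\G)-\ln\Erw[\Zkc(\G)]|\leq\omega$ w.h.p.\ for \emph{any} $\omega\to\infty$. This is a nontrivial input, not a consequence of the definition of $\dc$ alone.

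Finally, once you move to the replica model, establishing contiguity requires controlling the overlap $\rho(\SIGMA_1,\SIGMA_2)$ of two independent colorings of $\G$ across the entire regime $d<\dc$. For $d<2(k-1)\ln(k-1)$ the second-moment analysis of~\cite{AchNaor} suffices, but to reach all the way to $\dc$ you need the separability machinery of~\cite{Danny} and the cluster-size bound of~\cite{Cond} (\Lem s~\ref{Lemma_Danny} and~\ref{Lemma_clusterSize}, combined in \Lem~\ref{Lemma_Z2}). Your proposal does not address this, and it is where the restriction $k\geq k_0$ genuinely enters, rather than in ``controlling error terms'' as you suggest.
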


Fix numbers $\omega\geq1$, $l\geq1$, choose a random graph $\G=\gnm$ for some large enough $n$  and choose vertices 
$\vec v_1,\ldots,\vec v_l$ uniformly and independently at random.
Then the depth-$\omega$ neighborhoods $\partial^\omega(\G,\vec v_1),\ldots,\partial^\omega(\G,\vec  v_l)$ 
are pairwise disjoint and the union $\cF=\partial^\omega(\G,\vec v_1)\cup\cdots\cup\partial^\omega(\G,\vec v_l)$ is a forest \whp\
Moreover, the distance between any two trees in $\cF$ is $\Omega(\ln n)$ \whp\
Given that $\G$ is $k$-colorable, let $\SIGMA$ be a random $k$-coloring of $\G$.
Then $\SIGMA$ induces a $k$-coloring of the forest $\cF$.
\Thm~\ref{Thm_xlwc} implies that \whp\ the distribution of the induced  coloring is at a total variation distance $o(1)$
from the uniform distribution on the set of all $k$-colorings of $\cF$.
Formally, let us write $\mu_{k,G}$ for the probability distribution on $[k]^{V(G)}$ defined by
	\begin{align*}
	\mu_{k,G}(\sigma)&=\vecone\cbc{\sigma\in\cS_k(G)}\Zkc(G)^{-1}&(\sigma\in[k]^{V(G)}),
	\end{align*}
i.e., the uniform distribution on the set of $k$-colorings of the graph $G$.
Moreover, for $U\subset V(G)$ let
	$\mu_{k,G|U}$ denote the projection of $\mu_{k,G}$ onto $[k]^U$, i.e.,
	\begin{align*}
	\mu_{k,G|U}(\sigma_0)&=\mu_{k,G}\bc{\cbc{\sigma\in[k]^V:\forall u\in U:\sigma(u)=\sigma_0(u)}}&(\sigma_0\in[k]^U).
	\end{align*}
If $H$ is a subgraph of $G$, then we just write $\mu_{k,G|H}$ instead of $\mu_{k,G|V(H)}$.
Let $\TV\nix$ denote the total variation norm.

\begin{corollary}\label{Cor_xlwc}
There is a constant $k_0>0$ such that
for any $k\geq k_0$, $d<\dc$, $l\geq1$, $\omega\geq0$ we have
	\begin{align*}
	\lim_{n\to\infty}\frac1{n^l}\sum_{v_1,\ldots,v_l\in[n]}
		\Erw\TV{\mu_{k,\G|\partial^\omega(\G,v_1)\cup\cdots\cup\partial^\omega(\G,v_l)}
			-\mu_{k,\partial^\omega(\G,v_1)\cup\cdots\cup\partial^\omega(\G,v_l)}}=0.
	\end{align*}
\end{corollary}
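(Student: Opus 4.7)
The plan is to derive Corollary~\ref{Cor_xlwc} from Theorem~\ref{Thm_xlwc} in three steps: (i) identify the depth-$\omega$ projection of the empirical measure $\lambda(\G,v_1,\ldots,v_l)$ with $\mu_{k,\G|\cF}$; (ii) verify that the reference measure $\mu_{k,\cF}$ shares the same local weak limit; (iii) upgrade the resulting $\cP^2$-convergence to total-variation convergence via a size-truncation argument.

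First I would argue that with probability $1-o(1)$ over $\G$ and uniformly random $v_1,\ldots,v_l\in[n]$, the depth-$\omega$ neighbourhoods $\partial^\omega(\G,v_i)$ are pairwise vertex-disjoint trees; this is a routine Benjamini--Schramm-type estimate for $\gnm$ at bounded average degree. On the complementary event the TV distance is trivially bounded by $1$ and contributes $o(1)$ to the average, so one may restrict attention to the good event. On that event, letting $\cF=\partial^\omega(\G,v_1)\cup\cdots\cup\partial^\omega(\G,v_l)$, the reference measure factorises as $\mu_{k,\cF}=\bigotimes_{i=1}^{l}\mu_{k,\partial^\omega(\G,v_i)}$, and on each tree $T$ the uniform coloring $\mu_{k,T}$ coincides with the broadcast process: uniform colour at the root, each child independently uniform among the $k-1$ non-parent colours. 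In particular, the limiting measure $\bigotimes_i\mu_{k,\partial^\omega\T^i(d)}$ appearing in~(\ref{eqTreeSeq}) is precisely this broadcast process on $l$ independent Galton--Watson trees.

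Next I would apply the continuous projection $(\partial^\omega)^{\otimes l}$ (continuous by the very definition~(\ref{eqtopology})) to both sides of Theorem~\ref{Thm_xlwc}. Under the canonical isomorphism identification, the pushforward of $\lambda(\G,v_1,\ldots,v_l)$ is precisely $\mu_{k,\G|\cF}$ viewed as an element of $\cP(\cG_k^l)$, so the theorem yields convergence of the law of $\mu_{k,\G|\cF}$ in $\cP^2(\cG_k^l)$ to the law of $\bigotimes_i\mu_{k,\partial^\omega\T^i(d)}$. Separately, the uncoloured forest $\cF$ converges in law to $l$ independent depth-$\omega$ Galton--Watson trees by classical Benjamini--Schramm, and since depth-$\omega$ isomorphism classes form a discrete subspace the map $\cF\mapsto\mu_{k,\cF}$ is continuous; hence the law of $\mu_{k,\cF}$ converges to the very same limit in $\cP^2(\cG_k^l)$. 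Because $\mu_{k,\cF}$ is moreover a \emph{deterministic} function of $\cF$, the continuous-mapping theorem applied to the jointly-convergent variable $(\cF,\mu_{k,\G|\cF})$ forces $\mu_{k,\G|\cF}-\mu_{k,\cF}\to0$ in probability, in the weak topology on $\cP(\cG_k^l)$.

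Finally, I would pass from weak to TV convergence through truncation in neighbourhood size. For $M\geq1$ let $E_M=\cbc{|\partial^\omega(\G,v_i)|\leq M\text{ for all }i}$; standard Poisson-tail estimates on the GW branching give $\pr[\neg E_M]\leq\eta(M)$ uniformly in $n$, with $\eta(M)\to0$ as $M\to\infty$. On $E_M$ both measures live on the finite set $[k]^{V(\cF)}$ of cardinality at most $k^{lM}$, where the weak and TV topologies coincide on the finite-dimensional probability simplex. Hence the convergence in probability from the previous paragraph implies $\EX[\TV{\mu_{k,\G|\cF}-\mu_{k,\cF}}\mathbbm{1}_{E_M}]\to0$ for each fixed $M$, while the contribution of $\neg E_M$ is at most $\eta(M)$; sending $n\to\infty$ and then $M\to\infty$ yields the corollary. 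The main obstacle is step~(ii): Theorem~\ref{Thm_xlwc} must be read as encoding \emph{joint} convergence of the coloured and uncoloured neighbourhood structure, so that $\mu_{k,\G|\cF}$ and $\mu_{k,\cF}$ truly hug each other in the limit rather than merely converging separately to the same marginal---this is precisely where the specific form of the topology~(\ref{eqtopology}) comes into play.
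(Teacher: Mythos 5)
Your route through \Thm~\ref{Thm_xlwc} is the one the authors flag as feasible (``while it is not difficult to derive \Cor~\ref{Cor_xlwc} from \Thm~\ref{Thm_xlwc}'') but choose not to carry out, deriving \Cor~\ref{Cor_xlwc} instead directly from \Prop~\ref{Prop_replicas}. Since \Prop~\ref{Prop_replicas} and \Cor~\ref{Cor_replicas} also underlie the proof of \Thm~\ref{Thm_xlwc}, the two routes rest on the same engine; yours adds a detour through the $\cP^2$ machinery that costs topological bookkeeping your write-up leaves implicit. Concretely, the passage from ``both converge to the same limit'' to ``they converge to each other'' is only legitimate because the uncolored shape of $\cF$ can be read off from the support of $\mu_{k,\G|\cF}$ --- a continuous operation, since depth-$\omega$ cylinder events are clopen in $\cG_k^l$ --- so that \Thm~\ref{Thm_xlwc} encodes \emph{joint} convergence of $(\mu_{k,\G|\cF},\mu_{k,\cF})$ to a law concentrated on the diagonal; you allude to this but do not quite nail it down. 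Likewise, the weak-to-TV upgrade should be phrased as: on $E_M$ the total variation equals the restricted $\ell_1$ sum $\tfrac12\sum_{\Gamma}\abs{\mu(\{\Gamma\})-\nu(\{\Gamma\})}$ over the finitely many isomorphism classes $\Gamma$ of $l$-tuples of depth-$\omega$ trees with at most $M$ vertices, and this sum is a bounded continuous function vanishing on the diagonal (again because singletons are clopen), so its expectation tends to $0$. The paper's preferred derivation avoids this entirely: \Cor~\ref{Cor_replicas} already gives that for any fixed pattern $(\theta_i,\tau_i)_{i\in[l]}$ and all but $o(n^l)$ vertex tuples, $\bck{\prod_i\vecone\{\partial^\omega(\G,v_i,\SIGMA)\ism(\theta_i,\tau_i)\}}_{\G}$ lies within $\delta$ of $\prod_i\Zkc(\theta_i)^{-1}$, which is exactly $\mu_{k,\cF}$ evaluated at that pattern; summing over the finitely many patterns with all $\abs{\theta_i}\le M$ and bounding the remaining TV mass by the neighbourhood-size tail produces the corollary with no appeal to the weak topology.
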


Since  \whp\ the pairwise distance of $l$ randomly chosen vertices $v_1,\ldots,v_l$ in $\G$ is $\Omega(\ln n)$,
	we observe that \whp
	$$\mu_{k,\partial^\omega(\G,v_1)\cup\cdots\cup\partial^\omega(\G,v_l)}=\bigotimes_{i\in[l]}\mu_{k,\partial^\omega(\G,v_i)}.$$
With very little work it can be verified that \Cor~\ref{Cor_xlwc} is actually equivalent to \Thm~\ref{Thm_xlwc}.
Setting $\omega=0$ in \Cor~\ref{Cor_xlwc} yields the following statement, which is of interest in its own right.

\begin{corollary}\label{Cor_decay}
There is a number $k_0>0$ such that for all $k\geq k_0$, $d<\dc$ and any integer $l>0$ we have
	\begin{equation}\label{eqCor_decay}
	\lim_{n\to\infty}\frac1{n^l}\sum_{v_1,\ldots,v_l\in[n]}
			\Erw\TV{\mu_{k,\G|\cbc{v_1,\ldots,v_l}}-\bigotimes_{i\in\brk l}\mu_{k,\G|\cbc{v_i}}}=0.
	\end{equation}
\end{corollary}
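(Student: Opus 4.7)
The plan is to deduce \Cor~\ref{Cor_decay} from \Cor~\ref{Cor_xlwc} by combining the specializations to $\omega=0$ and to $l=1$. For $\omega=0$ the depth-$0$ neighborhood $\partial^0(\G,v_i)$ is simply the isolated vertex $v_i$ (with no edges), so whenever $v_1,\ldots,v_l$ are pairwise distinct the union $H=\partial^0(\G,v_1)\cup\cdots\cup\partial^0(\G,v_l)$ is the edgeless graph on $\cbc{v_1,\ldots,v_l}$, and its uniform coloring distribution $\mu_{k,H}$ factorizes as the $l$-fold product of the uniform distribution on $[k]$. Since $l$ is fixed, tuples with coinciding entries contribute $O(l^2/n)=o(1)$ to the averaged total variation and can be discarded. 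Hence \Cor~\ref{Cor_xlwc} specialized to $\omega=0$ reads
\begin{equation}\label{eqstarplan}
\lim_{n\to\infty}\frac1{n^l}\sum_{v_1,\ldots,v_l\in[n]}\Erw\TV{\mu_{k,\G|\cbc{v_1,\ldots,v_l}}-\bigotimes_{i\in\brk l}\mu_{k,\cbc{v_i}}}=0,
\end{equation}
where $\mu_{k,\cbc{v_i}}$ is shorthand for the uniform distribution on $[k]$.

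The remaining task is to replace the flat uniform product on the right of \eqref{eqstarplan} by the product $\bigotimes_i\mu_{k,\G|\cbc{v_i}}$ of the actual single-vertex marginals. For this I would invoke \Cor~\ref{Cor_xlwc} once more, this time at $l=1$ and $\omega=0$, which yields $n^{-1}\sum_v\Erw\TV{\mu_{k,\G|\cbc v}-\mu_{k,\cbc v}}=o(1)$, i.e.~the color at a typical vertex of a typical $\G$-coloring is asymptotically uniform on $[k]$. A triangle inequality combined with the standard tensorization bound $\TV{\bigotimes_i\mu_i-\bigotimes_i\nu_i}\leq\sum_i\TV{\mu_i-\nu_i}$ then gives
\begin{align*}
&\TV{\mu_{k,\G|\cbc{v_1,\ldots,v_l}}-\bigotimes_{i\in\brk l}\mu_{k,\G|\cbc{v_i}}}\\
&\qquad\leq\TV{\mu_{k,\G|\cbc{v_1,\ldots,v_l}}-\bigotimes_{i\in\brk l}\mu_{k,\cbc{v_i}}}+\sum_{i\in\brk l}\TV{\mu_{k,\cbc{v_i}}-\mu_{k,\G|\cbc{v_i}}}.
\end{align*}
Averaging over $(v_1,\ldots,v_l)\in[n]^l$ and taking expectations over $\G$, the first summand vanishes by \eqref{eqstarplan}, while by symmetry each of the $l$ terms in the sum averages to the left-hand side of the $l=1$ identity above and is therefore $o(1)$.

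No step presents a genuine obstacle: all substantive content is carried by \Thm~\ref{Thm_xlwc}, and \Cor~\ref{Cor_decay} is essentially the ``$\omega=0$'' facet of the local weak convergence statement, combined with a cheap symmetrization exchanging the joint single-vertex marginal for the product of single-vertex marginals. The only bookkeeping items are controlling tuples with coinciding entries (immediate since $\TV\nix\leq 1$ and such tuples have density $O(1/n)$ in $[n]^l$) and the tensorization/triangle-inequality manipulation displayed above.
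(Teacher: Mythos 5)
Your argument is correct and follows essentially the same route as the paper: specialize \Cor~\ref{Cor_xlwc} to $\omega=0$, identify $\partial^0(\G,v_1)\cup\cdots\cup\partial^0(\G,v_l)$ with the edgeless graph on $\cbc{v_1,\ldots,v_l}$ so that its uniform coloring measure is the flat product, and discard the $O(l^2/n)$ fraction of tuples with coinciding entries. One small simplification you missed: the second invocation of \Cor~\ref{Cor_xlwc} (at $l=1$, $\omega=0$) is unnecessary, since by permutation symmetry of the colors $\mu_{k,\G|\cbc{v}}$ is \emph{exactly} the uniform distribution on $\brk k$ for every $k$-colorable graph and every vertex, not merely asymptotically so — the paper records this observation immediately after stating \Cor~\ref{Cor_decay} — so the triangle-inequality/tensorization step collapses to an identity.
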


By the symmetry of the colors, $\mu_{k,\G|\cbc v}$ is just the uniform distribution on $\brk k$ for every vertex $v$.
Hence, \Cor~\ref{Cor_decay} states that for $d<\dc$ \whp\ in the random graph $\G$ for randomly chosen
vertices $\vec v_1,\ldots,\vec v_l$ the following is true:
if we choose a $k$-coloring $\SIGMA$ of $\G$ at random, then  $(\SIGMA(\vec v_1),\ldots,\SIGMA(\vec v_l))\in[k]^l$
is asymptotically uniformly distributed.
Prior results of Montanari and Gershenfeld~\cite{GM} and of Montanari, Restrepo and Tetali~\cite{Prasad}
imply that (\ref{eqCor_decay}) holds for $d<2(k-1)\ln(k-1)$, about an additive $\ln k$ below $\dc$.

The above results and their proofs are inspired by ideas from statistical physics.
More specifically, physicists have developed a 
non-rigorous but analytic technique, the so-called ``cavity method''~\cite{MM}, which has led to various conjectures on the random graph coloring problem.
These include a prediction as to the precise value of $\dc$ for any $k\geq3$~\cite{LenkaFlorent} as well as
a conjecture as to the precise value of the $k$-colorability threshold $\dk$~\cite{KPW}.
While the latter formula is  complicated, asymptotically we expect that $\dk=(2k-1)\ln k-1+\eps_k$, where $\lim_{k\to\infty}\eps_k=0$.
According to this conjecture, the upper bound in~(\ref{eqdk}) is asymptotically tight and $\dk$ is strictly greater than $\dc$.
Furthermore, according to the physics considerations (\ref{eqCor_decay}) holds for any $k\geq3$ and any $d<\dc$~\cite{pnas}.
\Cor~\ref{Cor_decay} verifies this conjecture for $k\geq k_0$.
By contrast, according to the physics predictions, (\ref{eqCor_decay}) does {\em not} hold for $\dc<d<\dk$.
As (\ref{eqCor_decay}) is the special case of $\omega=0$ of \Thm~\ref{Thm_xlwc} (resp.\ \Cor~\ref{Cor_xlwc}), the conjecture implies
that neither of these extend to $d>\dc$.
In other words, the physics picture suggests that \Thm~\ref{Thm_xlwc}, \Cor~\ref{Cor_xlwc} and \Cor~\ref{Cor_decay}
are \emph{optimal}, except that the assumption $k\geq k_0$ can possibly be replaced by $k\geq3$.

\subsection{An application}
Suppose we draw a $k$-coloring $\SIGMA$ of $\G$ at random.
Of course, the colors that $\SIGMA$ assigns to the neighbors of a vertex $v$ and the color of $v$ are correlated
	(they must be distinct).
More generally, it seems reasonable to expect that for any {\em fixed} ``radius'' $\omega$ the colors assigned to the
vertices at distance $\omega$ from $v$ and the color of $v$ itself will typically be correlated.
But will these correlations persist as $\omega\to\infty$?
This is the  ``reconstruction problem'', which has received considerable attention in the context
of random constraint satisfaction problems in general and in random graph coloring in particular~\cite{pnas,Prasad,SlyReconstruction}.
To illustrate the use of \Thm~\ref{Thm_xlwc} we will show how it readily implies the result on the reconstruction
problem for random graph coloring from~\cite{Prasad}.

To formally state the problem, assume that $G$ is a finite $k$-colorable graph.
For $v\in V(G)$ and a subset $\emptyset\neq\cR\subset\cS_k(G)$ let $\mu_{k,G|v}(\nix|\cU)$ be the probability distribution on $\brk k$ defined by
	\begin{align*}
	\mu_{k,G|v}(i|\cR)&=\frac1{\abs\cR}\sum_{\sigma\in\cR}\vecone\cbc{\sigma(v)=i},
	\end{align*}
i.e., the distribution of the color of $v$ in a random coloring $\sigma\in\cR$.
For $v\in V(G)$, $\omega\geq1$ and $\sigma_0\in\cS_k(G)$ let
	$$\cR_{k,G}(v,\omega,\sigma_0)=\cbc{\sigma\in\cS_k(G):\forall u\in V(G)\setminus\partial^{\omega-1}(G,v):\sigma(u)=\sigma_0(u)}.$$
Thus, $\cR_{k,G}(v,\omega,\sigma_0)$ contains all $k$-colorings that coincide with $\sigma_0$ on vertices whose distance from $v$ is 
	{\em at least} $\omega$.
Moreover, let
	\begin{align*}
	\corr_{k,G}(v,\omega,\sigma_0)&=\frac12\sum_{i\in [k]}\left| \mu_{k,G|v}(i|\cR_{k,G}(v,\omega,\sigma_0))
		-\frac1k\right |, &
			\corr_{k,G}(v,\omega)&=\frac1{\Zkc(G)}\sum_{\sigma_0\in\cS_k(G)}\corr_{k,G}(v,\omega,\sigma_0).
	\end{align*}
Clearly, for symmetry reasons, if we draw a $k$-coloring $\SIGMA\in\cS_k(G)$ uniformly at random,
then $\SIGMA(v)$ is uniformly distributed over $\brk k$.
What $\corr_{k,G}(v,\omega,\sigma_0)$ measures is how much conditioning on the event  $\SIGMA\in\cR_{k,G}(v,\omega,\sigma_0)$ biases the color of $v$.
Accordingly, $\corr_{k,G}(v,\omega)$  measures the bias induced by a {\em random} ``boundary condition'' $\sigma_0$.
We say that \emph{non-reconstruction} occurs in $\gnm$ if 
	$$\lim_{\omega\to\infty}\lim_{n\to\infty}\frac1n\sum_{v\in[n]}\Erw[\corr_{k,\gnm}(v, \omega)]=0.$$
Otherwise, \emph{reconstruction} occurs.
Analogously, 
recalling that $\vec T(d)$ is the Galton-Watson tree rooted at $v_0$, we say that
\emph{tree non-reconstruction} occurs at $d$ if
	$\lim_{\omega\to\infty}\Erw[\corr_{k,\partial^\omega\vec T(d)}(v_0, \omega )]=0.$
Otherwise, \emph{tree reconstruction} occurs.

\begin{corollary}\label{Cor_reconstr}
There is a number $k_0>0$ such that for all $k\geq k_0$ and  $d<\dc$ the following is true.
	\begin{equation}\label{eqCor_reconstr}
	\parbox{12cm}{Reconstruction occurs in $\gnm$ $\Leftrightarrow$ tree reconstruction occurs at $d$.}
	\end{equation}
\end{corollary}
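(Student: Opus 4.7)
The plan is to recast $\Erw[\corr_{k,\gnm}(v,\omega)]$ as the integral of a bounded continuous functional against the local empirical measure $\lambda(\gnm,v)\in\cP(\cG_k)$, and then to deduce the desired limit from \Thm~\ref{Thm_xlwc} with $l=1$.

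The starting point is a Markov property of the uniform coloring measure that holds for any finite $k$-colorable graph $G$. Every edge with exactly one endpoint in $\partial^{\omega-1}(G,v)$ has its other endpoint at distance exactly $\omega$ from $v$. Hence, under $\mu_{k,G}$, conditioning on $\SIGMA|_{V(G)\setminus\partial^{\omega-1}(G,v)}$ renders $\SIGMA|_{\partial^{\omega-1}(G,v)}$ uniform over colorings of the induced subgraph $\partial^\omega(G,v)$ that are consistent with the boundary values at distance exactly $\omega$. Consequently $\corr_{k,G}(v,\omega,\sigma_0)$ depends on $(G,\sigma_0)$ only through $\partial^\omega(G,v)$ and the values of $\sigma_0$ on its outermost vertices, and averaging over $\sigma_0\in\cS_k(G)$ produces a bounded map $h_\omega:\cG_k\to[0,1]$ that factors through the depth-$\omega$ truncation $\Gamma\mapsto\partial^\omega\Gamma$ and satisfies
\[
\corr_{k,G}(v,\omega)\;=\;\int h_\omega\,d\lambda(G,v).
\]
By the definition of the topology in~\eqref{eqtopology}, $h_\omega$ is continuous, and hence the linear functional $F_\omega:\mu\mapsto\int h_\omega\,d\mu$ is bounded and continuous on $\cP(\cG_k)$.

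Next I apply \Thm~\ref{Thm_xlwc} with $l=1$: since $\vec\lambda_{n,m,k}^1\to\vec\thet_{d,k}^1$ weakly on $\cP^2(\cG_k)$, testing against $F_\omega$ yields
\[
\lim_{n\to\infty}\frac1n\sum_{v\in[n]}\Erw\brk{\corr_{k,\gnm}(v,\omega)\mid\chi(\gnm)\leq k}
\;=\;\Erw\brk{\corr_{k,\partial^\omega\T(d)}(v_0,\omega)}.
\]
For the right-hand side I use that for any $\omega'\geq\omega$ the broadcast measure on $\partial^{\omega'}\T(d)$ restricts on $\partial^{\omega}\T(d)$ to the broadcast measure featuring in $\corr_{k,\partial^\omega\T(d)}(v_0,\omega)$, so the value of $F_\omega$ on $\vec\thet_{d,k}^1\brk{\omega'}$ stabilises at $\omega'=\omega$ and passes to the limit. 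Because $d<\dc\leq\liminf_{n\to\infty}\dk(n)$ by~\eqref{eqdk}, we have $\pr\brk{\chi(\gnm)\leq k}\to 1$, so the conditioning can be removed with $o(1)$ error.

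Taking $\omega\to\infty$ on both sides of the displayed identity, reconstruction in $\gnm$ (failure of the left-hand side to vanish) is equivalent to tree reconstruction at $d$ (failure of the right-hand side to vanish), which is precisely~\eqref{eqCor_reconstr}. The delicate point I anticipate is the rigorous implementation of the Markov property in the presence of short cycles inside $\partial^\omega(\gnm,v)$; the argument above avoids any tree-likeness assumption and relies solely on the fact that the set $\cbc{u:\dist(u,v)=\omega}$ separates $v$ from the remainder of $G$ in the conditional coloring measure. With that in hand, everything else follows from \Thm~\ref{Thm_xlwc} together with the continuity of $F_\omega$.
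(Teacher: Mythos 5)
Your proof is correct and follows essentially the same route as the paper's: the key observation in both is that $\corr_{k,G}(v,\omega,\sigma_0)$ depends on $(G,\sigma_0)$ only through $\partial^\omega(G,v)$ and the boundary values of $\sigma_0$, so that the $l=1$ case of \Thm~\ref{Thm_xlwc}/\Cor~\ref{Cor_xlwc} applies. The paper carries out the comparison in two explicit steps (first replacing $\corr_{k,\gnm}(v,\omega)$ by $\corr_{k,\partial^\omega\gnm}(v,\omega)$ via \Cor~\ref{Cor_xlwc}, then coupling $\partial^\omega\gnm$ with $\partial^\omega\T(d)$ as in \Lem~\ref{Lemma_bin_0}), whereas you bundle both steps into a single test against the bounded continuous functional $F_\omega$; this is the same argument, slightly more abstractly packaged.
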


Montanari, Restrepo and Tetali~\cite{Prasad} proved~(\ref{eqCor_reconstr}) for $d<2(k-1)\ln(k-1)$, about an additive $\ln k$ below $\dc$.
This gap could be plugged by invoking recent results on the geometry of the set of $k$-colorings~\cite{Silent,Covers,Molloy}.
However, we shall see that \Cor~\ref{Cor_reconstr} is actually an immediate consequence of \Thm~\ref{Thm_xlwc}.

The point of \Cor~\ref{Cor_reconstr} is that it reduces the reconstruction problem on a combinatorially extremely 
intricate  object, namely the random graph $\gnm$, to the same problem on a much simpler  structure,
 	namely the Galton-Watson tree $\T(d)$.
That said, the reconstruction problem on $\T(d)$ is far from trivial.
The best current bounds show that there exists a sequence $(\delta_k)_k\to 0$ such that non-reconstruction holds in $\T(d)$ if $d<(1-\delta_k)k\ln k$ while 
reconstruction occurs if $d>(1+\delta_k)k\ln k$~\cite{GWReconstruction}.

\subsection{Techniques and outline}
None of the arguments in the present paper are particularly difficult.
It is rather that a combination of several relatively simple ingredients proves remarkably powerful.
The starting point of the proof is a recent result~\cite{Silent} on the concentration of the number $\Zkc(\gnm)$ of $k$-colorings of $\gnm$.
This result entails a very precise connection between a fairly simple probability distribution, the so-called ``planted model'', and the experiment
of sampling a random coloring of a random graph, thereby extending the ``planting trick'' from~\cite{Barriers}.
However, this planting argument is not powerful enough to establish \Thm~\ref{Thm_xlwc} (cf.\ also the discussion in~\cite{BST}).
Therefore, in the present paper the key idea is to use the information about $\Zkc(\gnm)$ to introduce an enhanced variant of the planting trick.
More specifically, in \Sec~\ref{Sec_planting} we will establish a connection between the experiment of sampling a random {\em pair} of colorings of $\gnm$
and another, much simpler probability distribution that we call the {\em planted replica model}.
We expect that this idea will find future uses.

Apart from the concentration of $\Zkc(\gnm)$, this connection also hinges on a study of the ``overlap'' of two randomly chosen colorings of $\gnm$.
The overlap was studied in prior work on reconstruction~\cite{GM,Prasad} in the case that $d<2(k-1)\ln(k-1)$ based on considerations
from the second moment argument of Achlioptas and Naor~\cite{AchNaor} that gave the best lower bound on the $k$-colorability threshold at the time.
To extend the study of the overlap to the whole range $d\in(0,\dc)$, we crucially harness insights from the improved second moment
argument from~\cite{Danny} and the rigorous derivation of the condensation threshold~\cite{Cond}.

As we will see in \Sec~\ref{Sec_Nor},
the study of the planted replica model allows us to draw conclusions as to the typical ``local'' structure of pairs of random colorings of $\gnm$.
To turn these insights into a proof of \Thm~\ref{Thm_xlwc}, in \Sec~\ref{Sec_lwc} we extend an elegant argument from~\cite{GM}, which was used there to
establish the asymptotic independence of the colors assigned to a bounded number of randomly chosen individual vertices (reminiscent of~(\ref{eqCor_decay}))
for $d<2(k-1)\ln(k-1)$.

The bottom line is that the strategy behind the proof of \Thm~\ref{Thm_xlwc} is rather generic.
It probably extends to other problems of a similar nature.
A natural class to think of are the binary problems studied in~\cite{Prasad}. 
Another candidate might be the hardcore model, which was studied in~\cite{BST} by a somewhat different approach.

\section{Preliminaries}

\subsection{Notation}
For a finite or countable set $\cX$ we denote by $\cP(\cX)$ the set of all probability distributions on $\cX$,
which we identify with the set of all maps $p:\cX\to[0,1]$ such that $\sum_{x\in\cX}p(x)=1$.
Furthermore, if $N>0$ is an integer, then $\cP_N(\cX)$ is the set of all $p\in\cP(\cX)$ such that $Np(x)$ is an integer for every $x\in\cX$.
With the convention that $0\ln0=0$, we denote the entropy of $p\in\cP(\cX)$ by
	$$H(p)=-\sum_{x\in\cX}p(x)\ln p(x).$$

Let $G$ be a $k$-colorable graph.
By $\SIGMA^{k,G},\SIGMA^{k,G}_1,\SIGMA^{k,G}_2,\ldots\in\cS_k(G)$ we denote independent uniform samples from $\cS_k(G)$.
Where $G,k$ are apparent from the context, we omit the superscript.
Moreover, if  $X:\cS_k(G)\to\RR$, we write
	$$\bck{X(\SIGMA)}_{G,k}=\frac1{\Zkc(G)}\sum_{\sigma\in\cS_k(G)}X(\sigma).$$
More generally, if $X:\cS_k(G)^l\to\RR$, then
	$$\bck{X(\SIGMA_1,\ldots,\SIGMA_l)}_{G,k}=\frac1{\Zkc(G)^l}\sum_{\sigma_1,\ldots,\sigma_l\in\cS_k(G)}X(\sigma_1,\ldots,\sigma_l).$$
We omit the subscript $G$ and/or $k$ where it is apparent from the context.

Thus, the symbol $\bck\nix_{G,k}$ refers to the average over randomly chosen $k$-colorings of a {\em fixed} graph $G$.
By contrast, the standard notation $\Erw\brk\nix$, $\pr\brk\nix$ will be used to indictate that the expectation/probability is taken
over the choice of the random graph $G(n,m)$.
Unless specified otherwise, we use the standard $O$-notation to refer to the limit $n\to\infty$.
Throughout the paper, we tacitly assume that $n$ is sufficiently large for our various estimates to hold.

By a {\em rooted graph} we mean a graph $G$ together with a distinguished vertex $v$, the {\em root}.
The vertex set is always assumed to be a subset of $\RR$.
If $\omega\geq0$ is an integer, then $\nbg{G}{v}$ signifies the subgraph of $G$ obtained by removing all
vertices at distance greater than $\omega$ from $v$ (including those vertices of $G$ that are not reachable from $v$), rooted at $v$.
An {\em isomorphism} between two rooted graphs $(G,v)$, $(G',v')$ is an isomorphism $G\to G'$ of the underlying graphs
that maps $v$ to $v'$ and that preserves the order of the vertices (which is why we insist that they be reals).

\subsection{The first moment}\label{Sec_firstMoment}
The present work builds upon results on the first two moments of $\Zkc(\gnm)$.

\begin{lemma}\label{Lemma_firstMoment}
For any $d>0$, 
$\Erw[\Zkc(\G)]=\Theta(k^n(1-1/k)^m).$
\end{lemma}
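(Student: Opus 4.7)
The plan is to compute $\Erw[\Zkc(\G)]$ directly: by linearity, group colorings by class-size vector, and then apply Laplace's method around the balanced partition.

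First, for any fixed map $\sigma:[n]\to[k]$ with color class sizes $(n_1,\ldots,n_k)$, the map $\sigma$ is a proper coloring of $\G=\gnm$ iff each of the $m$ random edges misses the $F:=\sum_{i}\binom{n_i}{2}$ monochromatic pairs among $N:=\binom{n}{2}$ total pairs, so
$$\Pr[\sigma\in\cS_k(\G)]=\binom{N-F}{m}\bigg/\binom{N}{m}.$$
Grouping by class-size vector yields
$$\Erw[\Zkc(\G)]=\binom{N}{m}^{-1}\sum_{n_1+\cdots+n_k=n}\binom{n}{n_1,\ldots,n_k}\binom{N-F}{m}.$$

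For the upper bound, the elementary inequality $(M-j)/(N-j)\leq M/N$ (for $M\leq N$, $0\leq j<m$) gives $\binom{N-F}{m}/\binom{N}{m}\leq (1-F/N)^m$. Convexity of $x\mapsto\binom{x}{2}$ implies $F\geq k\binom{n/k}{2}$, so $1-F/N\leq 1-1/k+O(1/n)$; since $m=O(n)$, raising to the $m$-th power costs only a bounded factor $\exp(O(d))$. Summing trivially over the $k^n$ maps produces the upper bound $\Erw[\Zkc(\G)]=O(k^n(1-1/k)^m)$.

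For the matching lower bound I would apply Laplace's method. Parametrising $n_i=n/k+y_i$ with $\sum_i y_i=0$, Stirling's formula gives, uniformly for $\max_i|y_i|\leq n^{3/5}$,
$$\binom{n}{n_1,\ldots,n_k}=(1+o(1))\cdot\frac{k^n\sqrt{k}}{(2\pi n/k)^{(k-1)/2}}\exp\bigg(-\frac{k}{2n}\sum_i y_i^2\bigg),$$
while a second-order expansion of $\log\binom{N-F}{m}-\log\binom{N}{m}$ (legitimate since $F/N$ stays bounded away from $1$ in this regime) yields $\Theta((1-1/k)^m)\cdot\exp\bigl(-\frac{dk}{2(k-1)n}\sum_i y_i^2\bigr)$. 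Summing the resulting Gaussian integrand over the integer hyperplane $\{y\in\ZZ^k:\sum_i y_i=0\}$ is a Riemann approximation of a $(k-1)$-dimensional Gaussian integral of total mass $\Theta(n^{(k-1)/2})$, which exactly cancels the $n^{-(k-1)/2}$ prefactor from Stirling. Class vectors with $\max_i|y_i|>n^{3/5}$ contribute superpolynomially less by standard Chernoff bounds on the multinomial. Altogether this delivers $\Erw[\Zkc(\G)]=\Theta(k^n(1-1/k)^m)$.

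The main obstacle is bookkeeping rather than conceptual: one must combine the Stirling prefactor, the quadratic penalty from the multinomial, and the quadratic penalty from the edge factor, and verify that the resulting lattice sum converges to the continuous Gaussian integral so that no residual polynomial factor in $n$ survives. The positivity of the combined quadratic coefficient $\tfrac{k}{2n}(1+\tfrac{d}{k-1})$ is what makes the Gaussian integral finite and the argument go through; every step is a standard local-CLT-style estimate.
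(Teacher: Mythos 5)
Your upper bound is essentially the paper's: both bound $\binom{N-F}{m}/\binom{N}{m}$ by $(1-F/N)^m$, invoke convexity to get $F\geq\frac1k\binom n2$, and sum over all $k^n$ maps. The only real difference is in the lower bound. The paper argues much more crudely: it observes that a constant fraction of all $k^n$ maps $\sigma$ have every color class within $\sqrt n$ of $n/k$, that any such $\sigma$ satisfies $\cF(\sigma)/\binom n2=1/k+O(1/n)$ (hence $\pr[\sigma\in\cS_k(\G)]=\Theta((1-1/k)^m)$ because $m=O(n)$), and is done. You instead run a full Laplace-method / local-CLT calculation, tracking the Stirling prefactor, the quadratic penalty $\exp(-\tfrac{k}{2n}\sum y_i^2)$ from the multinomial, and the quadratic penalty $\exp(-\tfrac{dk}{2(k-1)n}\sum y_i^2)$ from the edge-avoidance factor, and then showing the $\Theta(n^{(k-1)/2})$ mass of the lattice-Gaussian sum cancels the Stirling prefactor. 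Your numbers check out (including the combined curvature coefficient and the $k^{k/2}/(2\pi n)^{(k-1)/2}$ prefactor), so the proof is correct — but it overshoots: it establishes the sharp asymptotic $\Erw[\Zkc(\G)]\sim C_{d,k}\,k^n(1-1/k)^m$ where only a two-sided $\Theta$ is needed, and it therefore carries substantially more bookkeeping than the paper's three-line argument. The trade-off is that your version is a self-contained derivation that one could reuse to read off the constant, whereas the paper leans on the lemma being folklore and just sketches the reason the order of magnitude is right.
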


Although \Lem~\ref{Lemma_firstMoment} is folklore, we briefly comment on how the expression comes about.
For $\sigma:\brk n\to\brk k$ let
	\begin{equation}\label{eqForb1}
	\cF(\sigma)=\sum_{i=1}^k\bink{|\sigma^{-1}(i)|}{2}
	\end{equation}
be the number of edges of the complete graph that are monochromatic under $\sigma$.
Then
	\begin{align}\label{eqForb2}
	\pr\brk{\sigma\in\cS_k(\G)}&=\bink{\bink n2-\cF(\sigma)}{m}\bigg/\bink{\bink n2}{m}.
	\end{align}
By convexity, we have $\cF(\sigma)\geq\frac1k\bink n2$ for all $\sigma$.
In combination with~(\ref{eqForb2}) and the linearity of expectation, this implies that $\Erw[\Zkc(\gnm)]= O(k^n(1-1/k)^m).$
Conversely, there are $\Omega(k^n)$ maps $\sigma:\brk n\to\brk k$ such that $\left|n/k-|\sigma^{-1}(i)| \right|\leq\sqrt n$ for all $i$, and
$\cF(\sigma)/\bink n2=1/k+O(1/n)$ for all such $\sigma$.
This implies  $\Erw[\Zkc(\G)]=\Omega(k^n(1-1/k)^m).$
The following result shows that $\Zkc(\G)$ is tightly concentrated about its expectation for $d<\dc$.

\begin{theorem}[\cite{Silent}]\label{Thm_Z}
There is $k_0>0$ such that for all $k\geq k_0$ and all $d<\dc$ we have
	$$\lim_{\omega\to\infty}\lim_{n\to\infty}\pr\brk{|\ln Z_k(\G)-\ln\Erw[Z_k(\G)]|\leq\omega}=1.$$
\end{theorem}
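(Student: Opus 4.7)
The plan is to prove the sharp concentration of $\ln Z_k(\G)$ via a two-pronged strategy: a tight second moment computation combined with Janson's small subgraph conditioning machinery. The defining inequality~(\ref{eqdc}) tells us exactly that below $\dc$ the sequence $\Erw[Z_k(\G)^{1/n}]$ matches the first moment rate $k(1-1/k)^{d/2}$, which is morally equivalent to $\Erw[Z_k(\G)^2] = O(\Erw[Z_k(\G)]^2)$. The Paley--Zygmund inequality then only yields concentration on the exponential scale. To bootstrap from ``$Z_k \geq \Omega(\Erw Z_k)$ with positive probability'' up to the $O(1)$ multiplicative concentration claimed in the theorem, one needs to identify the exact source of the order-one fluctuations of $Z_k/\Erw Z_k$, which in sparse random graphs are invariably driven by the Poisson-distributed numbers of short cycles.

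The first main step is the second moment analysis. Writing $Z_k(\G)^2 = \sum_{\sigma,\tau}\vecone\{\sigma,\tau\in\cS_k(\G)\}$ and parametrising the pair $(\sigma,\tau)$ by the overlap matrix $\rho\in \cP_n([k]\times[k])$ with $\rho_{ij} = |\sigma^{-1}(i)\cap \tau^{-1}(j)|/n$, a Laplace-type expansion reduces the second moment to an optimisation of an entropy-plus-edge-probability functional over the Birkhoff polytope $\Birk$. For $k\geq k_0$ one shows, using the entropy estimates refined in~\cite{Danny,Cond}, that this functional attains its global maximum uniquely at the flat matrix $\rho_{ij}=1/k^2$, and that for $d<\dc$ the value at this maximum matches $2\ln k + d\ln(1-1/k)$, giving $\Erw[Z_k(\G)^2] = \Theta(\Erw[Z_k(\G)]^2)$. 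This is where the hypothesis $d<\dc$ is really used: the definition (\ref{eqdc}) is precisely the statement that the Birkhoff maximum is not overtaken by ``condensed'' off-diagonal configurations.

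The second main step is to apply small subgraph conditioning. Let $C_\ell(\G)$ count cycles of length $\ell$; for each fixed $\ell$, $C_\ell(\G)$ is asymptotically $\Po(d^\ell/(2\ell))$ and independent across $\ell$. Compute the conditional first moment $\Erw[Z_k(\G)\mid C_\ell(\G)=c_\ell,\ \ell\leq L]$ by the usual cycle-rewiring trick, obtaining a factor of the form $\prod_\ell (1+\delta_\ell)^{c_\ell}\exp(-d^\ell\delta_\ell/(2\ell))$ where $\delta_\ell$ depends on the eigenvalues of the colour-transfer matrix. A computation then shows $\Erw[Z_k^2]/\Erw[Z_k]^2 \to \prod_{\ell\geq1}(1+d^\ell\delta_\ell^2/(2\ell))\exp(-d^\ell\delta_\ell^2/(2\ell))$. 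Janson's theorem then yields that $Z_k(\G)/\Erw[Z_k(\G)]$ converges in distribution to $W=\prod_\ell(1+\delta_\ell)^{\Po(d^\ell/(2\ell))}\exp(-d^\ell\delta_\ell/(2\ell))$, a random variable that is almost surely strictly positive and finite. In particular $\ln(Z_k/\Erw Z_k)$ is tight, which is exactly the statement of the theorem.

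The hard part is unquestionably the second moment step: one must ring out the correct constant factor from a $k^2$-dimensional optimisation, and matching the leading exponential at $d=\dc-\eps$ up to constants requires a delicate argument that the Birkhoff critical point is the unique global maximiser all the way up to $\dc$. The $\delta_k$-type error terms in~(\ref{eqdk}) would have to be propagated carefully; without the refined entropy bookkeeping from~\cite{Cond}, one would only recover concentration for $d\lesssim 2(k-1)\ln(k-1)$, falling short of $\dc$ by an additive $\ln k$ exactly as in the earlier Achlioptas--Naor regime. Small subgraph conditioning itself is fairly mechanical once the first two moments are in hand, and taking $k\geq k_0$ large ensures that the $\delta_\ell$ are summable so that the limiting product $W$ is well-defined.
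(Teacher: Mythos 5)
Your high-level plan (second moment plus small subgraph conditioning) is indeed the machinery behind the cited result from \cite{Silent}, but the central claim of your second paragraph---that $\Erw[Z_k(\G)^2]=\Theta(\Erw[Z_k(\G)]^2)$ for all $d<\dc$, because the overlap functional is ``uniquely maximized at the flat matrix''---is false, and this breaks the argument. The overlap functional $f(\rho)=H(\rho)+\frac d2\ln(1-2/k+\norm\rho_2^2)$ has a second stationary point among the \emph{$k$-stable} overlaps (those with all diagonal entries close to $1/k$, i.e.\ near the identity), corresponding to pairs $(\sigma,\tau)$ inside a common cluster. A direct calculation shows that this stationary value overtakes $f(\bar\rho)$ already strictly \emph{below} $\dc$, so $\Erw[Z_k^2]/\Erw[Z_k]^2$ grows exponentially in $n$ for $d$ in the upper part of $(0,\dc)$. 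This is not a bookkeeping issue that the refinements of \cite{Danny,Cond} make disappear: note that \Lem~\ref{Lemma_Danny} bounds $f(\rho)$ only for $s$-stable $\rho$ with $1\le s\le k-1$ and for $0$-stable $\rho$, deliberately omitting the $k$-stable case, and \Lem~\ref{Lemma_clusterSize} supplies in its place a \emph{probabilistic} (whp) bound on cluster sizes rather than a moment bound---precisely because the moment bound you assert does not hold. Relatedly, the defining inequality~(\ref{eqdc}) is a statement about the first moment of the fractional power $Z_k^{1/n}$ (equivalently, about concentration of $\frac1n\ln Z_k$), not about $\Erw[Z_k^2]$; the two are not ``morally equivalent,'' and in fact the second moment threshold sits strictly below $\dc$.

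The fix---which is what \cite{Silent} actually does---is to apply the second moment and the subgraph conditioning not to $Z_k$ itself but to a truncated count $\Zgood$ restricted to balanced, separable colorings with small clusters, for which the $k$-stable contribution is excised. One then shows $\Erw[\Zgood]\sim\Erw[Z_k]$, establishes $\Erw[\Zgood^2]=O(\Erw[\Zgood]^2)$ using the ingredients in \Lem s~\ref{Lemma_f}--\ref{Lemma_clusterSize}, and uses Markov's inequality for the upper tail of $Z_k$. Even then the application of Janson's theorem is not ``fairly mechanical'': the truncation is a property of $\sigma$ \emph{relative to} $\G$ (separability and cluster size are graph-dependent), so one must verify that the cycle-conditioned first moments $\Erw[\Zgood\mid C_1,\dots,C_L]$ still factor in the required way. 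Your write-up does not address either of these points, and as stated the proof would only establish the theorem for $d$ up to roughly $2(k-1)\ln(k-1)$.
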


For $\alpha=(\alpha_1,\ldots,\alpha_k)\in\cP_n([k])$ we let $Z_{\alpha}(\G)$ be the number of $k$-colorings $\sigma$ of $\G$
such that $|\sigma^{-1}(i)|=\alpha_i n$ for all $i\in[k]$.
Conversely, for a map $\sigma:\brk n\to\brk k$ let $\alpha(\sigma)=n^{-1}(\sigma^{-1}(i))_{i\in[k]}\in\cP_n(\brk k)$.
Additionally, let $\bar\alpha=k^{-1}\vecone=(1/k,\ldots,1/k)$.

\begin{lemma}[{\cite[\Lem~3.1]{Silent}}]\label{Lemma_phiFirstMoment}
Let
	$\varphi(\alpha)=H(\alpha)+\frac{d}2\ln\bc{1-\norm\alpha_2^2}$.
Then 
	\begin{align*}
	\Erw[Z_\alpha(\Gnm)]&=O(\exp(n\varphi(\alpha)))&\mbox{uniformly for all }\alpha\in\cP_n(\brk k),\\
	\Erw[Z_\alpha(\Gnm)]&=\Theta(n^{(1-k)/2})\exp(n\varphi(\alpha))&
		\mbox{uniformly for all $\alpha\in\cP_n(\brk k)$ such that $\norm{\alpha-\bar\alpha}_2\leq k^{-3}$}.
	\end{align*}
\end{lemma}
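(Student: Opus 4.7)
The plan is a direct first-moment computation via Stirling's formula. First, by linearity of expectation and the observation that the probability in (\ref{eqForb2}) depends on $\sigma$ only through $\cF(\sigma)$, which equals $\cF(\alpha):=\sum_i\binom{\alpha_i n}{2}$ for every $\alpha$-balanced map, I would write
\[
\Erw[Z_\alpha(\Gnm)] \;=\; \binom{n}{\alpha_1 n,\ldots,\alpha_k n}\cdot\frac{\binom{N-\cF(\alpha)}{m}}{\binom{N}{m}}, \qquad N:=\binom{n}{2}.
\]

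Next I would estimate each factor. The elementary inequality $\binom{n}{\alpha n}\leq \alpha^{-\alpha n}$ immediately yields the uniform upper bound $\binom{n}{\alpha_1 n,\ldots,\alpha_k n}\leq \exp(nH(\alpha))$. Under the assumption $\norm{\alpha-\bar\alpha}_2\leq k^{-3}$, every $\alpha_i$ lies in a compact subset of $(0,1)$, so Stirling's formula applied $k+1$ times produces the matching sharper estimate $\Theta(n^{(1-k)/2})\exp(nH(\alpha))$, the power of $n$ arising from the mismatch between the single $\sqrt{2\pi n}$ in the numerator and the $k$ factors $\sqrt{2\pi\alpha_i n}$ in the denominator.

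For the ratio of binomials, a direct calculation gives the exact identity $1-\cF(\alpha)/N = \frac{n}{n-1}(1-\norm\alpha_2^2)$. Using the telescoping product
\[
\frac{\binom{N-\cF(\alpha)}{m}}{\binom{N}{m}} \;=\; \prod_{j=0}^{m-1}\frac{N-\cF(\alpha)-j}{N-j}
\]
together with $m=\Theta(n)$ and $N=\Theta(n^2)$, I would obtain
\[
\ln\frac{\binom{N-\cF(\alpha)}{m}}{\binom{N}{m}} \;=\; m\ln(1-\cF(\alpha)/N)+O(1) \;=\; \tfrac{nd}{2}\ln(1-\norm\alpha_2^2)+O(1),
\]
uniformly in $\alpha$; the degenerate case $\norm\alpha_2^2=1$ is handled by the fact that then $\cF(\alpha)=N$, so both sides vanish. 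Multiplying the two estimates delivers both claims of the lemma.

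The main (very mild) obstacle is controlling the error terms uniformly in $\alpha$. The decisive point is that the identity $1-\cF(\alpha)/N = (1+O(1/n))(1-\norm\alpha_2^2)$ carries an $O(1/n)$ term independent of $\alpha$, which keeps the logarithm under control all the way to the boundary of the simplex. Analogously, the $\Theta$-factor $\bigl(\prod_i\alpha_i\bigr)^{-1/2}$ produced by Stirling on the multinomial is bounded above and below by positive constants depending only on $k$ throughout the region $\norm{\alpha-\bar\alpha}_2\leq k^{-3}$, so no essential difficulty arises.
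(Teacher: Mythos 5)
The paper does not prove this lemma; it is quoted directly from reference [Silent] (Bapst, Coja-Oghlan, Efthymiou, ``Planting colourings silently'', Lemma 3.1), so there is no in-paper proof to compare against. Your direct first-moment computation via Stirling and the telescoping product is the natural argument and is consistent in spirit with the brief sketch the paper gives of the cruder \Lem~\ref{Lemma_firstMoment} in \Sec~\ref{Sec_firstMoment}.

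One imprecision worth flagging: you assert that
$\ln\bigl(\binom{N-\cF(\alpha)}{m}/\binom{N}{m}\bigr) = \tfrac{nd}{2}\ln(1-\norm\alpha_2^2) + O(1)$
holds uniformly over the whole simplex. That is not literally true when $1-\norm\alpha_2^2$ is as small as $\Theta(1/n)$: there the ceiling correction $m-dn/2=O(1)$ alone contributes a term $O(\ln(1-\norm\alpha_2^2))=O(\ln n)$, not $O(1)$, and the per-factor error in the telescoping product is no longer $O(1/n)$ either. Your argument is nevertheless salvaged because only a one-sided bound is needed in general: each factor $\frac{N-\cF(\alpha)-j}{N-j}$ is at most $\frac{N-\cF(\alpha)}{N}$, so
$\binom{N-\cF(\alpha)}{m}/\binom{N}{m}\leq\bigl(1-\cF(\alpha)/N\bigr)^m\leq O(1)\,(1-\norm\alpha_2^2)^{dn/2}$,
using $m\geq dn/2$, $1-\norm\alpha_2^2\leq1$ and $(\tfrac{n}{n-1})^m=O(1)$. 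For the two-sided $\Theta$-claim the hypothesis $\norm{\alpha-\bar\alpha}_2\leq k^{-3}$ keeps $1-\norm\alpha_2^2$ bounded away from zero, and there the genuine $O(1)$ error is correct. With that distinction made explicit the proof is sound.
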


\subsection{The second moment}\label{Sec_secondMoment}
Define the {\em overlap} of $\sigma,\tau:[n]\to[k]$ as the $k\times k$ matrix $\rho(\sigma,\tau)$ with entries
	$$\rho_{ij}(\sigma,\tau)=\frac1n\abs{\sigma^{-1}(i)\cap\tau^{-1}(j)}.$$
Then the number of edges of the complete graph that are monochromatic under either $\sigma$ or $\tau$ equals
	$$\cF(\sigma,\tau)=\cF(\sigma)+\cF(\tau)-\sum_{i,j\in\brk k}\bink{n\rho_{ij}(\sigma,\tau)}{2}.$$
For $i\in\brk k$ let $\rho_{i\nix}$ signify the $i$th row of the matrix $\rho$, and for $j\in\brk k$ let $\rho_{\nix j}$ denote the $j$th column.
An elementary application of inclusion/exclusion yields (cf.~\cite[Fact~5.4]{Silent})
	\begin{align}\label{eqErwZrho}
	\pr[\sigma,\tau\in\cS_k(\Gnm)]&
		=\frac{\bink{\bink n2-\cF(\sigma,\tau)}m}{\bink{\bink n2}m}
		=O\bc{\brk{1-\sum_{i\in[k]}(\norm{\rho_{i\nix}(\sigma,\tau)}_2^2+\norm{\rho_{\nix i}(\sigma,\tau)}_2^2)
			+\norm{\rho(\sigma,\tau)}_2^2}^m}.
	\end{align}

We can view $\rho(\sigma,\tau)$ as a distribution on $\brk k\times\brk k$, i.e., $\rho(\sigma,\tau)\in\cP_n(\brk k^2)$.
Let $\bar\rho$ be the uniform distribution on $\brk k^2$.
Moreover, for $\rho\in\cP_{n}(\brk k^2)$ let $Z_\rho^\tensor(\Gnm)$ be the number of pairs $\sigma_1,\sigma_2\in\cS_k(\Gnm)$ with overlap $\rho$.
Finally, let
	\begin{align}
	\cR_{n,k}(\omega)&=\cbc{\rho\in\cP_n([k]^2):\forall i\in\brk k:
	\norm{\rho_{i\nix}-\bar\alpha}_2,\norm{\rho_{\nix i}-\bar\alpha}_2\leq\sqrt{\omega/n}},&\mbox{and}\\
	f(\rho)&=H(\rho)+\frac d2\ln(1-2/k+\norm{\rho}_2^2).
	\end{align}

\begin{lemma}[\cite{AchNaor}]\label{Lemma_f}
Assume that $\omega=\omega(n)\to\infty$ but $\omega=o(n)$.
For all $k\geq3,d>0$ we have
	\begin{align*}
	\Erw[Z_\rho^\tensor(\Gnm)]&=O(n^{(1-k^2)/2})\exp(nf(\rho))&
		\mbox{uniformly for all $\rho\in\cR_{n,k}(\omega)$ s.t.\ }\norm{\rho-\bar\rho}_\infty\leq k^{-3},\\
	\Erw[Z_\rho^\tensor(\Gnm)]&=O(\exp(nf(\rho)))&\mbox{uniformly for all $\rho\in\cR_{n,k}(\omega)$.}
	\end{align*}
Moreover, if $d<2(k-1)\ln(k-1)$, then for any $\eta>0$ there exists $\delta>0$ such that
	\begin{equation}\label{eqAchNaor}
	f(\rho)<f(\bar\rho)-\delta\qquad\mbox{ for all $\rho\in\cR_{n,k}(\omega)$ such that $\norm{\rho-\bar\rho}_2>\eta$}.
	\end{equation}
\end{lemma}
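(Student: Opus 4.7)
The plan is to split $\Erw[Z_\rho^\tensor(\Gnm)]$ into a combinatorial factor $M(n,\rho)$ counting ordered pairs $(\sigma_1,\sigma_2):[n]\to[k]$ of overlap $\rho$ and a probabilistic factor supplied by~(\ref{eqErwZrho}). Stirling's formula gives
\begin{align*}
M(n,\rho)=\binom{n}{(n\rho_{ij})_{i,j\in[k]}}\leq \exp(nH(\rho))\qquad\text{uniformly in $\rho\in\cP_n([k]^2)$,}
\end{align*}
and, when $\|\rho-\bar\rho\|_\infty\leq k^{-3}$ so that every entry of $\rho$ is bounded away from zero, the sharp Stirling estimate $M(n,\rho)=\Theta(n^{(1-k^2)/2})\exp(nH(\rho))$.

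For the probabilistic factor, the assumption $\rho\in\cR_{n,k}(\omega)$ forces the row and column marginals $\alpha_i=\sum_j\rho_{ij}$, $\beta_j=\sum_i\rho_{ij}$ of $\rho$ to satisfy $\|\alpha\|_2^2,\|\beta\|_2^2=1/k+O(\omega/n)$, so the density of pairs of vertices that are monochromatic under $\sigma_1$ or $\sigma_2$ equals $\cF(\sigma_1,\sigma_2)/\binom{n}{2}=2/k-\|\rho\|_2^2+O(\omega/n)$. Plugging this into~(\ref{eqErwZrho}) and noting that $m\cdot O(\omega/n)=O(\omega)=o(n)$ yields
\begin{align*}
\pr[\sigma_1,\sigma_2\in\cS_k(\Gnm)]\leq \exp\!\bigl(m\ln(1-2/k+\|\rho\|_2^2)+o(n)\bigr).
\end{align*}
Multiplying by the appropriate Stirling bound for $M(n,\rho)$ establishes the two upper bounds on $\Erw[Z_\rho^\tensor(\Gnm)]$.

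For the strict gap~(\ref{eqAchNaor}) the problem reduces to a deterministic analysis of $f$ on the space of overlap matrices with near-uniform marginals. A direct computation gives $f(\bar\rho)=2\ln k+d\ln(1-1/k)$, which is twice the first-moment exponent from \Lem~\ref{Lemma_firstMoment}. I would then invoke the optimisation argument of Achlioptas and Naor~\cite{AchNaor}: write $\rho=\bar\rho+\Delta$ with $\Delta$ of zero row and column sums, decompose $\Delta$ into a rank-one (product) piece and a zero-marginal residual, reduce the rank-one contribution to maximising the single-coordinate first-moment function $\varphi$ from \Lem~\ref{Lemma_phiFirstMoment} whose unique maximiser on $\cP([k])$ is $\bar\alpha$, and control the residual through a Taylor expansion showing that the Hessian of $f$ at $\bar\rho$ on the zero-marginal subspace is negative definite precisely when $d<2(k-1)\ln(k-1)$. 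Compactness of $\{\rho\in\cR_{n,k}(\omega):\|\rho-\bar\rho\|_2>\eta\}$ then promotes the resulting strict pointwise inequality $f(\rho)<f(\bar\rho)$ to the uniform $\delta$-gap claimed.

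The main obstacle lies in this last step: although $H$ is concave, the term $\tfrac{d}{2}\ln(1-2/k+\|\rho\|_2^2)$ is convex in $\rho$, so $f$ is not globally concave. The Achlioptas-Naor threshold $d=2(k-1)\ln(k-1)$ is exactly the point at which concavity of $H$ ceases to dominate convexity of the log-term on the marginal-constraint subspace; beyond this threshold the maximiser of $f$ migrates away from $\bar\rho$, which is precisely why the classical second-moment method fails for larger $d$. Rather than reproducing the somewhat delicate case analysis, I would quote~\cite{AchNaor} directly, since the formulation of $f$ and of the constraint set here coincide with theirs.
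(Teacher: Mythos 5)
The paper supplies no proof of this lemma; it is quoted verbatim from~\cite{AchNaor}. So the comparison is between your reconstruction and the original Achlioptas--Naor argument, and at the level of structure you track it faithfully: factor $\Erw[Z_\rho^\tensor(\Gnm)]$ into the multinomial count $M(n,\rho)=\binom{n}{(n\rho_{ij})_{i,j\in[k]}}$ of ordered pairs of maps with overlap $\rho$ times the probability that any fixed such pair is simultaneously proper, apply Stirling to $M(n,\rho)$ and the hypergeometric ratio~(\ref{eqErwZrho}) to the probability, and obtain the $\delta$-gap from the optimisation of $f$ over the near-uniform-marginal slice. Deferring that optimisation to~\cite{AchNaor} is reasonable, since that is exactly what the paper does.

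One intermediate step, as written, does not give the first display of the lemma. You estimate $\cF(\sigma_1,\sigma_2)/\bink n2 = 2/k - \norm{\rho}_2^2 + O(\omega/n)$ and conclude $\pr[\sigma_1,\sigma_2\in\cS_k(\Gnm)]\leq\exp\bigl(m\ln(1-2/k+\norm{\rho}_2^2)+o(n)\bigr)$. The stated bound in the central region is $O(n^{(1-k^2)/2})\exp(nf(\rho))$, and a stray multiplicative $\exp(O(\omega))$ with $\omega\to\infty$ swamps the polynomial $n^{(1-k^2)/2}$. The observation that rescues the bound is that the marginal-deviation contribution is one-sided: writing $\alpha,\beta\in\cP([k])$ for the row and column marginals of $\rho$, one always has $\norm{\alpha}_2^2\geq 1/k$ and $\norm{\beta}_2^2\geq 1/k$, so $\cF(\sigma_1,\sigma_2)/\bink n2=\norm{\alpha}_2^2+\norm{\beta}_2^2-\norm{\rho}_2^2+O(1/n)\geq 2/k-\norm{\rho}_2^2+O(1/n)$, and hence the probability is at most $\Theta(1)\cdot(1-2/k+\norm{\rho}_2^2)^m$. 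This reduces the multiplicative error from $\exp(o(n))$ to $\Theta(1)$, and the sharp Stirling asymptotics $M(n,\rho)=\Theta(n^{(1-k^2)/2})\exp(nH(\rho))$ then yield exactly the claimed display. With this correction your argument is sound.
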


The bound~(\ref{eqAchNaor}) applies for $d<2(k-1)\ln(k-1)$, about $\ln k$ below $\dc$.
To bridge the gap,  let $\kappa=1-\ln^{20}k/k$ and call $\rho\in\cP_n(\brk k^2)$ {\em separable} if $k\rho_{ij}\not\in(0.51,\kappa)$ for all $i,j\in[k]$.
Moreover, $\sigma\in\cS_k(\G)$ is {\em separable} if $\rho(\sigma,\tau)$ is separable for all $\tau\in\cS_k(\G)$.
Otherwise, we call $\sigma$ {\em inseparable}.
Further, $\rho$ is {\em $s$-stable} if there are precisely $s$ entries such that $k\rho_{ij}\geq\kappa$.

\begin{lemma}[\cite{Danny}]\label{Lemma_Danny}
There is $k_0$ such that for all $k>k_0$ and all
$2(k-1)\ln(k-1)\leq d\leq2k\ln k$ the following is true.
\begin{enumerate}
\item Let $\tilde Z_k(\G)=\abs{\cbc{\sigma\in\cS_k(\G):\sigma\mbox{ is inseparable}}}$.
	Then
		$\Erw[\tilde Z_k(\G)]\leq\exp(-\Omega(n))\Erw[\Zkc(\G)]$.
\item Let $1\leq s\leq k-1$.
	Then $f(\rho)<f(\bar\rho)-\Omega(1)$ uniformly for all $s$-stable $\rho$.
\item For any $\eta>0$ there is $\delta>0$ such that
		$\sup\{f(\rho):\mbox{$\rho$ is $0$-stable and $\norm{\rho-\bar\rho}_2>\eta$}\}<f(\bar\rho)-\delta$.
		\end{enumerate}
\end{lemma}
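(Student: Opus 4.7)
The plan is to treat Lemma~\ref{Lemma_Danny} as three tightly interlinked facts about the function $f(\rho)=H(\rho)+\frac d2\ln(1-2/k+\norm\rho_2^2)$ on the set $\cR_{n,k}(\omega)$ of overlap matrices with approximately uniform marginals. Parts (2) and (3) are analytic/combinatorial statements about the shape of $f$, and part (1) follows from these bounds through a paired first-moment computation. I would work in the order (3), (2), (1), because the first-moment bound on $\tilde Z_k(\G)$ reduces, via Markov and \Lem~\ref{Lemma_f}, to showing that $f(\rho)<f(\bar\rho)-\Omega(1)$ uniformly on the ``forbidden band'' $k\rho_{ij}\in(0.51,\kappa)$, which splits naturally into the $0$-stable and $s$-stable regimes.

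For part (3), I would compute the Hessian of $f$ at $\bar\rho$. The entropy contributes $-k^2\,I$ on the diagonal, while the log term contributes $d/(1-1/k)^2$ on the diagonal plus a rank-one correction of order $d/k^2$. Restricted to the tangent space of matrices with uniform marginals, this Hessian is strictly negative definite as soon as $d<(k-1)^2$, which is satisfied with room to spare in the range $d\le 2k\ln k$. This yields a local quadratic bound $f(\bar\rho+\Delta)\le f(\bar\rho)-c\norm\Delta_2^2$. To extend globally over $0$-stable matrices one uses that every entry is at most $0.51/k$, so the strict concavity of $H$ is strong enough to dominate the log term along the line segment from $\bar\rho$ to any $0$-stable $\rho$ with $\norm{\rho-\bar\rho}_2>\eta$; combining the local quadratic bound with this uniform concavity margin produces the required $\delta=\delta(\eta)$.

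For part (2), I would exploit the block structure imposed by $s$-stability. If exactly $s$ entries satisfy $k\rho_{ij}\ge\kappa$, each large entry must lie in a distinct row and a distinct column (since rows and columns of $\rho$ sum to $1/k+o(1)$), so after relabelling the large entries sit on a partial diagonal $(i,\pi(i))$ for $i\le s$. Writing $\rho$ as a convex combination of a nearly $s$-permutation part of weight $\approx s/k$ and a residual part distributed over the remaining $(k-s)\times(k-s)$ block plus off-block cells, one estimates $H(\rho)\le 2\ln k -s\ln k/k + O(\log^{20}k/k)$ while $\norm\rho_2^2=s/k^2+O(1/k^2)$, so $\tfrac d2\ln(1-2/k+\norm\rho_2^2)-\tfrac d2\ln(1-1/k)^2$ grows by $O(d s/k^2)$. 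For $d\le 2k\ln k$ and $1\le s\le k-1$, the entropy loss of order $s\ln k/k$ beats this gain by a factor of $\Omega(1)$, yielding $f(\rho)<f(\bar\rho)-\Omega(1)$.

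Given (2) and (3), part (1) follows by a standard paired first-moment / Markov argument: $\Erw[\tilde Z_k(\G)]^2\le \Erw[\tilde Z_k(\G)^2]\le \sum_{\rho\text{ not separable}} \Erw[Z_\rho^\tensor(\G)]$, the sum ranging over $\rho\in\cR_{n,k}(\omega)$ with some entry in the forbidden band $k\rho_{ij}\in(0.51,\kappa)$. By \Lem~\ref{Lemma_f} this is at most $\exp(n\sup f(\rho)+o(n))$, and parts (2)--(3) plus interpolation through the band (where any entry $k\rho_{ij}\in(0.51,\kappa)$ makes $\rho$ neither $0$-stable nor fully $s$-stable in that coordinate, so one of the two bounds applies after adjusting $s$) give $\sup f<f(\bar\rho)-\Omega(1)$. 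Dividing by $\Erw[\Zkc(\G)]^2\asymp\exp(nf(\bar\rho))$, which is the value obtained from \Lem~\ref{Lemma_firstMoment} squared, produces the $\exp(-\Omega(n))$ factor. The main obstacle I anticipate is part (2): the bookkeeping for the block structure of $s$-stable matrices, and in particular the proof that the large entries must form a partial permutation pattern, must be done carefully enough that the $\Omega(1)$ margin survives uniformly in $s$ and across the entire range $2(k-1)\ln(k-1)\le d\le 2k\ln k$, where the naive Achlioptas--Naor analysis breaks down.
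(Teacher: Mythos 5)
The paper does not actually prove \Lem~\ref{Lemma_Danny}: it is imported wholesale from \cite{Danny}, where it is established through a substantially longer argument. So there is no ``paper's own proof'' here to compare against; I can only assess your sketch on its own terms, and it has two concrete problems.

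First, in your argument for part~(3) you assert that for a $0$-stable $\rho$ ``every entry is at most $0.51/k$.'' That confuses $0$-stability with separability. By definition, $\rho$ is $0$-stable when \emph{no} entry satisfies $k\rho_{ij}\geq\kappa$, where $\kappa=1-\ln^{20}k/k$; the entries may therefore be anywhere up to $\kappa/k$, which is essentially $1/k$, far above $0.51/k$. This matters because once $\rho_{ij}$ gets close to $1/k$, the diagonal Hessian term $-1/\rho_{ij}\approx -k$ of the entropy no longer obviously dominates the $d/(1-2/k+\norm\rho_2^2)^2\approx d$ contribution from the logarithm when $d$ is as large as $2k\ln k$. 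The ``strict concavity of $H$ dominates along the segment'' step is exactly the delicate part of \cite{Danny}, and the incorrect numerical bound on the entries hides the difficulty rather than resolving it.

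Second, the derivation of part~(1) from parts~(2)--(3) is broken at the accounting stage. You write $\Erw[\tilde Z_k]^2\le\Erw[\tilde Z_k^2]\le\sum_\rho\Erw[Z_\rho^\tensor]$, but the first inequality is Cauchy--Schwarz in the wrong direction for a first-moment bound (it lower-bounds a second moment rather than upper-bounding a first), and $\tilde Z_k^2$ is not what the pair sum $\sum_\rho Z_\rho^\tensor$ controls. The correct starting point is $\tilde Z_k\le\sum_{\rho\text{ insep.}}Z_\rho^\tensor$, obtained by unioning over the witness $\tau$. Even once that is fixed, \Lem~\ref{Lemma_f} then bounds the pair sum by $\exp(nf(\bar\rho)-\Omega(n))$, i.e.\ by $\Erw[\Zkc(\G)]^2\exp(-\Omega(n))$, whereas part~(1) demands the much stronger $\exp(-\Omega(n))\Erw[\Zkc(\G)]$. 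Closing that gap requires the additional observation that $n^{-1}\ln\Erw[\Zkc(\G)]=\ln k+\frac d2\ln(1-1/k)=O(\ln k/k)$ throughout the given range of $d$, so that the extra factor of $\Erw[\Zkc(\G)]$ can be absorbed into the exponential loss --- provided the gap delivered by parts~(2)--(3) is at least of that order. Your sketch treats ``$\Omega(1)$'' as if any positive constant would do and never checks it against the actual growth rate of $\Erw[\Zkc(\G)]$, which is where the quantitative content of \cite{Danny} lives. Finally, note that an inseparable but $0$-stable $\rho$ has $\norm{\rho-\bar\rho}_2$ only of order $1/k$, so one must explicitly choose $\eta$ of that order before invoking part~(3); the sketch leaves $\eta$ floating.
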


\Lem~\ref{Lemma_Danny} omits the $k$-stable case.
To deal with it, we introduce
	\begin{equation}\label{eqcluster}
	\cC(G,\sigma)=\cbc{\tau\in\cS_k(G):\rho(\sigma,\tau)\mbox{ is $k$-stable}}.
	\end{equation}

\begin{lemma}[\cite{Cond}]\label{Lemma_clusterSize}
There exist $k_0$ and $\omega=\omega(n)\to\infty$ such that for all $k\geq k_0$, $2(k-1)\ln(k-1)\leq d<\dc$ we have
	\begin{align*}
	\lim_{n\to\infty}\pr\brk{\bck{|\cC(\G,\SIGMA)|}_{\G,k}\leq\omega^{-1}\Erw\brk{\Zkc(\G)}}=1.
	\end{align*}
\end{lemma}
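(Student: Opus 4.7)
The plan is to bound $N(\G) := \bck{|\cC(\G,\SIGMA)|}_{\G,k}\cdot\Zkc(\G) = \sum_{\rho\text{ is }k\text{-stable}} Z_\rho^\tensor(\G)$ via a first-moment calculation, and then divide by $\Zkc(\G)$ using the concentration result \Thm~\ref{Thm_Z} (which requires $d<\dc$) to produce the required high-probability bound on $\bck{|\cC(\G,\SIGMA)|}_{\G,k}$. Intuitively, a $k$-stable overlap forces $\tau$ to agree with $\sigma$ on nearly every vertex up to a global color relabeling, so $N(\G)$ essentially counts \emph{intra-cluster} pairs of colorings.

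First, I would discard inseparable $\sigma$ using \Lem~\ref{Lemma_Danny}(i): their contribution to $\bck{|\cC(\G,\SIGMA)|}_{\G,k}$ is at most $\tilde Z_k(\G)$, whose expectation is $\exp(-\Omega(n))\Erw[\Zkc(\G)]$ and is hence negligible. For separable $\sigma$, every $\tau\in\cC(\G,\sigma)$ produces a $k$-stable separable $\rho(\sigma,\tau)$; combined with the row and column marginal constraints, this forces $\rho$ to lie within $O(\ln^{20}(k)/k^2)$ of $(1/k)\Pi$ for some permutation matrix $\Pi$. Consequently all relevant $\rho$ lie in $\cR_{n,k}(\omega)$ for a suitable $\omega(n)\to\infty$ and form a set of cardinality at most $k!\cdot n^{O(k^2)}$. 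Applying the uniform bound of \Lem~\ref{Lemma_f} yields
$$\Erw[N(\G)]\leq n^{O(k^2)}\max_{\rho\text{ is }k\text{-stable}}\exp(nf(\rho)).$$

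The crucial estimate is that on the $k$-stable region $f(\rho)$ barely exceeds its value $f(\rho_*)=\ln k+(d/2)\ln(1-1/k)$ at the exact permutation $\rho_*=(1/k)\Pi$. Parameterizing the extremal symmetric perturbation by $\rho_{ii}=1/k-\delta$ and $\rho_{ij}=\delta/(k-1)$ for $i\neq j$, an elementary computation yields $f(\rho)-f(\rho_*)\approx -k\delta\ln\delta - d\delta/(1-1/k)$, whose maximum lies at $\delta^*\sim\exp(-d/(k-1))$ with surplus $k\delta^* = O(1/k)$ precisely because $d\geq 2(k-1)\ln(k-1)$; below that threshold the surplus would be $\Theta(1)$. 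Combining with \Lem~\ref{Lemma_firstMoment}, which gives $\Erw[\Zkc(\G)]=\Theta(\exp(n\ln k+m\ln(1-1/k)))$, we conclude
$$\Erw[N(\G)]\leq n^{O(k^2)}\exp(n/k)\,\Erw[\Zkc(\G)].$$

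Finally, Markov's inequality on $N(\G)$ supplies $N(\G)\leq\omega_1(n)\Erw[N(\G)]$ \whp\ for any $\omega_1(n)\to\infty$, and \Thm~\ref{Thm_Z} yields $\Zkc(\G)\geq e^{-W}\Erw[\Zkc(\G)]$ with probability tending to $1$ as $W,n\to\infty$. Dividing,
$$\bck{|\cC(\G,\SIGMA)|}_{\G,k}=\frac{N(\G)}{\Zkc(\G)}\leq e^W\omega_1(n)n^{O(k^2)}\exp(n/k)\quad\text{w.h.p.}$$
Since $\Erw[\Zkc(\G)]=\exp(n\cdot\Theta(\ln k/k))$ dominates $\exp(n/k)$ by an exponential factor once $k\geq k_0$, the choice $\omega(n)=\Erw[\Zkc(\G)]/(e^W\omega_1(n)n^{O(k^2)}\exp(n/k))$ tends to infinity and witnesses the claim. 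The principal obstacle is the perturbative bound $f(\rho)-f(\rho_*)=O(1/k)$ on the $k$-stable region, which is where the lower bound $d\geq 2(k-1)\ln(k-1)$ is essential; a rigorous version must handle arbitrary (not merely symmetric) perturbations and dovetail the marginal constraints with the separability condition.
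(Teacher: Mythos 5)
Since the paper quotes this lemma from \cite{Cond} without giving a proof, there is no in-paper argument to compare against; I assess your proposal on its own. The structure (first moment of $N(\G)$, then Markov combined with the concentration of $\Zkc(\G)$ from \Thm~\ref{Thm_Z}) is natural, and your identification of the benchmark $\rho_*=(1/k)\Pi$ is correct, but the required inequality $\Erw[N(\G)]=o(\Erw[\Zkc(\G)]^2)$ \emph{fails} for $d$ in an entire interval just below $\dc$, so the Markov step yields nothing there. Concretely, by \Lem~\ref{Lemma_firstMoment} we have $f(\rho_*)=\ln k+(d/2)\ln(1-1/k)=n^{-1}\ln\Erw[\Zkc(\G)]+O(1/n)$, and your perturbation $\rho_{ii}=1/k-\delta$, $\rho_{ij}=\delta/(k-1)$ has, to leading order, surplus $\sup_\delta\{(k-1)\delta-k\delta\ln\delta-dk\delta/(k-1)\}=k\delta^*$ at $\delta^*=e^{-1/k-d/(k-1)}$, which equals $(1+o_k(1))/k$ at $d=\dc=(2k-1)\ln k-2\ln2+o_k(1)$. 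Meanwhile, at $d=\dc$ one computes $n^{-1}\ln\Erw[\Zkc(\G)]=\ln k+(\dc/2)\ln(1-1/k)=(\ln2)/k+O((\ln k)/k^2)$, and since $\ln 2<1$ the surplus strictly exceeds the exponent of $\Erw[\Zkc(\G)]$; in fact $\Erw[N(\G)]\geq\exp(\Omega(n/k))\,\Erw[\Zkc(\G)]^2$ near $\dc$.

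Your assertion that $\Erw[\Zkc(\G)]=\exp(n\cdot\Theta(\ln k/k))$ ``dominates $\exp(n/k)$'' is what hides this: the $\Theta(\ln k/k)$ estimate holds at the lower endpoint $d=2(k-1)\ln(k-1)$, where $n^{-1}\ln\Erw[\Zkc(\G)]\approx(\ln k)/(2k)$, but the exponent decreases linearly in $d$ and collapses to $(\ln2)/k<1/k$ as $d\uparrow\dc$ --- this collapse is precisely the asymptotic content of the definition (\ref{eqdc}). What goes wrong is the onset of condensation itself: near $\dc$ a vanishing fraction of graphs carries exponentially oversized clusters, and those atypical graphs dominate the \emph{annealed} average $\Erw[N(\G)]$, so an unconditional first moment systematically overestimates the typical cluster size. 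The argument in \cite{Cond} therefore controls the cluster size \emph{conditional} on the instance being typical (via a planted-model coupling that uses the exact characterisation of $\dc$), not just in expectation. In your proposal the hypothesis $d<\dc$ is used only to invoke \Thm~\ref{Thm_Z}, and $d\geq2(k-1)\ln(k-1)$ only to place $\delta^*\lesssim k^{-2}$ inside the $k$-stable region; neither actually forces the first-moment bound to close, and that is the gap.
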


\subsection{A tail bound}
Finally, we need the following inequality.

\begin{lemma}[\cite{Lutz}]\label{Lemma_Lutz}
Let $X_1,\ldots,X_N$ be independent random variables with values in a finite set $\Lambda$.
Assume that $f:\Lambda^N\ra\RR$ is a function, that $\Gamma\subset\Lambda^N$ is an event and that $c,c'>0$ are numbers such that 
the following is true.
	\begin{equation}\label{eqTL}
	\parbox{12cm}{If $x,x'\in\Lambda^N$ are such that there is $k\in\brk N$ such that $x_i=x_i'$ for all $i\neq k$, then
			$$|f(x)-f(x')|\leq\left\{\begin{array}{cl}
				c&\mbox{ if }x\in\Gamma,\\
				c'&\mbox{ if }x\not\in\Gamma.
				\end{array}\right.$$}
	\end{equation}
Then for any $\gamma\in(0,1]$
and any $t>0$ we have
	$$\pr\brk{|f(X_1,\ldots,X_N)-\Erw[f(X_1,\ldots,X_N)]|>t}
		\leq2\exp\bc{-\frac{t^2}{2N(c+\gamma (c'-c))^2}}+\frac{2 N}\gamma\pr\brk{(X_1,\ldots,X_N)\not\in\Gamma}.$$
\end{lemma}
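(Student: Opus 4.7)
The plan is to prove this inequality, which is a version of Warnke's ``method of typical bounded differences,'' via a Doob-martingale argument augmented with a stopping-time trick that converts the one-step Lipschitz constant $c$ from~\eqref{eqTL} into a genuine uniform bound on the increments on a suitable ``typical'' event. Let $M_k=\Erw\brk{f(X_1,\ldots,X_N)\mid X_1,\ldots,X_k}$ for $k=0,\ldots,N$, so that $M_N-M_0=f(X_1,\ldots,X_N)-\Erw\brk{f(X_1,\ldots,X_N)}$; the crude uniform bound $|D_k|=|M_k-M_{k-1}|\leq c'$ always holds by~\eqref{eqTL}.

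To sharpen the increment bound on a typical event, introduce the stopping time
	\[
	\tau=\min\cbc{k\in\brk N:\pr\brk{(X_1,\ldots,X_N)\notin\Gamma\mid X_1,\ldots,X_k}>\gamma}
	\]
(with $\tau=N+1$ if no such $k$ exists) and consider the stopped martingale $\tilde M_k=M_{k\wedge\tau}$. For $k\leq\tau$, the conditional probability that $(X_1,\ldots,X_N)\in\Gamma$ given $X_1,\ldots,X_{k-1}$ is at least $1-\gamma$; splitting the one-step change induced by resampling $X_k$ according to whether the resulting configuration lies in $\Gamma$ or not and applying the two Lipschitz constants of~\eqref{eqTL}, I expect the stopped increments to obey
	\[
	|\tilde M_k-\tilde M_{k-1}|\leq (1-\gamma)c+\gamma c'=c+\gamma(c'-c).
	\]
With these uniformly bounded increments, Azuma--Hoeffding produces the Gaussian tail $2\exp\bc{-t^2/(2N(c+\gamma(c'-c))^2)}$ for $|\tilde M_N-\tilde M_0|$.

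Finally, I control the exceptional event $\cbc{\tau\leq N}$. By the tower property and Markov's inequality,
	\[
	\pr\brk{\tau\leq N}\leq\sum_{k=1}^N\pr\brk{\pr\brk{(X_1,\ldots,X_N)\notin\Gamma\mid X_1,\ldots,X_k}>\gamma}\leq N\gamma^{-1}\pr\brk{(X_1,\ldots,X_N)\notin\Gamma}.
	\]
On the complementary event $\tilde M_N=f(X_1,\ldots,X_N)$, and passing from $\Erw\tilde M_N$ back to $\Erw\brk{f(X_1,\ldots,X_N)}$ costs a further error of the same order, which together with the martingale tail accounts for the prefactor $2$ multiplying $N/\gamma$ in the final estimate. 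The main obstacle is the bookkeeping behind the increment bound: since~\eqref{eqTL} only controls one-step changes of $f$ originating in $\Gamma$ and not $c$-Lipschitz continuity globally, the bound $|\tilde M_k-\tilde M_{k-1}|\leq c+\gamma(c'-c)$ is not immediate from a standard McDiarmid calculation. It has to be derived by combining the one-step Lipschitz property with the fact that, on the event $\cbc{\tau\geq k}$, the conditional distribution of the remaining coordinates places mass at least $1-\gamma$ on $\Gamma$, and weighting the two constants in the correct proportions.
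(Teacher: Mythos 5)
The paper does not reprove this lemma --- it is imported verbatim from Warnke's paper on the method of typical bounded differences --- so there is no in-paper argument to compare against. Your Doob-martingale-plus-stopping-time strategy is the right one and does close out, but the step you yourself flag as ``not immediate'' --- the increment bound $|\tilde M_k-\tilde M_{k-1}|\leq c+\gamma(c'-c)$ --- is stated as an expectation rather than proved, and the way you motivate it (``for $k\leq\tau$, the conditional probability given $X_1,\ldots,X_{k-1}$ of lying in $\Gamma$ is at least $1-\gamma$'') does not by itself give the bound: conditioning further on a particular realisation of $X_k$ can push the conditional probability of $\Gamma^c$ well above $\gamma$, so you cannot simply split ``in $\Gamma$ with weight $1-\gamma$, outside with weight $\gamma$.'' Here is the correct derivation. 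Write $h(a)=\Erw[f\mid X_1,\ldots,X_{k-1},X_k=a]$ and $p_a=\pr[(X_1,\ldots,X_N)\notin\Gamma\mid X_1,\ldots,X_{k-1},X_k=a]$. By~\eqref{eqTL} (and its symmetry in $x,x'$), for any $a,b$ and any realisation $y$ of $(X_{k+1},\ldots,X_N)$ the one-step change is at most $c$ whenever \emph{either} $(x_{1:k-1},a,y)$ \emph{or} $(x_{1:k-1},b,y)$ lies in $\Gamma$, hence $|h(a)-h(b)|\leq c+(c'-c)\,p_b$ for every $a$. Averaging over $b$ distributed as $X_k$ gives
\[
|M_k-M_{k-1}|\;\leq\;\Erw_b\!\brk{|h(X_k)-h(b)|}\;\leq\; c+(c'-c)\,\Erw_b[p_b]\;=\;c+(c'-c)\,\pr\brk{(X_1,\ldots,X_N)\notin\Gamma\mid X_1,\ldots,X_{k-1}},
\]
which is at most $c+\gamma(c'-c)$ on $\{\tau\geq k\}$ because the stopping time ensures $\pr[\cdot\notin\Gamma\mid X_1,\ldots,X_{k-1}]\leq\gamma$ there. (This is the weighted average you gestured at, but it needs the one-sided bound $p_b$ rather than $\min(p_a,p_b)$ so that the spare variable can be integrated out.)

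Two small clean-ups remain. First, the $k=1$ increment requires $\pr[(X_1,\ldots,X_N)\notin\Gamma]\leq\gamma$, which you may assume without loss of generality since otherwise $\tfrac{2N}{\gamma}\pr[\notin\Gamma]>1$ and the conclusion is vacuous. Second, your closing remark about ``passing from $\Erw\tilde M_N$ back to $\Erw[f]$'' is a red herring: $\tilde M_0=M_0=\Erw[f]$ and the stopped martingale is still a martingale, so there is no drift to account for; the only cost of the truncation is the union-bound-plus-Markov term $\pr[\tau\leq N]\leq N\gamma^{-1}\pr[\notin\Gamma]$, and the stated factor $2N/\gamma$ is simply a harmless weakening. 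With these points filled in, the argument is complete and is precisely Warnke's proof.
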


\section{The planted replica model}\label{Sec_planting}

\noindent{\em Throughout this section we assume that $k\geq k_0$ for some large enough constant $k_0$ and that $d<\dc$.}

\medskip
\noindent
In this section we introduce the key tool for the proof of \Thm~\ref{Thm_xlwc}, the {\em planted  replica model}.
This is the probability distribution $\plp$ on triples $(G,\sigma_1,\sigma_2)$ such that $G$ is a graph on $[n]$ with $m$ edges
and $\sigma_1,\sigma_2\in\cS_k(G)$ induced by the following experiment.

\begin{description}
\item[PR1] Sample two maps $\plSIGMA_1,\plSIGMA_2:[n]\to[k]$ independently and uniformly at random subject to the condition
		that $\cF(\plSIGMA_1,\plSIGMA_2)\leq\bink n2-m$.
\item[PR2] Choose a graph $\plG$ on $[n]$ with precisely $m$ edges uniformly at random, subject to the condition that
		both $\plSIGMA_1,\plSIGMA_2$ are proper $k$-colorings.
\end{description}

\noindent
We define
	$$\plp(G,\sigma_1,\sigma_2)=\pr\brk{(\plG,\plSIGMA_1,\plSIGMA_2)=(G,\sigma_1,\sigma_2)}.$$
Clearly, the planted replica model is quite tame so that it should be easy to bring the known techniques from the theory of random graphs to bear.
Indeed, the conditioning in {\bf PR1} is harmless because $\Erw[\cF(\plSIGMA_1,\plSIGMA_2)]\sim(2/k-1/k^2)\bink n2$ while $m=O(n)$.
Hence, by the Chernoff bound we have $\cF(\plSIGMA_1,\plSIGMA_2)\leq\bink n2-m$ \whp\
Moreover, {\bf PR2} just means that we draw $m$ random edges out of the $\bink n2-\cF(\plSIGMA_1,\plSIGMA_2)$ edges of the complete graph
that are bichromatic under both $\plSIGMA_1,\plSIGMA_2$.
In particular, we have the explicit formula
	\begin{align*}
	\plp(G,\sigma_1,\sigma_2)&=\frac1
			{\abs{\cbc{(\tau_1,\tau_2)\in[k]^n\times [k]^n:\cF(\tau_1,\tau_2)\leq\bink n2-m}}}
			{\sum_{\tau_1,\tau_2:\brk n\to\brk k,\,\cF(\tau_1,\tau_2)\leq\bink n2-m}\bink{\bink n2-\cF(\tau_1,\tau_2)}{m}^{-1}}.
	\end{align*}

The purpose of the planted replica model is to get a handle on another experiment, which at first glance seems far less amenable.
The {\em random replica model}  $\prcp$ is a probability distribution on triples $(G,\sigma_1,\sigma_2)$ such that $\sigma_1,\sigma_2\in\cS_k(G)$ as well.
It is induced by the following experiment.

\begin{description}
\item[RR1] Choose a random graph $\G=\gnm$ subject to the condition that $\G$ is $k$-colorable.
\item[RR2] Sample two colorings $\SIGMA_1,\SIGMA_2$ of $\G$ uniformly and independently.
\end{description}

\noindent
Thus,  the random replica model is defined by the formula
	\begin{align}\label{eqrr}
	\prcp(G,\sigma_1,\sigma_2)&=\pr\brk{(\G,\SIGMA_1,\SIGMA_2)=(G,\sigma_1,\sigma_2)}=
		\brk{\bink{\bink n2}m\pr\brk{\chi(\G)\leq k}\Zkc(G)^2}^{-1}.
	\end{align}
Since we assume that $d<\dc$, $\G$ is $k$-colorable \whp\
Hence, the conditioning in {\bf RR1} is innocent.
But this is far from true of the experiment described in {\bf RR2}.
For instance, we have no idea as to how one might implement {\bf RR2} constructively for $d$ anywhere near $\dc$.
In fact, the best current algorithms for finding a single $k$-coloring of $\G$, let alone a random pair, stop working for degrees $d$ about a factor of two
	below $\dc$ (cf.\ \cite{Barriers}).

Yet the main result of this section shows that for $d<\dc$, the ``difficult'' random replica model can be studied by means of the ``simple'' planted replica model.
More precisely, recall that a sequence $(\mu_n)_{n}$  of probability measures is {\em contiguous} with respect to another sequence $(\nu_n)_n$
if $\mu_n,\nu_n$ are defined on the same ground set for all $n$ and if for any sequence $(\cA_n)_n$ of events such that $\lim_{n\to\infty}\nu_n(\cA_n)=0$
we have $\lim_{n\to\infty}\mu_n(\cA_n)=0$.

\begin{proposition}\label{Thm_contig}
If $d<\dc$, then $\rrm$ is contiguous with respect to $\prm$.
\end{proposition}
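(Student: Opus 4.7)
The plan is to use the standard criterion for contiguity: it suffices to exhibit a high-probability event on which the Radon--Nikodym derivative $L = d\rrm/d\prm$ is bounded by a constant. Combining (\ref{eqrr}) with the explicit density for $\prm$ that one reads off \textbf{PR1}--\textbf{PR2}, a direct calculation yields
\[
L(G,\sigma_1,\sigma_2) = \frac{N_{\text{pairs}}}{\pr[\chi(\G) \leq k]} \cdot \frac{\pr[\sigma_1,\sigma_2 \in \cS_k(\gnm)]}{\Zkc(G)^2},
\]
where $N_{\text{pairs}} = |\{(\tau_1,\tau_2) \in ([k]^n)^2 : \cF(\tau_1,\tau_2) \leq \binom n 2 - m\}|$. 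Since $\Erw[\cF(\plSIGMA_1,\plSIGMA_2)] \sim (2/k - 1/k^2)\binom n 2 \gg m$, Chernoff gives $N_{\text{pairs}} = (1-o(1))k^{2n}$, and $\pr[\chi(\G) \leq k] \to 1$ since $d < \dc$.

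The next step is to introduce the good event
\[
\cE_n = \bigl\{\Zkc(G) \geq \Erw[\Zkc(\G)]/K\bigr\} \cap \bigl\{\rho(\sigma_1,\sigma_2) \in \cR_{n,k}(\omega)\bigr\}
\]
and verify that $L \leq CK^2$ uniformly on $\cE_n$. Indeed, for $\rho \in \cR_{n,k}(\omega)$ one has $\cF(\sigma_1,\sigma_2) = (2/k - 1/k^2)\binom n 2 + O(\sqrt{\omega n})$, whence $\pr[\sigma_1,\sigma_2 \in \cS_k(\gnm)] = (1-1/k)^{2m}(1+o(1))$; together with Lemma~\ref{Lemma_firstMoment} this makes the numerator of $L$ of order $\Theta(\Erw[\Zkc(\G)]^2)$, and the lower bound on $\Zkc(G)$ delivers the claimed uniform bound.

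The heart of the argument is to show that $\rrm(\cE_n^c) \to 0$ as we first let $n \to \infty$ and then $K, \omega \to \infty$. Since the marginal of $G$ under $\rrm$ is $\gnm$ conditioned on the \whp\ event $\{\chi(\G) \leq k\}$, Theorem~\ref{Thm_Z} handles the first part of $\cE_n$ directly. For the overlap concentration, on the event $\Zkc(\G) \geq \Erw[\Zkc(\G)]/K$ one has the quenched bound
\[
\bck{\vecone\cbc{\rho(\SIGMA_1,\SIGMA_2) \notin \cR_{n,k}(\omega)}}_{\G,k} \leq \frac{K^2}{\Erw[\Zkc(\G)]^2}\sum_{\rho \notin \cR_{n,k}(\omega)} Z_\rho^\tensor(\G),
\]
and I would take expectations and split the $\rho$-sum according to the separability/stability classification from Section~\ref{Sec_secondMoment}: inseparable pairs are negligible by Lemma~\ref{Lemma_Danny}(1); $0$-stable overlaps with $\norm{\rho - \bar\rho}_2 > \eta$ are controlled by Lemma~\ref{Lemma_Danny}(3) (or by Lemma~\ref{Lemma_f} in the Achlioptas--Naor range $d < 2(k-1)\ln(k-1)$); $s$-stable overlaps with $1 \leq s \leq k-1$ are handled by Lemma~\ref{Lemma_Danny}(2); and the $k$-stable contribution identifies with the cluster $\cC(G,\SIGMA_1)$ of (\ref{eqcluster}) and is absorbed by Lemma~\ref{Lemma_clusterSize}. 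Sharpening the concentration to the $\sqrt{\omega/n}$-window $\cR_{n,k}(\omega)$ uses a local quadratic expansion of the rate function in the spirit of Lemma~\ref{Lemma_phiFirstMoment}.

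Contiguity then drops out of the standard decomposition
\[
\rrm(\cA_n) = \Erw_{\prm}[\vecone_{\cA_n} L] \leq CK^2 \prm(\cA_n) + \rrm(\cE_n^c):
\]
given $\eps > 0$, first choose $K,\omega$ so that $\rrm(\cE_n^c) < \eps/2$ for all large $n$, and then $\prm(\cA_n) \to 0$ makes the first term $< \eps/2$ for $n$ sufficiently large. The main obstacle is the overlap concentration in the range $2(k-1)\ln(k-1) \leq d < \dc$, where the classical Achlioptas--Naor second moment breaks down and one must combine the separability/stability decomposition of \cite{Danny} with the cluster-size estimate of \cite{Cond}; this is precisely where the restriction $k \geq k_0$ enters.
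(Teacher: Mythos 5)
Your strategy — bound the likelihood ratio $L=d\rrm/d\prm$ on a high‐probability event and conclude via the standard decomposition — is the same mechanism the paper uses; the paper just packages it as a proof by contradiction with a window size $\omega(n)=\ln\ln(1/\plp[\cA_n'])$ chosen adaptively so that it never has to pin down an explicit rate of overlap concentration under $\rrm$. However, your good event is stated with $\rho(\sigma_1,\sigma_2)\in\cR_{n,k}(\omega)$, and this is a genuine error: $\cR_{n,k}(\omega)$ (as defined just before Lemma~\ref{Lemma_f}) only constrains the \emph{row and column marginals} of the overlap matrix to be balanced; it says nothing about the entries $\rho_{ij}$ themselves. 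For example the diagonal overlap $\rho(\sigma,\sigma)$ of a balanced coloring with itself lies in $\cR_{n,k}(\omega)$, yet for it $\cF(\sigma,\sigma)=\cF(\sigma)\approx\tfrac1k\binom n2\ne(2/k-1/k^2)\binom n2$, so $\pr[\sigma,\sigma\in\cS_k(\G)]\approx(1-1/k)^{m}\gg(1-1/k)^{2m}$ and $L$ is exponentially large, not $O(K^2)$. The set you actually need is the $\sqrt{\omega/n}$–ball $\cB_n=\{\norm{\rho-\bar\rho}_2\le\sqrt{\omega/n}\}$, as in the paper's proof. Even on $\cB_n$ your estimate $\cF=(2/k-1/k^2)\binom n2+O(\sqrt{\omega n})$ should read $O(\omega n)$, so the ratio bound is $L\le C K^2 e^{O(\omega)}$ rather than $CK^2$; this is harmless for fixed $\omega$ but worth getting right.

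With the event corrected, the remaining obstacle is exactly the one you flag in passing: you need $\rrm\big(\norm{\rho(\SIGMA_1,\SIGMA_2)-\bar\rho}_2>\sqrt{\omega/n}\big)\to 0$ as $n\to\infty$ for \emph{fixed} $\omega$, and this does not follow from the statement of Lemma~\ref{Lemma_Z2} by Markov — that lemma only yields concentration at a fixed, $n$‐independent scale, whereas you need concentration at the vanishing scale $\sqrt{\omega/n}$. Your quenched bound $\bck{\vecone\{\rho\notin\cB_n\}}_\G\le K^2\,\Erw[\Zkc]^{-2}\sum_{\rho\notin\cB_n}Z_\rho^\tensor(\G)$ (on $\{\Zkc\ge\Erw[\Zkc]/K\}$) is the right starting point, but you then have to redo the second‐moment calculation with the indicator $\vecone\{\norm{\rho-\bar\rho}_2>\sqrt{\omega/n}\}$ in place of $\norm{\rho-\bar\rho}_2$, separately controlling the imbalanced‐marginal contribution via Claim~\ref{Cor_phiFirstMoment}, the local Gaussian window via the Hessian bound (\ref{eqLemma_Z2_11}), and the far‐from‐$\bar\rho$ part via Lemmas~\ref{Lemma_Danny}--\ref{Lemma_clusterSize}, to conclude $\Erw[\sum_{\rho\notin\cB_n}Z^\tensor_\rho]\le e^{-\Omega(\omega)}\Erw[\Zkc]^2$. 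That is not a citation but a re-derivation, and it is precisely the work the paper's contradiction/adaptive‐$\omega$ device is designed to avoid having to make explicit. So the plan is sound, but as written it has two concrete gaps: the wrong overlap event, and the claim that the sharp overlap concentration follows from Lemma~\ref{Lemma_Z2} without further argument.
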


The rest of this section is devoted to the proof of \Prop~\ref{Thm_contig}.
A key step is to study the distribution of the overlap of two random $k$-colorings $\SIGMA_1,\SIGMA_2$ of $\G$,
whose definition we recall from \Sec~\ref{Sec_secondMoment}.

\begin{lemma}
\label{Lemma_Z2}
Assume that $d<\dc$.
Then $\Erw[\bck{\norm{\rho(\SIGMA_1,\SIGMA_2)-\bar\rho}_2}_{\G}]=o(1).$
\end{lemma}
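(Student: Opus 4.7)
The strategy is to compute
\begin{align*}
\bck{\norm{\rho(\SIGMA_1,\SIGMA_2)-\bar\rho}_2}_\G=\frac{1}{\Zkc(\G)^2}\sum_{\rho\in\cP_n(\brk k^2)}\norm{\rho-\bar\rho}_2\,Z_\rho^\tensor(\G)
\end{align*}
by combining the sharp concentration of $\Zkc(\G)$ from \Thm~\ref{Thm_Z} with the second-moment estimates of \Sec~\ref{Sec_secondMoment}. Fix $\eta>0$: the contribution of $\rho$ with $\norm{\rho-\bar\rho}_2\leq\eta$ is trivially bounded by $\eta$, so after sending $\eta\to 0$ at the end it suffices to show that the mass assigned to $\{\rho:\norm{\rho-\bar\rho}_2>\eta\}$ is $o(1)$.

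Pick a large constant $\omega_1$ and condition on $\cE=\{|\ln\Zkc(\G)-\ln\Erw[\Zkc(\G)]|\leq\omega_1\}$; by \Thm~\ref{Thm_Z}, $\pr[\cE]\geq 1-\eps(\omega_1)$ asymptotically, with $\eps(\omega_1)\to 0$. Off $\cE$, the crude bound $\bck{\norm{\rho-\bar\rho}_2}_\G\leq 2$ costs only $O(\eps(\omega_1))$ in expectation. On $\cE$, $\Zkc(\G)^{-2}\leq\eul^{2\omega_1}\Erw[\Zkc(\G)]^{-2}=\Theta(\eul^{2\omega_1}\exp(-nf(\bar\rho)))$ by \Lem~\ref{Lemma_firstMoment}, since $f(\bar\rho)=2\ln k+d\ln(1-1/k)$. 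A routine reduction via \Lem~\ref{Lemma_phiFirstMoment} lets me restrict to $\rho\in\cR_{n,k}(\omega_2)$ for a slowly growing $\omega_2\to\infty$, because pairs with a marginal far from $\bar\alpha$ are exponentially suppressed. In the regime $d<2(k-1)\ln(k-1)$, bound~(\ref{eqAchNaor}) of \Lem~\ref{Lemma_f} gives $f(\rho)\leq f(\bar\rho)-\delta$ on the restricted range, yielding an $\exp(-\Omega(n))$ contribution. For $2(k-1)\ln(k-1)\leq d<\dc$ I appeal to \Lem~\ref{Lemma_Danny}: part~(i) excludes inseparable pairs, while parts~(ii) and (iii) deliver $f(\rho)<f(\bar\rho)-\Omega(1)$ for separable $\rho$ that are $s$-stable with $1\leq s\leq k-1$ and for $0$-stable $\rho$ with $\norm{\rho-\bar\rho}_2>\eta$, respectively.

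The main obstacle is the $k$-stable case, in which $\rho$ is (up to permuting columns) close to $k^{-1}$ times a permutation matrix and $f(\rho)$ is essentially indistinguishable from $f(\bar\rho)$, so the second-moment bound is useless. I circumvent this by exploiting the combinatorial identification~(\ref{eqcluster}): a pair $(\sigma_1,\sigma_2)\in\cS_k(\G)^2$ has $k$-stable overlap iff $\sigma_2\in\cC(\G,\sigma_1)$, whence
\begin{align*}
\frac1{\Zkc(\G)^2}\sum_{\rho\text{ $k$-stable}}\norm{\rho-\bar\rho}_2\,Z_\rho^\tensor(\G)\leq\frac{2\bck{\abs{\cC(\G,\SIGMA)}}_{\G,k}}{\Zkc(\G)}.
\end{align*}
\Lem~\ref{Lemma_clusterSize} bounds the numerator by $\omega_2^{-1}\Erw[\Zkc(\G)]$ \whp, and on $\cE$ the denominator is at least $\eul^{-\omega_1}\Erw[\Zkc(\G)]$, so this contribution is $O(\eul^{\omega_1}/\omega_2)=o(1)$ for fixed $\omega_1$ as $\omega_2\to\infty$. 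Putting everything together yields $\Erw\brk{\bck{\norm{\rho-\bar\rho}_2}_\G}\leq\eta+O(\eps(\omega_1))+o(1)$; first letting $\omega_1\to\infty$ and then $\eta\to 0$ completes the proof.
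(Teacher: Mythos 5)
Your proof is correct and follows essentially the same route as the paper: the same reduction to balanced colorings via Claim~\ref{Cor_phiFirstMoment}, the same appeal to \Thm~\ref{Thm_Z} to control $\Zkc(\G)^{-2}$, the same use of \Lem~\ref{Lemma_f} for $d<2(k-1)\ln(k-1)$ and of \Lem~\ref{Lemma_Danny} (inseparability, $1\leq s\leq k-1$, $0$-stable far) for larger $d$, and the same observation that the $k$-stable mass equals $\bck{|\cC(\G,\SIGMA)|}_{\G}/\Zkc(\G)$ and is killed by \Lem~\ref{Lemma_clusterSize}.

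The one genuine difference is the treatment of the ball $\{\norm{\rho-\bar\rho}_2\leq\eta\}$. You bound its contribution trivially by $\eta$ and send $\eta\to0$ at the very end; the paper instead uses the local Hessian bound $D^2f(\rho)\preceq-2\id$ together with a Gaussian integral to show that this contribution is $O(n^{-1/2})\Erw[\Zkc(\G)]^2$, yielding the quantitatively sharper conclusion that $\Lambda=\Zkc(\G)^2\bck{\norm{\rho-\bar\rho}_2}_{\G}\leq\sqrt{\ln\ln n/n}\,\Erw[\Zkc(\G)]^2$ with high probability. For the lemma \emph{exactly as stated} your softer argument suffices. But be aware that the stronger whp rate (rather than a bare $o(1)$ in expectation) is what the proof of \Prop~\ref{Thm_contig} implicitly relies on when it chooses the threshold $\sqrt{\omega(n)/n}$ for a slowly growing $\omega(n)$; your $\eta$-limiting version alone would not plug into that step without reinstating the Gaussian-integral estimate.
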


In words, \Lem~\ref{Lemma_Z2} asserts that the expectation over the choice of the random graph $\G$
	(the outer $\Erw$) of the average $\ell_2$-distance of the overlap of two randomly chosen $k$-colorings of $\G$
	from $\bar\rho$ goes to $0$ as $n\to\infty$.
To prove this statement the following intermediate step is required;
	we recall the $\alpha\bc\nix$ notation from \Sec~\ref{Sec_firstMoment}.
The $d<2(k-1)\ln(k-1)$ case of \Lem~\ref{Lemma_Z2} was previously proved in~\cite{Prasad} by way of the second moment analysis from~\cite{AchNaor}.
As it turns out, the regime $2(k-1)\ln(k-1)<d<\dc$ requires a somewhat more sophisticated argument.
In any case, for the sake of completeness we give a full prove of \Lem~\ref{Lemma_Z2}, including the $d<2(k-1)\ln(k-1)$
	(which adds merely three lines to the argument).
Similarly, in~\cite{Prasad} the following claim was established in the case $d<2(k-1)\ln(k-1)$.

\begin{claim}\label{Cor_phiFirstMoment}
Suppose that $d<\dc$ and that $\omega=\omega(n)$ is such that $\lim_{n\to\infty}\omega(n)=\infty$ but $\omega=o(n)$.
Then \whp\ $\G$ is such that
	$$\bck{\vecone\cbc{\norm{\alpha(\SIGMA)-\bar\alpha}_2>\sqrt{\omega/n}}}_{\G}\leq\exp(-\Omega(\omega)).$$
\end{claim}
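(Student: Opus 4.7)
The plan is to control the numerator $\sum_{\alpha:\|\alpha-\bar\alpha\|_2^2>\omega/n}Z_\alpha(\G)$ of the bracket by a first moment argument, and to bound the denominator $Z_k(\G)$ from below via \Thm~\ref{Thm_Z}. The key analytic input is that the rate function $\varphi(\alpha)=H(\alpha)+\frac{d}{2}\ln(1-\|\alpha\|_2^2)$ from \Lem~\ref{Lemma_phiFirstMoment} attains its unique strict maximum on $\cP([k])$ at $\bar\alpha$, since both the entropy and $-\|\alpha\|_2^2$ are uniquely maximized at the uniform distribution. Taylor expanding at this critical point with negative definite Hessian yields a constant $c=c(k,d)>0$ for which $\varphi(\alpha)\leq\varphi(\bar\alpha)-c\|\alpha-\bar\alpha\|_2^2$ holds uniformly for $\|\alpha-\bar\alpha\|_2\leq k^{-3}$; continuity and compactness of $\cP([k])$ furnish a $\delta=\delta(k,d)>0$ such that $\varphi(\alpha)\leq\varphi(\bar\alpha)-\delta$ for all $\alpha$ outside this ball.

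With these quadratic and uniform bounds in hand, I would split the sum into a \emph{tail} part over $\|\alpha-\bar\alpha\|_2>k^{-3}$ and a \emph{bulk} part over $\sqrt{\omega/n}<\|\alpha-\bar\alpha\|_2\leq k^{-3}$. The tail is handled by the uniform estimate $\Erw[Z_\alpha(\G)]=O(\exp(n\varphi(\alpha)))$ from \Lem~\ref{Lemma_phiFirstMoment} together with $|\cP_n([k])|\leq(n+1)^k$, which, via \Lem~\ref{Lemma_firstMoment}, yields a contribution of size $\exp(-\Omega(n))\Erw[Z_k(\G)]$; since $\omega=o(n)$, this is dominated by $\exp(-\omega)\Erw[Z_k(\G)]$. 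For the bulk, the sharper asymptotic $\Erw[Z_\alpha(\G)]=\Theta(n^{(1-k)/2}\exp(n\varphi(\alpha)))$ reduces matters to estimating the Gaussian lattice sum $\sum_\alpha\exp(-cn\|\alpha-\bar\alpha\|_2^2)$ on the $(k-1)$-dimensional simplex $\cP_n([k])$. The substitution $y=\sqrt n(\alpha-\bar\alpha)$ turns this into a Riemann-type sum on a lattice of spacing $1/\sqrt n$ in $\RR^{k-1}$, which is bounded by $O(n^{(k-1)/2}\omega^{(k-2)/2}\exp(-c\omega))$. The factor $n^{(k-1)/2}$ exactly cancels the $n^{(1-k)/2}$ prefactor, so the bulk contributes at most $\mathrm{poly}(\omega)\exp(-c\omega)\Erw[Z_k(\G)]=\exp(-\Omega(\omega))\Erw[Z_k(\G)]$ as well.

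Combining the two pieces yields
$$\Erw\sum_{\alpha:\|\alpha-\bar\alpha\|_2^2>\omega/n}Z_\alpha(\G)\leq\exp(-\Omega(\omega))\Erw[Z_k(\G)],$$
so Markov's inequality implies that with probability $1-\exp(-\Omega(\omega))$ the numerator itself is at most $\exp(-\Omega(\omega))\Erw[Z_k(\G)]$. Meanwhile, \Thm~\ref{Thm_Z} provides, for every fixed $\eps>0$, a constant $\omega_0$ such that $Z_k(\G)\geq\exp(-\omega_0)\Erw[Z_k(\G)]$ with probability at least $1-\eps$ for all large $n$. Taking the ratio, then $n\to\infty$, then $\eps\to0$, delivers a bracket of size at most $\exp(-\Omega(\omega))$ with probability tending to one.

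The main obstacle is bookkeeping the Gaussian lattice sum carefully enough so that the polynomial prefactors cancel exactly and one does not lose more than a $\mathrm{poly}(\omega)$ factor inside the exponential decay rate. Beyond that, the argument reduces to routine first-moment estimates and a single invocation of \Thm~\ref{Thm_Z}; in particular it requires none of the second-moment material of \Sec~\ref{Sec_secondMoment}, which explains why the same proof works uniformly throughout the range $d<\dc$.
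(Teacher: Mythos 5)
Your proposal is correct and follows essentially the same route as the paper: bound the numerator $\sum_{\alpha:\|\alpha-\bar\alpha\|_2>\sqrt{\omega/n}}\Erw[Z_\alpha(\G)]$ by the quadratic decay of $\varphi$ around $\bar\alpha$ together with a Gaussian lattice sum that absorbs the $n^{(1-k)/2}$ prefactor, apply Markov, and lower-bound the denominator via \Thm~\ref{Thm_Z}. The paper compresses your bulk/tail split (you cut at $k^{-3}$; the paper cuts at $1/\ln n$ and at $\omega\lessgtr\ln^2 n$ to control the polynomial prefactors) by invoking the global estimate $D^2\varphi\preceq-2\mathrm{id}$ rather than your local-quadratic-plus-compactness argument, but the two are interchangeable; your exponent $\omega^{(k-2)/2}$ in the Gaussian tail should be $\omega^{(k-3)/2}$, a harmless slip that does not affect the $\exp(-\Omega(\omega))$ conclusion.
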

\begin{proof}
We combine \Thm~\ref{Thm_Z} with a standard ``first moment'' estimate similar to the proof of~\cite[\Lem~5.4]{Prasad}.
The entropy function $\alpha\in\cP(\brk k)\mapsto H(\alpha)=-\sum_{i=1}^k\alpha_i\ln\alpha_i$ is  concave and attains its global maximum at $\bar\alpha$.
In fact, the Hessian of $\alpha\mapsto H(\alpha)$ satisfies $D^2H(\alpha)\preceq-2\id$.
Moreover, since $\alpha\mapsto\norm\alpha_2^2$ is convex, $\alpha\mapsto\frac{d}2\ln(1-\norm\alpha_2^2)$ is concave and attains
is global maximum at $\bar\alpha$ as well.
Hence, letting $\varphi$ denote the function from \Lem~\ref{Lemma_phiFirstMoment}, we find $D^2\varphi(\alpha)\preceq-2\id$.
Therefore, we obtain from \Lem~\ref{Lemma_phiFirstMoment} that
	\begin{equation}\label{eqCor_phiFirstMoment1}
	\Erw[Z_\alpha(\G)]\leq \exp(n(\varphi(\bar\alpha)-\norm{\alpha-\bar\alpha}_2^2))\cdot\begin{cases}
		O(1)&\mbox{ if }\norm{\alpha-\bar\alpha}_2> 1/\ln n,\\
		O(n^{(1-k)/2})&\mbox{ otherwise.}
		\end{cases}
	\end{equation}
Further, letting $$Z'(\G)=\sum_{\alpha\in\cP_n(\brk k):\norm{\alpha-\bar\alpha}_2>\sqrt{\omega/n}}Z_\alpha(\G)$$
and treating the cases $\omega\leq \ln^2 n$ and $\omega\geq\ln^2 n$ separetely, we obtain from~(\ref{eqCor_phiFirstMoment1}) that
	\begin{equation}\label{eqCor_phiFirstMoment2}
	\Erw[Z'(\G)]\leq \exp(-\Omega(\omega))\exp(n(\varphi(\bar\alpha)).
	\end{equation}
Since \Lem~\ref{Lemma_firstMoment} shows that $\Erw[\Zkc(\G)]=\Theta(k^n(1-1/k)^m)=\exp(n\varphi(\bar\alpha))$,
(\ref{eqCor_phiFirstMoment2}) yields $\Erw[Z'(\G)]=\exp(-\Omega(\omega))\Erw[\Zkc(\G)]$.
Hence, by Markov's inequality
	\begin{equation}\label{eqCor_phiFirstMoment3}
	\pr\brk{Z'(\G)\leq\exp(-\Omega(\omega))\Erw[\Zkc(\G)]}\geq1-\exp(-\Omega(\omega)).
	\end{equation}
Finally, since $\bck{\norm{\alpha(\SIGMA)-\bar\alpha}_2>\sqrt{\omega/n}}_{\G}=Z'(\G)/\Zkc(\G)$
and because $\Zkc(\G)\geq\Erw[\Zkc]/\omega$ \whp\ by \Thm~\ref{Thm_Z},
 the assertion follows from~(\ref{eqCor_phiFirstMoment3}).
\end{proof}

\begin{proof}[Proof of \Lem~\ref{Lemma_Z2}]
We bound
	$$\Lambda=\sum_{\sigma_1,\sigma_2\in\cS_k(\Gnm)}\norm{\rho(\sigma_1,\sigma_2)-\bar\rho}_2
		=\Zkc(\Gnm)^2\bck{\norm{\rho(\SIGMA_1,\SIGMA_2)-\bar\rho}_2}_{\Gnm}$$
by a sum of three different terms.
First, letting, say, $\omega(n)=\ln n$, we set
	\begin{align*}
	\Lambda_1&=
		\sum_{\sigma_1,\sigma_2\in\cS_k(\Gnm)}\vecone\cbc{\norm{\alpha(\sigma_1)-\bar\alpha}_2>\sqrt{\omega/n}}
			=\Zkc(\Gnm)^2\bck{\norm{\alpha(\SIGMA)-\bar\alpha}_2>\sqrt{\omega/n}}_{\Gnm}.
	\end{align*}
To define the other two, let $\cS_k'(\Gnm)$ be the set of all $\sigma\in\cS_k(\Gnm)$ such that $\norm{\alpha(\sigma)-\bar\alpha}_2\leq\sqrt{\omega/n}$.
Let $\eta>0$ be a small but $n$-independent number and let
	\begin{align*}
	\Lambda_2&=
		\sum_{\sigma_1,\sigma_2\in\cS_k'(\Gnm)}\vecone\cbc{\norm{\rho(\sigma_1,\sigma_2)-\bar\rho}_2\leq\eta}
				\norm{\rho(\sigma_1,\sigma_2)-\bar\rho}_2,&
	\Lambda_3&=		\sum_{\sigma_1,\sigma_2\in\cS_k'(\Gnm)}\vecone\cbc{\norm{\rho(\sigma_1,\sigma_2)-\bar\rho}_2>\eta}.
	\end{align*}
Since $\norm{\rho(\sigma_1,\sigma_2)-\bar\rho}_2\leq2$ for all $\sigma_1,\sigma_2$, we have
	\begin{equation}\label{eqLambda_dec}
	\Lambda\leq4(\Lambda_1+\Lambda_2)+\Lambda_3.
	\end{equation}

Hence, we need to bound $\Lambda_1,\Lambda_2,\Lambda_3$.
With respect to $\Lambda_1$, Claim~\ref{Cor_phiFirstMoment} implies that
	\begin{align}\label{eqLemma_Z2_2}
	\pr\brk{\Lambda_1\leq\exp(-\Omega(\sqrt n))\Zkc(\Gnm)^2}&=1-o(1).
	\end{align}
To estimate $\Lambda_2$,  we let $f$ denote the function from \Lem~\ref{Lemma_f}.
Observe that $Df(\bar\rho)=0$, because $\bar\rho$  maximises the entropy and minimises the $\ell_2$-norm.
Further, a straightforward calculation reveals that for any $i,j,i',j'\in[k],\ (i,j)\neq(i',j')$,
	\begin{align*}
	\frac{\partial^2 f(\rho)}{\partial\rho_{ij}^2}&=-\frac1{\rho_{ij}}+\frac d{1-2/k+\norm{\rho}_2^2}-\frac{2d\rho_{ij}^2}{(1-2/k+\norm{\rho}_2^2)^2},&
	\frac{\partial^2 f(\rho)}{\partial\rho_{ij}\partial\rho_{i'j'}}&=-\frac{2d\rho_{ij}\rho_{i'j'}}{(1-2/k+\norm{\rho}_2^2)^2}.
	\end{align*}
Consequenctly, choosing, say, $\eta<k^{-4}$,  ensures that the Hessian satisfies
	\begin{equation}\label{eqLemma_Z2_11}
	D^2f(\rho)\preceq-2\id\qquad\mbox{ for all $\rho$ such that $\norm{\rho-\bar\rho}_2^2\leq\eta$.}
	\end{equation}
Therefore, \Lem~\ref{Lemma_f}
	yields
	\begin{align}
	\Erw[\Lambda_2]&\leq
		\sum_{\rho\in\cR_{n,k}(\eta)}\norm{\rho-\bar\rho}_2\Erw[Z_\rho^\tensor(\Gnm)]\nonumber\\
		&\leq O(n^{(1-k^2)/2})\exp(nf(\bar\rho))
			\sum_{\rho\in\cR_{n,k}(\eta)}\norm{\rho-\bar\rho}_2\exp(n(f(\rho)-f(\bar\rho)))\nonumber\\
		&\leq O(n^{(1-k^2)/2})\exp(nf(\bar\rho))\sum_{\rho\in\cR_{n,k}(\eta)}\norm{\rho-\bar\rho}_2\exp(-nk^{-2}\norm{\rho-\bar\rho}^2)
				&\mbox{[by~(\ref{eqLemma_Z2_11})]}.
				\label{eqLemma_Z2_11_a}
	\end{align}
Further, since $\rho_{kk}=1-\sum_{(i,j)\neq(k,k)}\rho_{ij}$ for any $\rho\in\cR_{n,k}(\eta)$, substituting $x=\sqrt n\rho$ in~(\ref{eqLemma_Z2_11_a}) yields
	\begin{align}			\label{eqLemma_Z2_11_b}
	\Erw[\Lambda_2]&\leq O(n^{(1-k^2)/2})\exp(nf(\bar\rho)) \int_{\RR^{k^2-1}}\frac{\norm{x}_2}{\sqrt n}\exp(-k^{-2}\norm{x}_2^2)dx
		=O(n^{-1/2})\exp(nf(\bar\rho)).
	\end{align}
Since $f(\bar\rho)=2\ln k+d\ln(1-1/k)$, \Lem~\ref{Lemma_firstMoment} yields
	\begin{align}\label{eqfbarrho}
	\exp(n f(\bar\rho))&\leq O(\Erw[\Zkc(\Gnm)]^2).
	\end{align}
Therefore,  (\ref{eqLemma_Z2_11_b}) entails that
	\begin{align}\label{eqLambda2}
	\Erw[\Lambda_2]&\leq O(n^{-1/2})\Erw[\Zkc(\Gnm)]^2.
	\end{align}

To bound $\Lambda_3$, we consider two separate cases.
The first case is that $d\leq2(k-1)\ln(k-1)$.
Then \Lem~\ref{Lemma_f} and~(\ref{eqfbarrho}) yield
	\begin{equation}\label{eqLambda3_case1}
	\Erw[\Lambda_3]\leq\exp(nf(\bar\rho)-\Omega(n))\leq\exp(-\Omega(n))\Erw[\Zkc(\Gnm)]^2.
	\end{equation}
The second case is that $2(k-1)\ln(k-1)\leq d<\dc$.
We introduce
	\begin{align*}
	\Lambda_{31}&=\sum_{\sigma_1,\sigma_2\in\cS_k'(\Gnm)}\vecone\cbc{\sigma_1\mbox{ fails to be separable}},\\
	\Lambda_{32}&=\sum_{\sigma_1,\sigma_2\in\cS_k'(\Gnm)}\vecone\cbc{\rho(\sigma_1,\sigma_2)\mbox{ is $s$-stable for some $1\leq s\leq k$}},\\
	\Lambda_{33}&=\sum_{\sigma_1,\sigma_2}\vecone\cbc{\rho(\sigma_1,\sigma_2)\mbox{ is $0$-stable and }
		\norm{\rho(\sigma_1,\sigma_2)-\bar\rho}_2>\eta},\\
	\Lambda_{34}&=\sum_{\sigma_1,\sigma_2\in\cS_k'(\Gnm)}\vecone\cbc{\rho(\sigma_1,\sigma_2)\mbox{ is $k$-stable}},
	\end{align*}
so that
	\begin{equation}\label{eqLemma_Z2_1}
	\Lambda_3\leq\Lambda_{31}+\Lambda_{32}+\Lambda_{33}+\Lambda_{34}.
	\end{equation}
By the first part of \Lem~\ref{Lemma_Danny} and Markov's inequality,
	\begin{align}\label{eqLemma_Z2_Lambda31}
	\pr\brk{\Lambda_{31}\leq\exp(-\Omega(n))\Zkc(\Gnm)\Erw[\Zkc(\Gnm)]}&=1-o(1).
	\end{align}
Further, combining \Lem~\ref{Lemma_f} with the second part of \Lem~\ref{Lemma_Danny}, we obtain
	\begin{align}\label{eqLemma_Z2_Lambda32}
	\pr\brk{\Lambda_{32}\leq\exp(n f(\bar\rho)-\Omega(n))}&=1-o(1).
	\end{align}	
Addionally, \Lem~\ref{Lemma_f} and the third part of \Lem~\ref{Lemma_Danny} yield
	\begin{align}\label{eqLemma_Z2_Lambda33}
	\pr\brk{\Lambda_{33}\leq\exp(n f(\bar\rho)-\Omega(n))}&=1-o(1).
	\end{align}	
Moreover, \Lem~\ref{Lemma_clusterSize} entails that
	\begin{align}\label{eqLemma_Z2_Lambda34}
	\pr\brk{\Lambda_{34}\leq\exp(-\Omega(n))\Zkc(\Gnm)\Erw[\Zkc(\Gnm)]}&=1-o(1).
	\end{align}	
Finally, combining (\ref{eqLemma_Z2_Lambda31})--(\ref{eqLemma_Z2_Lambda34}) with (\ref{eqfbarrho}) and~(\ref{eqLemma_Z2_1})
and using Markov's inequality once more, we obtain
	\begin{align}\label{eqLemma_Z2_Lambda3_case2}
	\pr\brk{\Lambda_{3}\leq\exp(-\Omega(n))\Erw[\Zkc(\Gnm)]^2}&=1-o(1).
	\end{align}	
	
In summary, combining (\ref{eqLambda_dec}), (\ref{eqLemma_Z2_2}), (\ref{eqLambda2}), (\ref{eqLambda3_case1}) and~(\ref{eqLemma_Z2_Lambda3_case2})
and setting, say, $\omega=\omega(n)=\ln\ln n$,
we find that  
	\begin{align}\label{eqLemma_Z2_667}
	\pr\brk{\Lambda\leq \sqrt{\omega/n}\,\Erw[\Zkc(\Gnm)]^2}&=1-o(1).
	\end{align}
Since $\Lambda=\Zkc(\Gnm)^2\bck{\norm{\rho(\SIGMA_1,\SIGMA_2)-\bar\rho}_2}_{\Gnm}$
and as $\Zkc(\Gnm)\geq \Erw[\Zkc(\Gnm)]/\omega$ \whp\ by \Thm~\ref{Thm_Z},
the assertion follows 
from~(\ref{eqLemma_Z2_667}).
\end{proof}

\Lem~\ref{Lemma_Z2} puts us in a position to prove \Prop~\ref{Thm_contig} by
extending the argument that was used to ``plant'' single $k$-colorings in~\cite[\Sec~2]{Silent} to the current setting of ``planting'' pairs of $k$-colorings.

\begin{proof}[Proof of \Prop~\ref{Thm_contig}]
Assume for contradiction that $(\cA_n')_{n\geq1}$ is a sequence of events such that for some fixed number $\eps>0$ we have
	\beq\label{eqThm_cont0'}
	\lim_{n\ra\infty}\plp\brk{\cA_n'}=0\quad\mbox{while}\quad\limsup_{n\ra\infty}\prcp\brk{\cA_n'}>2\eps.
	\eeq
Let
	$\omega(n)=\ln\ln1/\plp\brk{\cA_n'}.$
Then $\omega=\omega(n)\to\infty$.
Let $\cB_n$ be the set of all pairs $(\sigma_1,\sigma_2)$ of maps $[n]\to[k]$ such that
	$\norm{\rho(\sigma_1,\sigma_2)-\bar\rho}_2\leq\sqrt{\omega/n}$
and define $$\cA_n=\cbc{(G,\sigma_1,\sigma_2)\in\cA_n':(\sigma_1,\sigma_2)\in\cB_n}.$$
Then \Lem~\ref{Lemma_Z2} and~(\ref{eqThm_cont0'}) imply that
	\begin{equation}\label{eqThm_cont_overlap}
	\lim_{n\ra\infty}\plp\brk{\cA_n}=0\quad\mbox{while}\quad\limsup_{n\ra\infty}\prcp\brk{\cA_n}>\eps.
	\end{equation}
Furthermore,
	\begin{equation}\label{eq_planting0}
	\omega(n)\sim\ln\ln\bc{1/\plp\brk{\cA_n}}\to\infty.
	\end{equation}

For $\sigma_1,\sigma_2:\brk n\to\brk k$
let $\G(n,m|\sigma_1,\sigma_2)$ be the random graph $\gnm$ conditional on the event that $\sigma_1,\sigma_2$ are $k$-colorings.
That is, $\G(n,m|\sigma_1,\sigma_2)$ consists of $m$ random edges that are bichromatic under $\sigma_1,\sigma_2$.
Then 
	\begin{eqnarray}
	\Erw[\Zkc(\gnm)^2\vecone\cbc{\cA_n}]&=&
			\sum_{(\sigma_1,\sigma_2)\in\cB_n}\pr\brk{\sigma_1,\sigma_2\in\cS_k(\gnm), (\gnm,\sigma_1,\sigma_2)\in\cA_n}\nonumber\\
		&=&\sum_{(\sigma_1,\sigma_2)\in\cB_n}\pr\brk{(\gnm,\sigma_1,\sigma_2)\in\cA_n|\sigma_1,\sigma_2\in\cS_k(\gnm)}
				\pr\brk{\sigma_1,\sigma_2\in\cS_k(\gnm)}\nonumber\\
		&=&\sum_{(\sigma_1,\sigma_2)\in\cB_n}
				\pr\brk{\G(n,m|\sigma_1,\sigma_2)\in\cA_n}\cdot\pr\brk{\sigma_1,\sigma_2\in\cS_k(\gnm)}.\label{eqThm_cont1}
	\end{eqnarray}
Letting
	$q_n=\max\cbc{\pr\brk{\sigma_1,\sigma_2\in\cS_k(\gnm)}:(\sigma_1,\sigma_2)\in\cB_n}$,
we obtain from~(\ref{eqThm_cont1}) and the definition {\bf PR1--PR2} of the planted replica model that
	\begin{eqnarray}			\label{eqThm_cont2}
	\Erw[\Zkc(\gnm)^2\vecone\cbc{\cA_n}]
		&\leq&q_n\sum_{(\sigma_1,\sigma_2)\in\cB_n}
				\pr\brk{\G(n,m|\sigma_1,\sigma_2)\in\cA_n}
		\leq k^{2n}q_n\plp\brk{\cA_n}.
	\end{eqnarray}
Furthermore, 
since $\norm{\rho_{i\nix}(\sigma_1,\sigma_2)}_2^2,\norm{\rho_{\nix i}(\sigma_1,\sigma_2)}_2^2\geq1/k$ for all $i\in[k]$,
(\ref{eqErwZrho}) implies
	\begin{align*}
	\frac1n\ln\pr\brk{\sigma_1,\sigma_2\in\cS_k(\gnm)}&\leq
			\frac d2\ln\bc{1-\frac2k+\norm{\rho(\sigma_1,\sigma_2)}_2^2}+O(1/n)\\
				&=d\ln(1-1/k)+O(\omega/n)&\mbox{ for all $(\sigma_1,\sigma_2)\in\cB_n$}.
	\end{align*}
Hence, $q_n\leq(1-1/k)^{2m}\exp(O(\omega))$.
Plugging this bound into~(\ref{eqThm_cont2}) and setting $\bar z=\Erw[\Zkc(\gnm)]$, we see that
	\begin{eqnarray}	\label{eq_planting1}
	\Erw[\Zkc(\gnm)^2\vecone\cbc{\cA_n}]&\leq&
		k^{2n}(1-1/k)^{2m}\exp(O(\omega))\plp\brk{\cA_n}
			=\bar z^2\exp(O(\omega))\plp\brk{\cA_n}.
	\end{eqnarray}

On the other hand, if $\prcp\brk{\cA_n}>\eps$, then 
\Thm~\ref{Thm_Z} implies that
	 $$\prcp\brk{\cA_n\cap\cbc{\Zkc(\gnm)\geq\bar z/\omega}}>\eps/2.$$
Hence, (\ref{eqrr}) yields
	\begin{equation}\label{eq_planting2}
	\Erw[\Zkc(\gnm)^2\vecone\cbc{\cA_n}]\geq\frac{\eps}2\bcfr{\bar z}{\omega}^2.
	\end{equation}
But due to~(\ref{eq_planting0}), (\ref{eq_planting2}) contradicts~(\ref{eq_planting1}).
\end{proof}

\section{Analysis of the planted replica model}\label{Sec_Nor}

\noindent
\emph{In this section we assume that $k\geq3$ and that $d>0$.}

\medskip\noindent
\Prop~\ref{Thm_contig} reduces the task of studying the random replica model to that of analysing the planted replica model,
which we attend to in the present section.
If  $\theta$ is a rooted tree, $\tau_1,\tau_2\in\cS_k(\theta)$, $\omega\geq0$ and if $G$ is a $k$-colorable graph and $\sigma_1,\sigma_2\in\cS_k(G)$, then we let
	$$Q_{\theta,\tau_1,\tau_2,\omega}(G,\sigma_1,\sigma_2)=\frac1n\sum_{v\in[n]}
		\vecone\cbc{\nbh{G}{v}{\sigma_1}\ism(\theta,\tau_1)}\cdot\vecone\cbc{\nbh{G}{v}{\sigma_2}\ism(\theta,\tau_2)}.$$
Additionally, set
	$$q_{\theta,\omega}=\Zkc(\theta)^{-2}\pr\brk{\partial^\omega\T(d)\ism \theta}.$$
The aim in this section is to prove the following statement.

\begin{proposition}\label{Prop_Nor}
Let  $\theta$ be a rooted tree, $\tau_1,\tau_2\in\cS_k(\theta)$ and $\omega\geq0$.
Let $\plG,\plSIGMA_1,\plSIGMA_2$ be chosen from the distribution $\plp$.
Then $Q_{\theta,\tau_1,\tau_2,\omega}(\hat\G,\hat\SIGMA_1,\hat\SIGMA_2)$ converges to $q_{\theta,\omega}$ in probability.
\end{proposition}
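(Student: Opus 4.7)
The plan is a first-moment-plus-Chebyshev argument, in the style of classical local-weak-convergence proofs for $\gnm$. I will show that $\Erw[Q_{\theta,\tau_1,\tau_2,\omega}(\plG,\plSIGMA_1,\plSIGMA_2)]\to q_{\theta,\omega}$ and that $\Var[Q_{\theta,\tau_1,\tau_2,\omega}]=o(1)$. By the vertex-symmetry of $\plp$, the expectation equals the probability that, for a fixed vertex, say $1$, the rooted colored graphs $\nbh{\plG}{1}{\plSIGMA_1}$ and $\nbh{\plG}{1}{\plSIGMA_2}$ are isomorphic to $(\theta,\tau_1)$ and $(\theta,\tau_2)$ respectively. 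The conditioning in \textbf{PR1} is innocuous, since $\Erw[\cF(\plSIGMA_1,\plSIGMA_2)]\sim(2/k-1/k^2)\binom n2\gg m$, so Chernoff lets one couple $\plSIGMA_1,\plSIGMA_2$ with two independent uniform maps $[n]\to[k]$ up to a $o(1)$ event. A further Chernoff step then yields \whp\ that each of the $k^2$ color pairs is attained by $(1+o(1))n/k^2$ vertices and that the set of pairs bichromatic under both colorings has size $(1+o(1))(1-1/k)^2\binom n2$.

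The heart of the first-moment step is a local-limit calculation. Conditional on $\plSIGMA_1,\plSIGMA_2$, the graph $\plG$ consists of $m$ edges drawn uniformly without replacement from the bichromatic pairs; a direct count shows that for any fixed vertex $v$ the expected degree is asymptotic to $d$, and that conditional on being a fresh neighbor of $v$, a vertex $u$ has color pair asymptotically uniform on the $(k-1)^2$ pairs $(c_1',c_2')$ with $c_1'\neq\plSIGMA_1(v)$ and $c_2'\neq\plSIGMA_2(v)$. An inductive breadth-first exploration to depth $\omega$, with branching bounded in probability, then identifies the joint limiting law of $(\nbg{\plG}{1},\plSIGMA_1,\plSIGMA_2)$ as $\partial^\omega\T(d)$ decorated by two independent uniform $k$-colorings $\SIGMA_1,\SIGMA_2$; this matches exactly the recipe described after~(\ref{eqTreeSeq}), since on a tree the uniform distribution on colorings factorises as ``root colour uniform in $[k]$, each non-root vertex's colour uniform on the $k-1$ non-parent colours''. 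Taking the probability of the event $\{\partial^\omega\T(d)\ism\theta\}\cap\{\SIGMA_1=\tau_1,\SIGMA_2=\tau_2\}$ gives
$$\Erw[Q_{\theta,\tau_1,\tau_2,\omega}(\plG,\plSIGMA_1,\plSIGMA_2)]\to\Zkc(\theta)^{-2}\,\pr[\partial^\omega\T(d)\ism\theta]=q_{\theta,\omega}.$$

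For the variance, expanding $\Erw[Q^2]=n^{-2}\sum_{v,w}\pr[v,w\text{ both have neighborhood type }(\theta,\tau_1,\tau_2)]$ handles the $n$ diagonal terms in $O(1/n)=o(1)$. For $v\neq w$, the depth-$\omega$ neighborhoods have size bounded in probability (Poisson branching, fixed depth), and a union bound gives $\nbg{\plG}{v}\cap\nbg{\plG}{w}=\emptyset$ \whp; on this event a two-source exploration decouples into two independent copies of the single-vertex analysis, yielding $\pr[\cdot]\to q_{\theta,\omega}^2$. Hence $\Erw[Q^2]\to q_{\theta,\omega}^2$, $\Var[Q]\to 0$, and Chebyshev's inequality concludes. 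The main obstacle is the local-limit calculation: while morally the same as the exploration of $\gnm$, one must carefully track the asymmetries induced by two planted colorings simultaneously and verify that the label distribution inherited from $\plSIGMA_1,\plSIGMA_2$ on the explored tree is precisely that of two independent uniform tree-colorings. The two-source extension needed for the variance is then essentially the same calculation combined with the union bound that forces the two neighborhoods apart.
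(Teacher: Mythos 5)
Your argument is correct, and it is a genuinely different route from the paper's. The paper first passes to a \emph{binomial} version $\bplp$ of the planted replica model (\textbf{PR1'}--\textbf{PR2'}), because this gives the product structure needed to invoke the typical-bounded-differences inequality of Warnke (\Lem~\ref{Lemma_Lutz}) via vertex exposure; this yields the strong concentration bound $\bplp[|Q-\Erw Q|>n^{-1/3}]\leq\exp(-\Omega(\ln^2 n))$ of \Lem~\ref{Lemma_conc}. That exponential strength is not gratuitous: the price of transferring back from $\bplp$ to the exact $m$-edge model $\plp$ in \Lem~\ref{Lemma_binmodel} is a factor $O(\sqrt n)$, so a mere $o(1)$ concentration in $\bplp$ would not survive the transfer, and the whole detour through the binomial model is wired to this $\sqrt n$ bookkeeping (cf.~\Cor~\ref{Lemma_bin}). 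Your proposal short-circuits all of this by working directly in $\plp$: you compute $\Erw[Q]$ via the same breadth-first exploration coupled to the bicolored Galton--Watson tree (which is essentially \Lem~\ref{Lemma_bin_0}), and you compute $\Erw[Q^2]$ by a two-source exploration showing the depth-$\omega$ neighborhoods of two fixed vertices are disjoint \whp\ and decouple; Chebyshev then finishes. The two approaches share the first-moment computation but differ entirely in the concentration step: yours is more elementary (second moment instead of a martingale/McDiarmid-type inequality) and avoids the binomial/exact-model back-and-forth, at the cost of having to carry out the exploration argument twice (one- and two-source); the paper's buys exponential concentration, which is more than Proposition~\ref{Prop_Nor} requires but happens to be the natural output of their machinery. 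One small point worth making explicit in your write-up: in the $m$-edge model the edges are sampled without replacement, so the two explorations are not literally independent even when the neighborhoods are disjoint, but since each exploration reveals only $O_{\pr}(1)$ edges out of $\Theta(n^2)$ candidates, the induced dependence is $O(1/n)$ and does not affect the limit.
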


Intuitively, \Prop~\ref{Prop_Nor} asserts that in the planted replica model, the distribution of the ``dicoloring'' that $\hat\SIGMA_1,\hat\SIGMA_2$ induce in the
depth-$\omega$ neighborhood of a random vertex $v$ converges to the uniform distribution on the tree that the depth-$\omega$ neighborhood of $v$ induces.
The proof of \Prop~\ref{Prop_Nor}
is by extension of an argument from~\cite{Cond} for the ``standard'' planted model (with a single coloring) to the planted replica model.
More specifically, it is going to be convenient to work with the following {\em binomial} version $\bplp$ of the planted replica model,
where $p\in(0,1)$.
\begin{description}
\item[PR1'] sample two maps $\plSIGMA_1,\plSIGMA_2:[n]\to[k]$ independently and uniformly at random.
\item[PR2'] generate a random graph $\tilde\G$ by including each of the  $\binom{n}{2}-\cF(\plSIGMA_1,\plSIGMA_2)$
	 edges that are bichromatic under both $\plSIGMA_1,\plSIGMA_2$  with probability $p$  independently.
\end{description}
The distributions $\plp$, $\bplp$ are related as follows.

\begin{lemma}\label{Lemma_binmodel}
Let $p=m/\left(\binom{n}{2}(1-1/k)^2\right)$.
For any event $\cE$ we have $\plp\brk\cE\leq O(\sqrt n)\bplp\brk\cE+o(1).$
\end{lemma}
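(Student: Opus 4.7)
The plan is to couple the two models by exploiting the fact that, conditional on the pair of colorings and on the number of retained edges, both models produce a uniformly random graph with exactly $m$ edges that is bichromatic under $\plSIGMA_1$ and $\plSIGMA_2$. The gain factor $O(\sqrt n)$ will come from a local central limit theorem for the binomial distribution.

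First I would condition on the color pair. Write
\[
\bplp[\cE]=\frac1{k^{2n}}\sum_{\sigma_1,\sigma_2:[n]\to[k]}\pr[\tilde\G\in\cE_{\sigma_1,\sigma_2}\mid\sigma_1,\sigma_2],
\qquad
\cE_{\sigma_1,\sigma_2}=\{G:(G,\sigma_1,\sigma_2)\in\cE\}.
\]
Let $\cB_n^\star$ denote the set of ``typical'' pairs $(\sigma_1,\sigma_2)$ for which $|\cF(\sigma_1,\sigma_2)-(2/k-1/k^2)\binom n2|\leq n^{2/3}$; a Chernoff bound shows that a uniform pair lies in $\cB_n^\star$ with probability $1-o(1)$, and in particular $\cF(\sigma_1,\sigma_2)\leq\binom n2-m$ for every $(\sigma_1,\sigma_2)\in\cB_n^\star$. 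Consequently the conditioning in {\bf PR1} is satisfied with probability $1-o(1)$, which yields
\[
\plp[\cE]\leq\frac{1+o(1)}{k^{2n}}\sum_{(\sigma_1,\sigma_2)\in\cB_n^\star}\pr[\plG\in\cE_{\sigma_1,\sigma_2}\mid\sigma_1,\sigma_2]+o(1),
\]
because atypical color pairs contribute $o(1)$ mass under $\plp$.

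Next I would carry out the core comparison for fixed typical $(\sigma_1,\sigma_2)\in\cB_n^\star$. Given $\plSIGMA_1=\sigma_1$, $\plSIGMA_2=\sigma_2$, the edge number $|E(\tilde\G)|$ in $\bplp$ has distribution $\Bin(N,p)$ with $N=\binom n2-\cF(\sigma_1,\sigma_2)$, and the conditional law of $\tilde\G$ given $|E(\tilde\G)|=m$ is precisely the uniform distribution on $m$-edge graphs whose edges are bichromatic under both $\sigma_i$, which is exactly the conditional law of $\plG$ under $\plp$. Therefore
\[
\pr[\tilde\G\in\cE_{\sigma_1,\sigma_2}\mid\sigma_1,\sigma_2]\geq\pr[|E(\tilde\G)|=m\mid\sigma_1,\sigma_2]\cdot\pr[\plG\in\cE_{\sigma_1,\sigma_2}\mid\sigma_1,\sigma_2].
\]
The main obstacle is to show that $\pr[|E(\tilde\G)|=m\mid\sigma_1,\sigma_2]=\Omega(n^{-1/2})$ uniformly in $(\sigma_1,\sigma_2)\in\cB_n^\star$. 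This is where the specific choice of $p$ pays off: using $1-2/k+1/k^2=(1-1/k)^2$, one computes
\[
Np=\Bigl(\tbinom n2-\cF(\sigma_1,\sigma_2)\Bigr)\cdot\frac{m}{\binom n2(1-1/k)^2}=m+O(n^{-1/3}\cdot m/n)=m+o(\sqrt{Np(1-p)}),
\]
so $m$ sits within an $O(1)$ standard-deviation window of the binomial mean and the classical local CLT for the binomial (or a direct Stirling estimate on $\binom{N}{m}p^m(1-p)^{N-m}$) yields the desired $\Omega(n^{-1/2})$ lower bound.

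Finally I would assemble: combining the two displays, and noting that $\pr[\plG\in\cE_{\sigma_1,\sigma_2}\mid\sigma_1,\sigma_2]$ is the same object in both estimates,
\[
\bplp[\cE]\geq\Omega(n^{-1/2})\cdot\frac1{k^{2n}}\sum_{(\sigma_1,\sigma_2)\in\cB_n^\star}\pr[\plG\in\cE_{\sigma_1,\sigma_2}\mid\sigma_1,\sigma_2]\geq\Omega(n^{-1/2})\bc{\plp[\cE]-o(1)},
\]
which rearranges to $\plp[\cE]\leq O(\sqrt n)\,\bplp[\cE]+o(1)$. The delicate step is the local CLT bound, since it must hold uniformly over $(\sigma_1,\sigma_2)\in\cB_n^\star$; everything else is bookkeeping around the benign conditioning event in {\bf PR1}.
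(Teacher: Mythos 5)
Your proposal follows essentially the same strategy as the paper's proof: restrict to a typicality event for the colorings, show the binomial model lands on exactly $m$ edges with probability $\Omega(n^{-1/2})$ via Stirling/local CLT, and then identify the two conditional laws. There is, however, one genuine (though easily repaired) error: your typicality set $\cB_n^\star$, defined by $|\cF(\sigma_1,\sigma_2)-(2/k-1/k^2)\binom n2|\leq n^{2/3}$, does \emph{not} have probability $1-o(1)$; in fact it has probability $o(1)$. The quantity $\cF(\sigma_1,\sigma_2)$ has standard deviation $\Theta(n)$ around its mean (one can see this from $\cF(\sigma)=k\binom{n/k}2+\tfrac12\sum_i(|\sigma^{-1}(i)|-n/k)^2$ with $\sum_i(|\sigma^{-1}(i)|-n/k)^2$ concentrated at scale $n$ with fluctuations of order $n$), so demanding deviations $\leq n^{2/3}$ is asking to be within $n^{-1/3}$ standard deviations of the mean — a vanishing event. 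The correct threshold is anything that is $\omega(n)$ but $o(n^{3/2})$ (the paper uses the event $\norm{\rho(\plSIGMA_1,\plSIGMA_2)-\bar\rho}_2^2\leq n^{-1}\ln\ln n$, which translates to $\cF$-deviations of $O(n\ln\ln n)$). With such a threshold, $Np-m = O(\ln\ln n)=o(\sqrt n)$, which is still well within one standard deviation $\Theta(\sqrt n)$ of the binomial, so your LCLT step and the rest of the assembly go through unchanged. Beyond this one slip, the argument is a correct and faithful reconstruction of the paper's reasoning.
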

\begin{proof}
Let $\cB$ be the event that $\norm{\rho(\plSIGMA_1,\plSIGMA_2)-\bar\rho}_2^2\leq n^{-1}\ln\ln n$.
Since $\plSIGMA_1,\plSIGMA_2$ are chosen uniformly and independently, the Chernoff bound yields
	\begin{equation}\label{eqLemma_binmodel1}
	\bplp\brk{\cB},\plp\brk{\cB}=1-o(1).
	\end{equation}
Furthermore, given that $\cB$ occurs we obtain $\cF(\plSIGMA_1,\plSIGMA_2)=(2/k-1/k^2)\bink n2+o(n^{3/2})$.
Therefore,  Stirling's formula implies that the event $\cA$ that the graph $\tilde\G$ has precisely $m$ edges satisfies
	\begin{equation}\label{eqLemma_binmodel2}
	\bplp\brk{\cA|\cB}=\Omega(n^{-1/2}).
	\end{equation}
By construction, the binomial model $\bplp$ given $\cA\cap\cB$ is identical to $\plp$ given $\cB$.
Consequently, (\ref{eqLemma_binmodel1}) and~(\ref{eqLemma_binmodel2}) yield
	\begin{align*}
	\plp\brk\cE&\leq\plp\brk{\cE|\cB}+o(1)=\bplp\brk{\cE|\cA,\cB}+o(1)\leq O(\sqrt n)\bplp\brk{\cE}+o(1),
	\end{align*}
as desired.
\end{proof}

The following proofs are based on a simple observation.
Given the colorings $\hat\SIGMA_1,\hat\SIGMA_2$, we can construct $\tilde\G$ as follows.
First, we simply insert each of the $\bink n2$ edges of the complete graph on $\brk n$ with probability $p$ independently.
The result of this is, clearly, the \Erdos-\Renyi\ random graph $\G(n,p)$.
Then, we ``reject'' (i.e., remove) each edge of this graph that joins two vertices that have the same color under either
 $\hat\SIGMA_1$ or $\hat\SIGMA_2$.

\begin{lemma}\label{Lemma_shortcycles}
Let $\omega=\lceil\ln\ln n\rceil$ and assume that $p=O(1/n)$.
\begin{enumerate}
\item Let $\cK(G)$ be the total number of vertices $v$ of the graph $G$ 
	such that $\partial^\omega(G,v)$ contains a cycle.
	Then $$\bplp\brk{\cK(\tilde\G)>n^{2/3}}=o(n^{-1/2}).$$
\item Let $\cL$ be the event that there is a vertex $v$ such that $\partial^\omega(\tilde\G,v)$ contains more than $n^{0.1}$ vertices.
	Then $$\bplp\brk{\cL}\leq\exp(-\Omega(\ln^2n)).$$
\end{enumerate}
\end{lemma}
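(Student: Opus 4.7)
The plan is to couple $\tilde\G$ to the ordinary Erdős--Rényi graph $G(n,p)$ and reduce both parts to standard estimates there. Observe that in the definition \textbf{PR1'}--\textbf{PR2'} we may equivalently generate $\tilde\G$ by first drawing $G(n,p)$ (independently of the colorings $\plSIGMA_1, \plSIGMA_2$) and then deleting any edge monochromatic under either coloring. Hence $\tilde\G$ is a subgraph of $G(n,p)$, and since deleting edges can only shrink depth-$\omega$ neighborhoods we have $\nbg{\tilde\G}{v} \subseteq \nbg{G(n,p)}{v}$ for every vertex $v$. Consequently $\cK(\tilde\G) \leq \cK(G(n,p))$, and the event $\cL$ is contained in the analogous event evaluated on $G(n,p)$. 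It therefore suffices to bound the two quantities for $G(n,p)$ with $p=O(1/n)$.

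For part (i) I would estimate $\Erw[\cK(G(n,p))]$ by linearity of expectation. For a fixed vertex $v$, the probability that $\nbg{G(n,p)}{v}$ contains a cycle is bounded by the expected number of cycles through $v$ of length $\ell\leq 2\omega+1$, which is $\sum_{\ell=3}^{2\omega+1} O((np)^\ell/n) = O((np)^{2\omega}/n)$ since $np$ is bounded. With $\omega = \lceil\ln\ln n\rceil$ this is $(\ln n)^{O(1)}/n$, so $\Erw[\cK(G(n,p))] = (\ln n)^{O(1)}$. Markov's inequality then yields $\pr[\cK(G(n,p))>n^{2/3}] = O((\ln n)^{O(1)} n^{-2/3}) = o(n^{-1/2})$, which transfers to $\tilde\G$ by the coupling.

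For part (ii) I would invoke standard branching-process tail bounds. The size $|\nbg{G(n,p)}{v}|$ is stochastically dominated by the total population up to generation $\omega$ of a Galton--Watson process with $\Bin(n-1,p)$ offspring distribution, whose mean is bounded by $np=O(1)$. Iterating a Chernoff estimate level by level (or using the classical exponential tail for total progeny of a branching process with bounded-mean, light-tailed offspring) gives $\pr[|\nbg{G(n,p)}{v}| > n^{0.1}] \leq \exp(-\Omega(n^{0.1}))$, since the expected size is only polylogarithmic. A union bound over $v\in[n]$ then yields $\pr[\cL \text{ in } G(n,p)] \leq n\exp(-\Omega(n^{0.1}))$, which is vastly smaller than $\exp(-\Omega(\ln^2 n))$; the bound for $\tilde\G$ follows from the coupling.

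The main obstacle, such as it is, is quite mild: one must verify that the coupling really does produce $\tilde\G$ as a subgraph of an independent copy of $G(n,p)$, which is immediate from \textbf{PR2'}. A secondary point is to make sure the branching-process tail in part (ii) is robust to iterating $\omega=\ln\ln n$ times, but because the target deviation $n^{0.1}$ massively exceeds the polylogarithmic expected size of $\partial^\omega$, essentially any standard exponential tail is more than enough slack. No ingredient beyond elementary $G(n,p)$ estimates is needed.
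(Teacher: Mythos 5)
Your argument follows exactly the same route as the paper's: couple $\tilde\G$ monotonically into $G(n,p)$ with $p=m/\left(\binom n2(1-1/k)^2\right)$ (observe $\tilde\G$ is obtained from $G(n,p)$ by edge deletion), so $\nbg{\tilde\G}{v}\subseteq\nbg{G(n,p)}{v}$ for every $v$, and then apply a first-moment estimate plus Markov for part (i) and a tail bound for part (ii). Two small imprecisions, neither fatal: in (i) a cycle in $\nbg{G(n,p)}{v}$ need not pass through $v$, so the quantity to bound is the expected number of short cycles lying within distance $\omega$ of $v$ (same polylogarithmic order, so the conclusion stands); in (ii) the ``classical exponential tail for total progeny'' does not literally apply when the offspring mean $np$ can exceed $1$ (the untruncated total progeny is then infinite with positive probability), but your level-by-level Chernoff alternative — or equivalently a first-moment bound over bounded-depth spanning trees, or simply a max-degree bound $\pr[\Delta(G(n,p))>\ln^2n]\leq\exp(-\Omega(\ln^2n\cdot\ln\ln n))$ — does give the claimed tail, and indeed a stronger one than the required $\exp(-\Omega(\ln^2n))$.
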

\begin{proof}
Obtain the random graph $\G'$ from $\tilde\G$ by adding every edge that is monochromatic under either $\plSIGMA_1,\plSIGMA_2$
with probability $p=m/\left(\binom{n}{2}(1-1/k)^2\right)$ independently.
Then $\G'$ has the same distribution as the standard binomial random graph $\G(n,p)$.
Since $\cK(\tilde\G)\leq\cK(\G')$, the first assertion follows from the well-known fact that $\Erw[\cK(\G(n,p))]\leq n^{o(1)}$ and Markov's inequality.
A similar argument yields the second assertion.
\end{proof}

\begin{lemma}\label{Lemma_conc}
Let  $\theta$ be a rooted tree, let $\tau_1,\tau_2\in\cS_k(\theta)$ and let $\omega\geq0$.
Then
	$$\bplp\brk{\abs{Q_{\theta,\tau_1,\tau_2,\omega}(\tilde\G,\hat\SIGMA_1,\hat\SIGMA_2)-
		\Erw[Q_{\theta,\tau_1,\tau_2,\omega}(\tilde\G,\hat\SIGMA_1,\hat\SIGMA_2)]}>n^{-1/3} }\leq\exp(-\Omega(\ln^2n)).$$
\end{lemma}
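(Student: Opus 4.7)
My plan is to apply the bounded-differences inequality of \Lem~\ref{Lemma_Lutz} to the function $f=Q_{\theta,\tau_1,\tau_2,\omega}$ viewed as a function of $n$ independent per-vertex coordinates. To build these coordinates, I couple the binomial planted replica model with an auxiliary $G(n,p)$ graph: introduce iid $\Be(p)$ variables $U_{ij}$ for $1\leq i<j\leq n$, declare $\{i,j\}\in\tilde\G$ iff $U_{ij}=1$ and $\{i,j\}$ is bichromatic under both $\hat\SIGMA_1,\hat\SIGMA_2$, and let $G^*=\cbc{\cbc{i,j}:U_{ij}=1}$. Then $G^*\sim G(n,p)$ is independent of the colors, and the variables $X_v=(\hat\SIGMA_1(v),\hat\SIGMA_2(v),(U_{vw})_{w>v})$ are independent across $v$ and jointly determine $(\tilde\G,\hat\SIGMA_1,\hat\SIGMA_2)$.

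Fix $L=(\ln n)^{C(\omega)}$ for a sufficiently large constant $C(\omega)$, and let $\Gamma$ be the event that every vertex has depth-$(\omega+2)$ neighborhood of size at most $L$ in $G^*$. Standard tail bounds for $G(n,p)$ with $np=O(1)$, which also underpin \Lem~\ref{Lemma_shortcycles}, give $\pr\brk{\Gamma^c}\leq\exp(-\Omega(\ln^2 n))$. For the Lipschitz constant I observe that a change in $X_v$ only alters edges of $\tilde\G$ incident to $v$; provided both the pre- and post-modification configurations have relevant neighborhoods around $v$ of size $O(L)$---which may be enforced via the depth margin in $\Gamma$, or equivalently by truncating $f$ to indicators supported on vertices with small $G^*$-neighborhoods---each affected edge propagates to at most $O(L)$ indicators and the direct color change at $v$ touches at most $L$ further indicators. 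Altogether this yields $|f(x)-f(x')|\leq c:=O(L^2/n)$ for $x\in\Gamma$, while $c'\leq1$ is trivial.

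Applying \Lem~\ref{Lemma_Lutz} with $N=n$, $t=n^{-1/3}$ and $\gamma=n^{-1}$ then completes the proof: the subgaussian term $2\exp(-\Omega(t^2/(N(c+\gamma c')^2)))$ is at most $2\exp(-\Omega(n^{1/3}/\ln^{4C}n))=\exp(-\Omega(\ln^2 n))$, while the error term $(2N/\gamma)\pr\brk{\Gamma^c}=2n^2\exp(-\Omega(\ln^2 n))$ is also $\exp(-\Omega(\ln^2 n))$. The main technical obstacle is the Lipschitz bound itself, since a color change at $v$ could a priori flip $\Theta(n)$ edges and adversarial $U_{vw}'$ values could create many new neighbors of $v$ in $(G^*)'$; handling this cleanly requires either truncating $f$ outside a good event or enlarging $\Gamma$ enough to absorb single-coordinate adversarial modifications, and the factor-$n$ savings obtained by packaging all of $v$'s data into a single coordinate $X_v$ (as opposed to taking the $\binom{n}{2}$ edge variables separately) is what makes $Nc^2$ polylogarithmic in the final estimate.
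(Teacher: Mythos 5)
Your plan is essentially the paper's proof: apply Warnke's typical bounded-differences inequality (Lemma~\ref{Lemma_Lutz}) under vertex exposure, with the good event $\Gamma$ (every depth-$\omega$ ball of bounded size) supplied by Lemma~\ref{Lemma_shortcycles}, then absorb the failure probability via the $(2N/\gamma)\pr[\Gamma^c]$ term. The paper uses $N=2n$ coordinates (colors and adjacency lists in separate slots rather than packaged into a single $X_v$), a looser truncation parameter $\lambda=n^{0.01}$, and the normalization $t=n^{2/3}$ for the unnormalized sum --- all cosmetic differences. The one issue you flag as the "main technical obstacle'' --- that changing a coordinate at $v$ can create a large ball in the modified configuration, so the naive Lipschitz bound fails --- is resolved in the paper exactly as you anticipate: they introduce the truncated graph $\G'$ (delete all edges incident to vertices whose $\tilde\G$-ball exceeds $\lambda$) and apply the inequality to the truncated statistic $S'$ built from $\G'$, which is Lipschitz with constant $O(\lambda)$ unconditionally on $\Gamma$ and agrees with $S$ when $\Gamma$ holds, so $\Erw[S']=\Erw[S]+o(1)$. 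Filling that step in your write-up would complete the argument.
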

\begin{proof}
The proof is based on \Lem~\ref{Lemma_Lutz}.
To apply \Lem~\ref{Lemma_Lutz}, we view $(\tilde\G,\plSIGMA_1,\plSIGMA_2)$ as chosen from a product space $X_2,\ldots,X_N$ with $N=2n$ where
$X_v\in\brk{k}^2$ is uniformly distributed for $v\in[n]$ and
where	$X_{n+v}$ is a $0/1$ vector of length $v-1$ whose components are independent $\Be(p)$ variables
	for $v\in[n]$.
Namely, $X_{v}$ with $v\in[n]$ represents the color pair $(\hat\SIGMA_1(v),\hat\SIGMA_2(v))$,
and $X_{n+v}$ for $v\in[n]$ indicates to which vertices $w<v$ 
with $\hat\SIGMA_1(w)\neq\hat\SIGMA_1(v)$,  $\hat\SIGMA_2(w)\neq\hat\SIGMA_2(v)$ vertex $v$ is adjacent (``vertex exposure'').

Define a random variables $S_v=S_v(\tilde\G,\plSIGMA_1,\plSIGMA_2)$ and $S$ by letting 
	\begin{align*}
	S_v &=\vecone\cbc{\nbh{\tilde\G}{v}{\plSIGMA_1}\ism(\theta,\tau_1)}\cdot\vecone\cbc{\nbh{\tilde\G}{v}{\plSIGMA_2}\ism(\theta,\tau_2)},&
	S&=\frac1n\sum_{v\in[n]}S_v.
	\end{align*}
Then 
	\begin{equation}\label{eqQS}
	Q_{\theta,\tau_1,\tau_2,\omega}=S.
	\end{equation}
Further, set $\lambda=n^{0.01}$ and let $\Gamma$ be the event that $|\nbg{\tilde\G}{v}|\leq \lambda$ for all vertices $v$.
	Then by \Lem~\ref{Lemma_shortcycles} we have
	\begin{eqnarray}\label{eqLemma_conc1}
	\pr\brk{\Gamma}&\geq& 1-\exp(-\Omega(\ln^2n)).
	\end{eqnarray}
Furthermore, let $\G'$ be the graph obtained from $\tilde\G$ by removing all edges $e$ that
	are incident with a vertex $v$ such that $|\nbg{\tilde\G}{v}|>\lambda$
	and let 
	\begin{align*}
	S_v'& =\vecone\cbc{\nbh{\G'}{v}{\plSIGMA_2}\ism(\theta,\tau_1)}\cdot\vecone\cbc{\nbh{\G'}{v}{\plSIGMA_2}\ism(\theta,\tau_2)},&
		S'=\frac1n\sum_{v\in[n]}S_v'.
	\end{align*}
If $\Gamma$ occurs, then $S=S'$.
	Hence, (\ref{eqLemma_conc1}) implies that
	\begin{eqnarray}\label{eqLemma_conc2}
	\Erw[S']&=&\Erw[S]+o(1).
	\end{eqnarray}

The random variable $S'$ satisfies~(\ref{eqTL}) with $c=\lambda$ and $c'=n$.
	Indeed, altering either the colors of one vertex $u$ or its set of neighbors can only affect those vertices $v$
	that are at distance at most $\omega$ from $u$, and in $\G'$ there are no more than $\lambda$ such vertices.
	Thus, \Lem~\ref{Lemma_Lutz} applied with, say, $t=n^{2/3}$ and $\gamma=1/n$ and~(\ref{eqLemma_conc1}) yield
	\begin{eqnarray}\label{eqLemma_conc3}
	\pr\brk{|S'-\Erw[S']|>t}\leq\exp(-\Omega(\ln^2n)).
	\end{eqnarray}
Finally, the assertion follows from (\ref{eqQS}), (\ref{eqLemma_conc2}) and~(\ref{eqLemma_conc3}).
\end{proof}

To proceed, we need the following concept.
A {\em $k$-dicolored graph} $(G,v_0,\sigma_1,\sigma_2)$ consists of a $k$-colorable graph $G$ with $V(G)\subset\RR$, a root $v_0\in V(G)$
and two $k$-colorings $\sigma_1,\sigma_2:V(G)\to[k]$.
We call two $k$-dicolored graphs $(G,v_0,\sigma_1,\sigma_2)$, $(G',v_0',\sigma_1',\sigma_2')$ {\em isomorphic}
if there is an isomorphism $\pi:G\to G'$ such that $\pi(v_0)=v_0'$ and $\sigma_1=\sigma_1'\circ\pi$, $\sigma_2=
\sigma_2'\circ\pi$ and such that for any $v,u\in V(G)$ such that $v<u$ we have $\pi(v)<\pi(u)$. 

\begin{lemma}\label{Lemma_bin_0}
	Let  $\theta$ be a rooted tree, let $\tau_1,\tau_2\in\cS_k(\theta)$ and let $\omega\geq0$.
	Then 
	\begin{equation}\label{eqProp_Nor_bin0}
	\Erw\left[Q_{\theta,\tau_1,\tau_2,\omega}(\tilde{\G})\right]=q_{\theta,\omega}+o(1).
	\end{equation}
\end{lemma}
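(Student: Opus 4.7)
By linearity of expectation and the symmetry between the vertices of $[n]$ under the construction of $(\tilde\G,\hat\SIGMA_1,\hat\SIGMA_2)$, for any fixed $v\in[n]$
	\begin{equation*}
	\Erw[Q_{\theta,\tau_1,\tau_2,\omega}(\tilde\G,\hat\SIGMA_1,\hat\SIGMA_2)]
		=\pr\brk{\nbh{\tilde\G}{v}{\hat\SIGMA_1}\ism(\theta,\tau_1)\mbox{ and }\nbh{\tilde\G}{v}{\hat\SIGMA_2}\ism(\theta,\tau_2)}.
	\end{equation*}
The plan is to argue that as $n\to\infty$ this joint law converges to that of $\partial^\omega\T(d)$ equipped with two independent uniformly random $k$-colorings---call this the \emph{tree model}. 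Under the tree model the event on the right has probability $\pr[\partial^\omega\T(d)\ism\theta]\cdot\Zkc(\theta)^{-2}=q_{\theta,\omega}$, since conditional on the tree structure, each of the two uniform $k$-colorings equals the prescribed $\tau_i$ independently with probability $1/\Zkc(\theta)$.

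The core is a branching-process coupling. Conditional on $(\hat\SIGMA_1(v),\hat\SIGMA_2(v))=(c_1,c_2)$, each other vertex $u$ has $(\hat\SIGMA_1(u),\hat\SIGMA_2(u))$ uniform on $[k]^2$ independently across $u$, and the edge $\{v,u\}$ is inserted with probability $p$ if the color pair is bichromatic with $(c_1,c_2)$ and with probability $0$ otherwise. Since $p=dk^2/((k-1)^2 n)+O(n^{-2})$, for each of the $(k-1)^2$ bichromatic pairs $(d_1,d_2)$ the number of such neighbors of $v$ is $\Bin(n-1,p/k^2)$, which lies within total variation distance $O(1/n)$ of $\Po(d/(k-1)^2)$. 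This matches the color-labelled offspring distribution at the root of $\T(d)$ under two independent uniform $k$-colorings, since Poisson splitting of the $\Po(d)$ offspring produces independent $\Po(d/(k-1)^2)$ counts over the $(k-1)^2$ bichromatic pairs.

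To extend the match to depth $\omega$, I would expose $\partial^\omega(\tilde\G,v)$ in breadth-first fashion, at each step revealing the colors and adjacencies of the next frontier vertex's neighbors among the yet-unexposed vertices. By \Lem~\ref{Lemma_shortcycles}, with probability $1-o(1)$ this neighborhood is a tree of at most $n^{0.1}$ vertices, so no vertex is exposed twice and at each of the at most $n^{0.1}$ steps the pool of unexposed vertices has size $n-O(n^{0.1})$. The same $\Bin\to\Po$ approximation applies uniformly with error $O(1/n)$ in total variation, and summing over the at most $n^{0.1}$ steps yields a coupling of the depth-$\omega$ exploration with the first $\omega$ generations of the tree model that succeeds with probability $1-o(1)$.

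Under the coupling, the probability that $\partial^\omega(\tilde\G,v)$ together with $\hat\SIGMA_1,\hat\SIGMA_2$ restricted to it is isomorphic to $(\theta,\tau_1,\tau_2)$ differs by $o(1)$ from the tree-model probability, which equals $q_{\theta,\omega}$. The main obstacle will be the cumulative control of the Poisson approximation error across the exploration, but this is routine given that $\omega$ is fixed and \Lem~\ref{Lemma_shortcycles} provides a strong polynomial bound on the neighborhood size; the relative order on vertices inherited from $[n]$ and that inherited from the uniform embedding of $\T(d)$ into $[0,1]$ are both uniform on any fixed-size subset, so the order-preserving isomorphism classes agree in distribution as required.
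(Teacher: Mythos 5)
Your argument is correct and follows essentially the same route as the paper's own proof: reduce to a single random root by exchangeability, run a breadth-first branching-process exploration of $\partial^\omega(\tilde\G,v)$ while revealing color pairs on the fly, invoke \Lem~\ref{Lemma_shortcycles} to confine the exploration to a tree of $O(n^{0.1})$ vertices, and couple the per-bichromatic-pair offspring counts $\Bin(n-O(n^{0.1}),p/k^2)$ to $\Po(d/(k-1)^2)$ with total-variation error $O(n^{-0.9})$ per step, accumulating to $o(1)$. The paper formalizes the same idea with the dead/alive/rejected/unborn bookkeeping and the interval embedding $u(i)\in((i-1)/n,i/n)$ to handle the order-preserving isomorphism, which you dispatch informally at the end; the substance is identical.
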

\begin{proof}
Recall that $\T(d)$ is the (possibly infinite) Galton-Watson tree rooted at $v_0$.
Let $\TAU_1,\TAU_2$ denote two $k$-colorings of $\partial^\omega\T(d)$ chosen uniformly at random.
In addition, let $\vec v^*\in[n]$ denote a uniformly random vertex of $\tilde\G$.
To establish~(\ref{eqProp_Nor_bin0}) it suffices to construct a coupling of the random dicolored tree $(\T(d),v_0,\TAU_1,\TAU_2)$
and the random graph $\partial^\omega(\tilde\G,\vec v^*,\hat\SIGMA_1,\hat\SIGMA_2)$ such that
	\begin{align}\label{eqEX0}
	\pr\brk{\partial^\omega(\tilde\G,\vec v^*,\hat\SIGMA_1,\hat\SIGMA_2)\ism(\T(d),v_0,\TAU_1,\TAU_2)}=1-o(1).
	\end{align}
To this end, let $(u(i))_{i\in[n]}$ be a
family of independent random variables such that $u(i)$ is uniformly distributed over the interval $((i-1)/n,i/n)$ for each $i\in[n]$.

The construction of this coupling is based on the principle of deferred decisions.
More specifically, we are going to view the exploration of the depth-$\omega$ neighborhood of $\vec v^*$ in the random graph $\tilde\G$ as a random process,
	reminiscent of the standard breadth-first search process for the exploration of the connected components of the random graph.
The colors of the individual vertices and their neighbors are revealed in the course of the exploration process.
The result of the exploration process will be a dicolored tree $(\hat\T,u(\vec v^*),\hat\TAU_1,\hat\TAU_1)$ whose vertex set is contained in $[0,1]$.
This tree is isomorphic to $\partial^\omega(\tilde\G,\vec v^*,\hat\SIGMA_1,\hat\SIGMA_2)$ \whp\
Furthermore, the distribution of the tree is at total variance distance $o(1)$ from that of $(\T(d),v_0,\TAU_1,\TAU_2)$.

Throughout the exploration process, every vertex is marked either \emph{dead}, \emph{alive}, \emph{rejected} or \emph{unborn}.
The semantics of the marks is similar to the one in the usual ``branching process'' argument for the component exploration in the random graph:
	vertices whose neighbors have been explored are ``dead'', vertices that have been reached but whose neighbors have not yet been inspected are ``alive'',
	and vertices that the process has not yet discovered are ``unborn''.
The additional mark ``rejected'' is necessary because we reveal the colors of the vertices as we explore them.
More specifically, as we explore the neighbors of an alive $v$ vertex,
we insert a ``candidate edge''  between the alive vertex and {\em every} unborn vertex with probability $p$
independently.
If upon revealing the colors of the ``candidate neighbor'' $w$ of $v$ we find a conflict (i.e., $\hat\SIGMA_1(v)=\hat\SIGMA_1(w)$ or
$\hat\SIGMA_2(v)=\hat\SIGMA_2(w)$), we ``reject'' $w$ and the ``candidate edge'' $\{v,w\}$ is discarded.
Additionally, we will maintain for each vertex $v$ a number $D(v)\in[0,\infty]$; the intention is that $D(v)$ is the distance from the root $\vec v^*$ in the part
of the graph that has been explored so far.
The formal description of the process is as follows.
	\begin{description}
		\item[EX1]
		Initially, $\vec v^*$ is alive, $D(\vec v^*)=0$, and all other vertices $v\neq\vec v^*$ are unborn and $D(v)=\infty$.
		Choose a pair of colors $(\hat\SIGMA_1(\vec v^*),\hat\SIGMA_2(\vec v^*))\in[k]^2$ uniformly at random.
		Let $\hat\T$ be the tree consisting of the root vertex $u(\vec v^*)$ only and let 
			$\hat\TAU_h(u(\vec v^*))=\hat\SIGMA_h(\vec v^*)$ for $h=1,2$.
		\item[EX2] 
		While there is an alive vertex $y$ such that $D(y)<\omega$, let $v$ be the  least such vertex.
		For each vertex $w$ that is either rejected or unborn let $a_{vw}=\Be(p)$;
			the random variables $a_{vw}$ are mutually independent.
		For each unborn vertex $w$ such that $a_{vw}=1$ choose a pair $(\hat\SIGMA_1(w),\hat\SIGMA_2(w))\in[k]^2$ independently and uniformly at random
		and set $D(w)=D(v)+1$.
		Extend the tree $\hat\T$ by adding the vertex $u(w)$ and the edge $\{u(v),u(w)\}$ 
			and by setting $\hat\TAU_1(u(w))=\hat\SIGMA_1(w)$, $\hat\TAU_2(u(w))=\hat\SIGMA_2(w)$
			for every unborn $w$ such that $a_{vw}=1$,
				$\hat\SIGMA_1(v)\neq\hat\SIGMA_1(w)$ and $\hat\SIGMA_2(v)\neq\hat\SIGMA_2(w)$.
		Finally, declare the vertex $v$ dead, declare all $w$ with $a_{vw}=1$ and $\hat\SIGMA_1(v)\neq\hat\SIGMA_1(w)$ and
				$\hat\SIGMA_2(v)\neq\hat\SIGMA_2(w)$ alive, and declare all other $w$ with
				$a_{vw}=1$ rejected.
	\end{description}
The process stops once there is no alive vertex $y$ such that $D(y)<\omega$ anymore, at which point we have got a tree $\hat\T$ that is embedded into $[0,1]$.

Let $\cA$ be the event that $\partial^\omega(\hat\G,\vec v^*)$ is an acyclic subgraph that contains no more than $n^{0.1}$ vertices.
Furthermore, let $\cR$ be the event that in {\bf EX2} it never occurs that $a_{vw}=1$ for a rejected vertex $w$.
Then \Lem~\ref{Lemma_shortcycles} implies that $\pr\brk\cA=1-o(1)$.
Moreover, since $p=O(1/n)$ we have $\pr\brk{\cR|\cA}=1-O(n^{-0.8})=1-o(1)$, whence $\pr\brk{\cA\cap\cR}=1-o(1)$.
Further, given that $\cA\cap\cR$ occurs, $\partial^\omega(\hat\G,\vec v^*,\hat\SIGMA_1,\hat\SIGMA_2)$ is isomorphic to
	$(\hat\T,u(\vec v^*),\hat\TAU_1,\hat\TAU_2)$.
Thus,
	\begin{align}\label{eqex1}
	\pr\brk{\partial^\omega(\hat\G,\vec v^*,\hat\SIGMA_1,\hat\SIGMA_2)\ism (\hat\T,u(\vec v^*),\hat\TAU_1,\hat\TAU_2)}=1-o(1).
	\end{align}

Further, if $\cA\cap\cR$ occurs, then whenever {\bf EX2} processes an alive vertex $v$ with $D(v)<\omega$,
the number of unborn neighbors of $v$ of every color combination $(s_1,s_2)$ such that $s_1\neq\hat\SIGMA(v)$, $s_2\neq\hat\SIGMA(v)$
is a binomial random variable whose mean lies in the interval $[np/k^2,(n-n^{0.1})p/k^2]$.
The total variation distance of this binomial distribution and the Poisson distribution $\Po(d/(k-1)^2)$,
which is precisely distribution of the number of children colored $(s_1,s_2)$ in the dicolored Galton-Watson tree,
 is $O(n^{-0.9})$ by the choice of $p$.
In addition, let $\cB$ be the event that each interval $((i-1)/n,i/n)$ for $i=1,\ldots,n$ contains at most one vertex of the tree $\partial^\omega\T(d)$.
Then $\pr\brk{\cB}=1-o(1)$ and
given $\cA\cap\cR$ and $\cB$, there is a coupling of $(\hat\T,u(\vec v^*),\hat\TAU_1,\hat\TAU_2)$
and $\partial^\omega(\T(d),v_0,\TAU_1,\TAU_2)$ such that
	\begin{align}\label{eqex2}
	\pr\brk{\partial^\omega(\T(d),v_0,\TAU_1,\TAU_2)=(\hat\T,u(\vec v^*),\hat\TAU_1,\hat\TAU_2)}=1-o(1).
	\end{align}
Finally, (\ref{eqEX0}) follows from (\ref{eqex1}) and~(\ref{eqex2}).
\end{proof}

\begin{corollary}\label{Lemma_bin}
	Let  $\theta$ be a rooted tree, let $\tau_1,\tau_2\in\cS_k(\theta)$ and let $\omega\geq0$.
	Moreover, let $p= m/(\bink n2(1-1/k)^2)$.
	Then
	\begin{equation}\label{eqProp_Nor1}
	\lim_{\eps\searrow 0}\lim_{n\to\infty}\sqrt n\cdot\bplp\brk{|Q_{\theta,\tau_1,\tau_2,\omega}-q_{\theta,\tau_1,\tau_2,\omega}|>\eps}=0.
	\end{equation}
\end{corollary}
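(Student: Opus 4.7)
The plan is to derive this corollary as an essentially immediate consequence of Lemma~\ref{Lemma_conc} (concentration around the mean under $\bplp$) combined with Lemma~\ref{Lemma_bin_0} (identification of the mean with $q_{\theta,\omega}$), stitched together by a single triangle inequality. The super-polynomial tail bound $\exp(-\Omega(\ln^2 n))$ furnished by Lemma~\ref{Lemma_conc} is precisely what permits the extra $\sqrt n$ factor in the statement to be absorbed.

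Before beginning, I would remark that the quantity $q_{\theta,\tau_1,\tau_2,\omega}$ in the statement coincides with $q_{\theta,\omega}=\Zkc(\theta)^{-2}\pr[\partial^\omega\T(d)\ism\theta]$ from the setup of Proposition~\ref{Prop_Nor}: conditional on the isomorphism type of the tree being $\theta$, each pair in $\cS_k(\theta)^2$ arises with equal probability when one samples two uniform colorings of $\theta$, so the factor $\Zkc(\theta)^{-2}$ does not depend on the particular $(\tau_1,\tau_2)$. For the proof itself, fix $\eps>0$. Lemma~\ref{Lemma_bin_0} supplies $n_0=n_0(\eps)$ such that the mean $\Erw[Q_{\theta,\tau_1,\tau_2,\omega}(\tilde\G,\hat\SIGMA_1,\hat\SIGMA_2)]$ lies within $\eps/2$ of $q_{\theta,\omega}$ for all $n\geq n_0$; enlarging $n_0$ if necessary so that additionally $n^{-1/3}<\eps/2$, the triangle inequality yields
$$\{|Q-q_{\theta,\omega}|>\eps\}\;\subseteq\;\{|Q-\Erw Q|>\eps/2\}\;\subseteq\;\{|Q-\Erw Q|>n^{-1/3}\}.$$
Lemma~\ref{Lemma_conc} then bounds the $\bplp$-probability of the rightmost event by $\exp(-\Omega(\ln^2 n))$.

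Since $\sqrt n\exp(-\Omega(\ln^2 n))=o(1)$, the inner limit $\lim_{n\to\infty}\sqrt n\,\bplp[|Q-q_{\theta,\omega}|>\eps]$ vanishes for every fixed $\eps>0$, and so the outer limit $\lim_{\eps\searrow 0}$ is automatically $0$. I do not anticipate any genuine obstacle: the whole argument is essentially bookkeeping. The only mild subtlety is the order of quantifiers, namely that $\eps$ must be fixed before invoking Lemma~\ref{Lemma_bin_0} so that $n_0(\eps)$ is a genuine constant, but this is exactly the order dictated by the iterated-limit notation in the statement.
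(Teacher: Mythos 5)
Your proof is correct and takes exactly the approach the paper intends: the paper's proof of this corollary is the one-line statement ``This follows by combining Lemmas~\ref{Lemma_conc} and~\ref{Lemma_bin_0},'' and you have filled in precisely that triangle-inequality bookkeeping, including the (correct) observation that $q_{\theta,\tau_1,\tau_2,\omega}$ is the same quantity as the paper's $q_{\theta,\omega}$.
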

\begin{proof}
This follows by combining \Lem s~\ref{Lemma_conc} and~\ref{Lemma_bin_0}.
\end{proof}

\noindent
Finally, \Prop~\ref{Prop_Nor} is immediate from \Lem~\ref{Lemma_binmodel} and \Cor~\ref{Lemma_bin}.

\section{Establishing local weak convergence}\label{Sec_lwc}

\noindent{\em Throughout this section we assume that $k\geq k_0$ for some large enough constant $k_0$ and that $d<\dc$.}

\medskip
\noindent
Building upon \Prop s~\ref{Thm_contig} and~\ref{Prop_Nor}, we are going to prove \Thm~\ref{Thm_xlwc} and its corollaries.
The key step is to establish the following statement.

\begin{proposition}\label{Prop_replicas}
Let $\omega\geq0$, let $\theta_1,\ldots,\theta_l$ be a rooted trees and let $\tau_1\in\cS_k(\theta_1),\ldots,\tau_l\in\cS_k(\theta_l)$.
Let 
	$$X_n=\sum_{v_1,\ldots,v_l\in[n]}
		\bck{\prod_{i=1}^l\vecone\cbc{\partial^\omega(\G,v_i,\SIGMA)\ism(\theta_i,\tau_i)}}_{\G}.$$
Then $n^{-l}X_n$ converges to $\prod_{i=1}^l\pr\brk{\nb{\T(d)}\ism(\theta_i,\tau_i)}$ in probability.
\end{proposition}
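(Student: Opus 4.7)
The plan is to deduce Proposition \ref{Prop_replicas} from Propositions \ref{Thm_contig} and \ref{Prop_Nor} via a second-moment argument in the replica model. The first observation is that the average $\bck{\cdot}_{\G}$ commutes with the sum over $\vec v\in[n]^{l}$, so
\[
n^{-l}X_n=\bck{\prod_{i=1}^{l}S_i(\G,\SIGMA)}_{\G},\qquad
S_i(G,\sigma):=\frac{1}{n}\sum_{v\in[n]}\vecone\cbc{\nbh{G}{v}{\sigma}\ism(\theta_i,\tau_i)}\in[0,1].
\]
Squaring and introducing an independent replica yields $(n^{-l}X_n)^{2}=\bck{Y_n(\G,\SIGMA_1,\SIGMA_2)}_{\G}$ with $Y_n(G,\sigma_1,\sigma_2):=\prod_{i=1}^{l}S_i(G,\sigma_1)S_i(G,\sigma_2)$. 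Thus the problem reduces to showing that $Y_n\to c^{2}$ in probability under $\prcp$, where $c:=\prod_{i=1}^{l}a_i$ and $a_i:=\pr\brk{\nb{\T(d)}\ism(\theta_i,\tau_i)}$.

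By Proposition \ref{Thm_contig} (contiguity of $\prcp$ with respect to $\plp$) it is in turn enough to prove the same convergence under the planted replica distribution, where Proposition \ref{Prop_Nor} is available. The key identity is
\[
S_i(\plG,\plSIGMA_1)=\sum_{\tau\in\cS_k(\theta_i)}Q_{\theta_i,\tau_i,\tau,\omega}(\plG,\plSIGMA_1,\plSIGMA_2),
\]
obtained by partitioning according to the unique order-preserving coloring of $\theta_i$ that $\plSIGMA_2$ induces on the depth-$\omega$ neighborhood (on the event that this neighborhood is isomorphic to $\theta_i$, the outer indicator is $0$ otherwise). Each of the $\Zkc(\theta_i)$ summands tends in probability under $\plp$ to $q_{\theta_i,\omega}=\Zkc(\theta_i)^{-2}\pr[\nb{\T(d)}\ism\theta_i]$ by Proposition \ref{Prop_Nor}, and since a uniform random $k$-coloring of $\theta_i$ realises $\tau_i$ with probability $\Zkc(\theta_i)^{-1}$, the sum collapses to $\pr[\nb{\T(d)}\ism\theta_i]/\Zkc(\theta_i)=a_i$. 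The same conclusion holds with $\plSIGMA_1,\plSIGMA_2$ swapped, and since the $2l$ factors of $Y_n$ all lie in $[0,1]$, the continuous mapping theorem gives $Y_n(\plG,\plSIGMA_1,\plSIGMA_2)\to c^{2}$ in probability under $\plp$.

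The final step is to convert this into convergence in probability of $\bck{Y_n}_{\G}=(n^{-l}X_n)^{2}$ over $\G$ alone. Because $Y_n\in[0,1]$, for every $\eps>0$
\[
\abs{(n^{-l}X_n)^{2}-c^{2}}=\abs{\bck{Y_n-c^{2}}_{\G}}\le\eps+\bck{\vecone\cbc{\abs{Y_n-c^{2}}>\eps}}_{\G},
\]
and the $\G$-expectation of the last term equals $\prcp\brk{\abs{Y_n-c^{2}}>\eps}=o(1)$, so Markov's inequality shows that the same is true w.h.p.\ over $\G$. Hence $(n^{-l}X_n)^{2}\to c^{2}$ in probability and taking positive square roots yields $n^{-l}X_n\to c$ in probability. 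The main subtlety is this last transfer from a joint $(\G,\SIGMA_1,\SIGMA_2)$-statement to a pure $\G$-statement, which is handled cleanly by the boundedness of $Y_n$; the substance of the proof is the identity above combined with the two cited propositions.
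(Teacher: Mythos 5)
Your proof is correct, and it takes a genuinely different and more economical route than the paper's. The paper derives Proposition~\ref{Prop_replicas} through a chain of auxiliary statements: Lemma~\ref{Cor_Nor} (which shows that a replica covariance quantity vanishes), Lemma~\ref{Lemma_replicas} (a Cauchy--Schwarz argument bounding the number of ``bad'' vertex tuples), and Corollary~\ref{Cor_replicas} (a telescoping inclusion--exclusion that passes from single-coloring deviations to deviations of the full product), combined in the end with local weak convergence of $\G$ to the Galton--Watson tree. You bypass all of this by observing that $(n^{-l}X_n)^2=\bck{Y_n(\G,\SIGMA_1,\SIGMA_2)}_{\G}$ and then expressing each factor $S_i(\plG,\plSIGMA_1)$ in terms of the quantities $Q_{\theta_i,\tau_i,\tau,\omega}$ that Proposition~\ref{Prop_Nor} controls directly. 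The identity $S_i(\plG,\plSIGMA_1)=\sum_{\tau\in\cS_k(\theta_i)}Q_{\theta_i,\tau_i,\tau,\omega}(\plG,\plSIGMA_1,\plSIGMA_2)$ is correct: the inner sum over $\tau$ collapses to the structural indicator $\vecone\{\nbg{\plG}{v}\ism\theta_i\}$ (the order-preserving isomorphism is unique when it exists, and $\plSIGMA_2$ is a proper coloring), and that indicator is redundant next to $\vecone\{\nbh{\plG}{v}{\plSIGMA_1}\ism(\theta_i,\tau_i)\}$. Summing $\Zkc(\theta_i)$ copies of $q_{\theta_i,\omega}$ gives $\Zkc(\theta_i)^{-1}\pr[\partial^\omega\T(d)\ism\theta_i]=a_i$ as you say, contiguity and the continuous mapping theorem give $Y_n\to c^2$ in probability under $\prcp$, and the boundedness of $Y_n$ together with Markov's inequality cleanly transfers the joint $(\G,\SIGMA_1,\SIGMA_2)$-statement to a statement about $\G$ alone. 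Taking nonnegative square roots completes the proof.

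The trade-off to be aware of is that the paper's longer route establishes the intermediate Corollary~\ref{Cor_replicas}, which gives \emph{per-tuple} control: for all but $o(n^l)$ vertex tuples the Gibbs marginal of the local coloring is uniformly close to the reference product measure, not merely close on average over tuples. That per-tuple control is what really underlies the paper's derivation of the total-variation statement Corollary~\ref{Cor_xlwc}. Your replica argument delivers Proposition~\ref{Prop_replicas} exactly as stated but does not by itself recover the per-tuple refinement, so if one wanted Corollary~\ref{Cor_xlwc} along your route one would need to reinstate something like Corollary~\ref{Cor_replicas} separately (which your approach can also be adapted to do, at the cost of bringing back some of the bookkeeping you avoided). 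For the proposition itself, your argument is a clear simplification.
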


\noindent
The purpose of \Prop s~\ref{Thm_contig} and~\ref{Prop_Nor} was to facilitate the proof of the following fact.

\begin{lemma}\label{Cor_Nor}
Let $\theta$ be a rooted tree and let $\tau\in\cS_k(\theta)$.
Moreover, set
	\begin{align*}
	Q(v)&=\vecone\cbc{\nbg{\G}{v}\ism\theta}\cdot
			\bck{\prod_{j=1}^2\bc{\vecone\cbc{\partial^\omega(\G,v,\SIGMA_j)\ism(\theta,\tau)}-\Zkc(\theta)^{-1}}
		}_{\G},&
	Q&=\frac1n\sum_{v\in[n]}Q(v).
	\end{align*}
Then $Q$ converges to $0$ in probability.
\end{lemma}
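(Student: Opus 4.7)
The plan is to expand the product inside the $\bck{\cdot}_{\G}$ bracket of $Q(v)$ and identify the three resulting terms with quantities controlled by the planted-replica analysis. Since $\vecone\cbc{\partial^\omega(G,v,\sigma)\ism(\theta,\tau)}$ already forces $\partial^\omega(G,v)\ism\theta$, the outer indicator $\vecone\cbc{\partial^\omega(G,v)\ism\theta}$ disappears upon expansion of the two cross terms, and after summation over $v$ one is left with
\[
Q \;=\; \bck{Q_{\theta,\tau,\tau,\omega}(\G,\SIGMA_1,\SIGMA_2)}_{\G} \;-\; \frac{2}{\Zkc(\theta)}\bck{R_{\theta,\tau,\omega}(\G,\SIGMA)}_{\G} \;+\; \frac{N_{\theta,\omega}(\G)}{\Zkc(\theta)^2},
\]
where $R_{\theta,\tau,\omega}(G,\sigma)=n^{-1}\sum_{v}\vecone\cbc{\partial^\omega(G,v,\sigma)\ism(\theta,\tau)}$ and $N_{\theta,\omega}(G)=n^{-1}\sum_{v}\vecone\cbc{\partial^\omega(G,v)\ism\theta}$.

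Next, I would read off the limits of these three quantities from \Prop~\ref{Prop_Nor}. Directly, $Q_{\theta,\tau,\tau,\omega}\to q_{\theta,\omega}$ in probability under $\plp$. Summing the conclusion of \Prop~\ref{Prop_Nor} over $\tau_2\in\cS_k(\theta)$ and using that in the vertex-in-$\RR$ formalism every order-preserving rooted isomorphism of $\theta$ is unique, so that $\sum_{\tau_2}\vecone\cbc{\partial^\omega(G,v,\sigma)\ism(\theta,\tau_2)}=\vecone\cbc{\partial^\omega(G,v)\ism\theta}$, yields $R_{\theta,\tau,\omega}\to\Zkc(\theta)\,q_{\theta,\omega}=\Zkc(\theta)^{-1}\pr\brk{\partial^\omega\T(d)\ism\theta}$ in probability under $\plp$. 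Summing once more over $\tau_1$ gives $N_{\theta,\omega}\to\pr\brk{\partial^\omega\T(d)\ism\theta}$. By the contiguity statement \Prop~\ref{Thm_contig}, each of these in-probability limits carries over to the random replica model $\prcp$; for the $\G$-only quantity $N_{\theta,\omega}$ this is just convergence under $\gnm$, since conditioning on $k$-colorability is innocuous below $\dc$.

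The step I expect to require the most care is passing from convergence in probability of the unconditioned random variables $Q_{\theta,\tau,\tau,\omega}$ and $R_{\theta,\tau,\omega}$ under $\prcp$ to convergence of their $\bck{\cdot}_{\G}$-averages, which are conditional expectations over the sampled colorings with $\G$ held fixed. I would handle this by noting that both random variables are bounded by $1$, so convergence in probability is equivalent to $L^2$ convergence, and then Jensen's inequality $\Erw\brk{(\bck{X}_{\G}-c)^2}\leq\Erw\brk{(X-c)^2}$ transfers the $L^2$ convergence to the conditional averages, hence also to convergence in probability over the choice of $\G$. Combining the three limits and using $q_{\theta,\omega}=\Zkc(\theta)^{-2}\pr\brk{\partial^\omega\T(d)\ism\theta}$, the contributions collapse as $(1-2+1)\Zkc(\theta)^{-2}\pr\brk{\partial^\omega\T(d)\ism\theta}=0$, giving $Q\to0$ in probability as required.
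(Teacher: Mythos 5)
Your proof is correct and follows essentially the same strategy as the paper's: expand the centred product, read off the limits of the resulting terms from \Prop~\ref{Prop_Nor} (with marginalisation over $\tau_2$ and then $\tau_1$), transfer them to the random replica model via \Prop~\ref{Thm_contig}, and pass to the $\bck{\cdot}_{\G}$-average using boundedness. The paper organises the expansion into two pieces $Q'+\frac{2}{z}Q''$ rather than your three, but this is merely cosmetic, and your explicit handling of the step from $\prcp$-convergence of the raw random variable to convergence of its conditional average over colorings — which the paper leaves tacit — is a sound addition.
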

\begin{proof}
Let $t(G,v,\sigma)=\vecone\cbc{\nbh{G}{v}{\sigma}\ism(\theta,\tau)}$ and $z=\Zkc(\theta)$ for brevity.
Then
	\begin{align*}
	Q(v)&=\vecone\cbc{\nbg{\G}{v}\ism\theta}\cdot
		\bck{(t(\G,v,\SIGMA_1)-z^{-1})(t(\G,v,\SIGMA_2)-z^{-1})}\\
		&=\vecone\cbc{\nbg{\G}{v}\ism\theta}\cdot
				\bc{\brk{\bck{t(\G,v,\SIGMA_1)t(\G,v,\SIGMA_2)}-z^{-2}}+
					2z^{-1}\brk{z^{-1}-\bck{t(\G,v,\SIGMA)}}}.
	\end{align*}
Hence, setting
	\begin{align*}
	Q'(v)&=\vecone\cbc{\nbg{\G}{v}\ism\theta}\cdot\brk{\bck{t(\G,v,\SIGMA_1)t(\G,v,\SIGMA_2)}-z^{-2}},&
	Q''(v)&=\vecone\cbc{\nbg{\G}{v}\ism\theta}\cdot\brk{z^{-1}-\bck{t(\G,v,\SIGMA)}},\\
	Q'&=\frac1n\sum_{v\in[n]}Q'(v),&
		Q''&=\frac1n\sum_{v\in[n]}Q''(v),
	\end{align*}
we obtain
	\begin{equation}\label{eqQQQ}
	Q=Q'+\frac2zQ''.
	\end{equation}
Now, let $(\hat \G,\hat\SIGMA_1,\hat\SIGMA_2)$ denote a random dicolored graph chosen from the planted replica model and set
	\begin{align*}
	\hat Q'(v)&=\vecone\cbc{\nbg{\hat \G}{v}\ism\theta}\cdot\brk{t(\hat\G,v,\hat\SIGMA_1)t(\hat\G,v,\hat\SIGMA_2)-z^{-2}},&
	\hat Q''(v)&=\vecone\cbc{\nbg{\hat\G}{v}\ism\theta}\cdot\brk{z^{-1}-t(\hat\G,v,\hat\SIGMA_1)},\\
	\hat Q'&=\frac1n\sum_{v\in[n]}\hat Q'(v),&
		\hat Q''&=\frac1n\sum_{v\in[n]}\hat Q''(v),
	\end{align*}
Then \Prop~\ref{Prop_Nor} shows that $\hat Q'$ converges to $0$ in probability.
In addition, applying \Prop~\ref{Prop_Nor} and marginalising $\hat\SIGMA_2$ implies that $\hat Q''$ converges to $0$ in probability as well.
Hence, \Prop~\ref{Thm_contig} entails that $Q'$, $Q''$ converge to $0$ in probability.
Thus, the assertion follows from~(\ref{eqQQQ}).
\end{proof}

\noindent
We complete the proof of \Prop~\ref{Prop_replicas} by generalising the elegant argument that was used in~\cite[\Prop~3.2]{GM}
to establish a statement similar to the $\omega=0$ case of \Prop~\ref{Prop_replicas}.

\begin{lemma}\label{Lemma_replicas}
There exists a sequence $\eps=\eps(n)=o(1)$ such that the following is true.
Let  $\theta_1,\ldots,\theta_l$ be rooted trees, let $\tau_1\in\cS_k(\theta_1),\ldots,\tau_l\in\cS_k(\theta_l)$,
	let $\emptyset\neq J\subset[l]$ and let $\omega\geq0$ be an integer.
For a graph $G$ let  $\cX_{\theta_1,\ldots,\theta_l}(G,J,\omega)$
be the set of all vertex sequences $u_1,\ldots,u_l$ such that $\nbg{G}{u_i}\ism\theta_i$ while
	$$\abs{\bck{\prod_{i\in J}\vecone\cbc{\nbh{G}{u_i}{\SIGMA}\ism(\theta_i,\tau_i)}-\frac1{\Zkc(\theta_i)}}_G}>\eps.$$
Then $|\cX_{\theta_1,\ldots,\theta_l}(\G,J,\omega)|\leq\eps n^l$ \whp
\end{lemma}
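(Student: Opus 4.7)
The plan is to bound, for each fixed choice of rooted trees $\theta_1,\ldots,\theta_l$, colorings $\tau_i$, subset $\emptyset\neq J\subset[l]$ and depth $\omega$, the quantity
\[
E(\G)=\frac1{n^l}\sum_{u_1,\ldots,u_l}\prod_{i=1}^l\vecone\cbc{\partial^\omega(\G,u_i)\ism\theta_i}\cdot\bck{\prod_{i\in J}\bc{Y_i-z_i^{-1}}}_\G^{\,2},
\]
where I abbreviate $Y_i=\vecone\cbc{\partial^\omega(\G,u_i,\SIGMA)\ism(\theta_i,\tau_i)}$ and $z_i=\Zkc(\theta_i)$, and then conclude via Markov's inequality.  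The algebraic crux is that introducing two independent replicas $\SIGMA_1,\SIGMA_2$ of the coloring measure rewrites the squared bracket as the single bracket $\bck{\prod_{i\in J}(Y_i(\SIGMA_1)-z_i^{-1})(Y_i(\SIGMA_2)-z_i^{-1})}_\G$, and then pulling the sum over $(u_1,\ldots,u_l)$ through the bracket separates the $i\notin J$ coordinates into factors $N_i/n\leq1$ with $N_i=|\{u:\partial^\omega(\G,u)\ism\theta_i\}|$.  What remains is
\[
E(\G)\leq\bck{\prod_{i\in J}M_i}_\G,\qquad M_i=\frac1n\sum_u\vecone\cbc{\partial^\omega(\G,u)\ism\theta_i}\bc{Y_i(u,\SIGMA_1)-z_i^{-1}}\bc{Y_i(u,\SIGMA_2)-z_i^{-1}}.
\]
Since every summand defining $M_i$ lies in $[-1,1]$ we have $|M_i|\leq1$, and therefore $E(\G)\leq\bck{|M_{i_0}|}_\G$ for any fixed $i_0\in J$.

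The main step is to prove $\bck{|M_{i_0}|}_\G\to0$ in probability over $\G$.  Expanding the product yields
\[
M_{i_0}=U_{i_0}-z_{i_0}^{-1}\bc{T_{i_0}(\SIGMA_1)+T_{i_0}(\SIGMA_2)}+z_{i_0}^{-2}\cdot N_{i_0}/n,
\]
with $U_{i_0}=Q_{\theta_{i_0},\tau_{i_0},\tau_{i_0},\omega}(\G,\SIGMA_1,\SIGMA_2)$ and $T_{i_0}(\sigma)=\frac1n\sum_u\vecone\cbc{\partial^\omega(\G,u,\sigma)\ism(\theta_{i_0},\tau_{i_0})}$. Marginalising the second replica shows that $T_{i_0}(\SIGMA_1)=\sum_{\tau'\in\cS_k(\theta_{i_0})}Q_{\theta_{i_0},\tau_{i_0},\tau',\omega}$, and marginalising both shows that $N_{i_0}/n=\sum_{\tau_1',\tau_2'\in\cS_k(\theta_{i_0})}Q_{\theta_{i_0},\tau_1',\tau_2',\omega}$.  \Prop~\ref{Prop_Nor} therefore yields, in probability under the planted replica model $\plp$, the limits $U_{i_0}\to z_{i_0}^{-2}p$, $T_{i_0}(\SIGMA_j)\to z_{i_0}^{-1}p$ and $N_{i_0}/n\to p$, where $p=\pr\brk{\partial^\omega\T(d)\ism\theta_{i_0}}$; the identity $z_{i_0}^{-2}p-2z_{i_0}^{-2}p+z_{i_0}^{-2}p=0$ then gives $M_{i_0}\to0$ in probability under $\plp$.  \Prop~\ref{Thm_contig} transfers this convergence to $\rrm$, and the uniform bound $|M_{i_0}|\leq1$ upgrades convergence in probability of $M_{i_0}$ under $\rrm$ to $\Erw_\G\bck{|M_{i_0}|}_\G\to0$, whence $\bck{|M_{i_0}|}_\G\to0$ in probability over $\G$.

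Having established $E(\G)\to0$ in probability, I pick a sequence $\eps=\eps(n)=o(1)$ slowly enough that $\pr\brk{E(\G)>\eps^3}\to0$.  Since $|\cX_{\theta_1,\ldots,\theta_l}(\G,J,\omega)|\cdot\eps^2\leq n^lE(\G)$ directly from the definition of $\cX$, this gives $|\cX|\leq\eps n^l$ \whp, as required.  The main obstacle is the step converting the ``first-moment'' statement $\bck{M_{i_0}}_\G\to0$, which by itself is essentially the content of \Lem~\ref{Cor_Nor}, into the stronger ``$L^1$'' statement $\bck{|M_{i_0}|}_\G\to0$: since $M_{i_0}$ can take either sign over realisations of the two replicas, one genuinely needs the in-probability convergence of \Prop~\ref{Prop_Nor} applied to three separate local statistics of the pair $(\hat\SIGMA_1,\hat\SIGMA_2)$, together with the contiguity provided by \Prop~\ref{Thm_contig}, in order to deduce that $M_{i_0}$ itself, and not merely its coloring-average, vanishes in probability.
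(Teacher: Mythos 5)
Your proof is correct, and the overall route is the same as the paper's: the replica trick $\bck{X}^2_{\G}=\bck{X(\SIGMA_1)X(\SIGMA_2)}_{\G}$, the factorisation of the $u$-sum, the appeal to Propositions~\ref{Prop_Nor} and~\ref{Thm_contig}, and a Markov-type step at the end. The genuine difference is where you place the absolute values. The paper invokes Lemma~\ref{Cor_Nor} as a black box, which asserts exactly $Q_{i_0}=\bck{M_{i_0}}_{\G}\to 0$ in probability, and then bounds $\frac{\eps^2}{n^l}|\cX|\leq\frac1{n^l}\sum_u\bck{\prod_{i\in J}(t_i(u_i,\SIGMA)-z_i^{-1})}^2_{\G}\prod_i\chi_i(u_i)\leq Q_{i_0}$. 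The second inequality is asserted with the annotation ``as $\SIGMA_1,\SIGMA_2$ are independent,'' i.e.\ one drops the factors $(t_i(u_i,\SIGMA_1)-z_i^{-1})(t_i(u_i,\SIGMA_2)-z_i^{-1})$ for $i\in J\setminus\{i_0\}$. As you observe, after summing in $u$ this amounts to claiming $\prod_{i\notin J}(N_i/n)\,\bck{\prod_{i\in J}M_i}_{\G}\leq\bck{M_{i_0}}_{\G}$, and since the dropped $M_i$'s can be negative while $\bck{M_{i_0}}_{\G}\leq\bck{|M_{i_0}|}_{\G}$, the chain really yields $\leq\bck{|M_{i_0}|}_{\G}$ rather than $\leq\bck{M_{i_0}}_{\G}$. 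Your version closes this gap: you carry the absolute value, show $E(\G)\leq\bck{|M_{i_0}|}_{\G}$, and then prove the $L^1$ statement $\Erw_{\G}\bck{|M_{i_0}|}_{\G}\to 0$ directly by expanding $M_{i_0}$ into the $Q_{\theta,\tau,\tau',\omega}$ statistics, applying Proposition~\ref{Prop_Nor} under $\plp$, transferring via contiguity to $\rrm$, and finally using $|M_{i_0}|\leq 1$ together with bounded convergence. This is precisely what Lemma~\ref{Cor_Nor}'s proof does internally (your $U_{i_0}$, $T_{i_0}$, $N_{i_0}/n$ match their $\hat Q'$, $\hat Q''$ decomposition), so you have not introduced any new machinery; rather you have unrolled that lemma so that you can extract the slightly stronger $\bck{|M_{i_0}|}_{\G}\to 0$ rather than merely $\bck{M_{i_0}}_{\G}\to 0$. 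The final Markov step ($|\cX|\eps^2\leq n^l E(\G)$ and choosing $\eps(n)$ tending to zero slowly) is identical. In short: same approach, but your treatment of the sign of $M_i$ is more careful and in fact patches the one place where the paper's in-line bound reads too optimistically.
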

\begin{proof}
Let $t_{i}(v,\sigma)=\vecone\cbc{\nbh{\G}{v}{\sigma}\ism(\theta_i,\tau_i)}$ and $z_i=\Zkc(\theta_i)$ for the sake of brevity.
Moreover, set
	\begin{align*}
	Q_i(v)&=\vecone\cbc{\nbg{\G}{v}\ism\theta_i}\cdot\bck{(t_{i}(v,\SIGMA_1)-z_i^{-1})(t_{i}(v,\SIGMA_2)-z_i^{-1})}_{\G},&
	Q_i&=\frac1n\sum_{v\in[n]}Q_i(v).
	\end{align*}
Then \Lem~\ref{Cor_Nor} implies that there exists $\eps=\eps(n)=o(1)$ such that $\sum_{i\in[l]}Q_i\leq\eps^{3}$ \whp\
Therefore, fixing an arbitrary element $i_0\in J$, we see that \whp
	\begin{align*}
	\frac{\eps^2}{n^{l}}|\cX_{\theta_1,\ldots,\theta_l}(\G,J,\omega )|
		&\leq\frac1{n^{l}}\sum_{u_1,\ldots,u_l\in[n]}\bck{\prod_{i\in J}( t_i(u_i,\SIGMA)-z_i^{-1})}_{\hspace{-1mm}\G}^2\,
			\prod_{i=1}^l\vecone\cbc{\nbg{\G}{u_i}\ism\theta_i}\\
		&\hspace{-2cm}\leq
			\frac1{n^{l}}\sum_{u_1,\ldots,u_l\in[n]}\bck{( t_{i_0}(u_{i_0},\SIGMA_1)-z_{i_0}^{-1})( t_{i_0}(u_{i_0},\SIGMA_2)-z_{i_0}^{-1})}_{\hspace{-1mm}\G}\,
			\prod_{i=1}^l\vecone\cbc{\nbg{\G}{u_i}\ism\theta_i}
			&\mbox{[as $\SIGMA_1,\SIGMA_2$ are independent]}\\
		&\hspace{-2cm}\leq
		\frac1{n^{l}}\sum_{u_1,\ldots,u_l\in[n]}Q_{i_0}(u_{i_0})=Q_{i_0}\leq\eps^{3},
	\end{align*}
whence $|\cX_{\theta_1,\ldots,\theta_l}(\G,J,\omega)|\leq\eps n^l$ \whp
\end{proof}

\begin{corollary}\label{Cor_replicas}
Let $\omega\geq0$ be an integer,
let $\theta_1,\ldots,\theta_l$ be rooted trees,
let $\tau_1\in\cS_k(\theta_1),\ldots,\tau_l\in\cS_k(\theta_l)$ and let $\delta>0$.
For a graph $G$ let $Y(G)$ be the number of vertex sequences $v_1,\ldots,v_l$ such that
$\nbg{G}{v_i}\ism\partial^\omega\theta_i$ while
	\begin{equation}\label{eqProp_replicas}
	\abs{\bck{\prod_{i\in[l]}\vecone\cbc{\nbh{G}{v_i}{\SIGMA}\ism(\theta_i,\tau_i)}}_G-
		\prod_{i\in[l]}\frac1{\Zkc(\theta_i)}}>\delta.
	\end{equation}
Then
	$n^{-l}Y(\G)$ converges to $0$ in probability.
\end{corollary}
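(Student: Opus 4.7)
The plan is to deduce Corollary~\ref{Cor_replicas} from Lemma~\ref{Lemma_replicas} by a standard product-expansion identity followed by a union bound over subsets of $[l]$. Write $t_i(v,\sigma)=\vecone\cbc{\nbh{\G}{v}{\sigma}\ism(\theta_i,\tau_i)}$ and $z_i=\Zkc(\theta_i)$ for brevity. For any tuple $(v_1,\ldots,v_l)$ I would use the pointwise (in $\sigma$) identity
\[
\prod_{i\in[l]}t_i(v_i,\sigma)=\prod_{i\in[l]}\br{\frac1{z_i}+\br{t_i(v_i,\sigma)-\frac1{z_i}}}=\sum_{J\subset[l]}\br{\prod_{i\notin J}\frac1{z_i}}\prod_{i\in J}\br{t_i(v_i,\sigma)-\frac1{z_i}}.
\]
Averaging over $\SIGMA$ and isolating the $J=\emptyset$ summand rewrites the quantity inside the absolute value in~(\ref{eqProp_replicas}) as
\[
\bck{\prod_{i\in[l]}t_i(v_i,\SIGMA)}_{\G}-\prod_{i\in[l]}\frac1{z_i}=\sum_{\emptyset\neq J\subset[l]}\br{\prod_{i\notin J}\frac1{z_i}}\bck{\prod_{i\in J}\br{t_i(v_i,\SIGMA)-\frac1{z_i}}}_{\G}.
\]

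Since $l$ is fixed and $1/z_i\leq 1$, for the right-hand side of the previous display to exceed $\delta$ in absolute value at least one of the $2^l-1$ summands indexed by a non-empty $J$ must have magnitude exceeding $\delta/(2^l-1)$. I would let $\eps=\eps(n)=o(1)$ be the sequence provided by Lemma~\ref{Lemma_replicas} and restrict to $n$ large enough that $\eps<\delta/(2^l-1)$. By the very definition of $\cX_{\theta_1,\ldots,\theta_l}(\G,J,\omega)$, any tuple $(v_1,\ldots,v_l)$ counted by $Y(\G)$ must then satisfy $\nbg{\G}{v_i}\ism\theta_i$ for every $i$ and belong to $\cX_{\theta_1,\ldots,\theta_l}(\G,J,\omega)$ for at least one $\emptyset\neq J\subset[l]$.

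Applying Lemma~\ref{Lemma_replicas} separately to each non-empty $J\subset[l]$ and taking a union bound over the $2^l-1$ such sets then yields
\[
Y(\G)\leq\sum_{\emptyset\neq J\subset[l]}\abs{\cX_{\theta_1,\ldots,\theta_l}(\G,J,\omega)}\leq(2^l-1)\eps n^l=o(n^l)
\]
\whp, which is precisely the convergence of $n^{-l}Y(\G)$ to zero in probability. I do not expect any genuine obstacle at this stage: the actual analytic content, namely the two-replica covariance bound obtained by passing through the planted replica model, has already been discharged in Lemma~\ref{Cor_Nor} and propagated to Lemma~\ref{Lemma_replicas}, and what remains for Corollary~\ref{Cor_replicas} is simply the elementary inclusion-exclusion bookkeeping that lifts the single-factor control of Lemma~\ref{Lemma_replicas} to a bound on the whole product.
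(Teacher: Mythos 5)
Your proof is correct, and it reaches the conclusion from the same key ingredient, Lemma~\ref{Lemma_replicas}, but via a cleaner algebraic route than the paper. The paper's proof fixes a tuple outside $\bigcup_{J}\cX_{\theta_1,\ldots,\theta_l}(\G,J,\omega)$ and establishes the estimate $\abs{\bck{\prod_{i\in J}t_i}_{\G}-\prod_{i\in J}z_i^{-1}}\leq C_J\eps^{1/2}$ by induction on $\abs J$, starting from the expansion $\prod_{i\in J}(t_i-z_i^{-1})=\sum_{I\subset J}(-1)^{|I|}\prod_{i\in I}z_i^{-1}\prod_{i\in J\setminus I}t_i$, which writes the centered product in terms of uncentered sub-products, and then solving backward for the quantity of interest using the induction hypothesis on the lower-order terms. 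You instead use the dual identity
\begin{equation*}
\prod_{i\in[l]}t_i-\prod_{i\in[l]}z_i^{-1}=\sum_{\emptyset\neq J\subset[l]}\Bigl(\prod_{i\notin J}z_i^{-1}\Bigr)\prod_{i\in J}\bigl(t_i-z_i^{-1}\bigr),
\end{equation*}
which writes the uncentered product directly in terms of centered sub-products, so that after averaging over $\SIGMA$ each summand is exactly the quantity controlled by Lemma~\ref{Lemma_replicas} and no induction is required; a union bound over the $2^l-1$ nonempty $J$ finishes the proof. Your route is more economical (it also happens to give the marginally sharper bound $O(\eps)$ in place of the paper's $O(\eps^{1/2})$, although this gain is irrelevant for the qualitative statement of the corollary), while the paper's inductive telescoping buys nothing extra here; the two arguments are morally equivalent but yours is the tidier piece of bookkeeping.
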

\begin{proof}
Let $z_i=\Zkc(\partial^\omega\theta_i)$ 
for the sake of brevity.
Let $\cE_{\theta_1,\ldots,\theta_l}$ be the
set of all $l$-tuples $(v_1,\ldots,v_l)$ of distinct vertices such that $\nbg{\G}{v_i}\ism\theta_i$ for all $i\in[l]$.
Moreover, with the notation of \Lem~\ref{Lemma_replicas} let 
	$$\cX_{\theta_1,\ldots,\theta_l}=\bigcup_{\emptyset\neq J\subset[l]}\cX_{\theta_1,\ldots,\theta_l}(\G,J,\omega)$$
and set $\cY_{\theta_1,\ldots,\theta_l}=\cE_{\theta_1,\ldots,\theta_l}\setminus \cX_{\theta_1,\ldots,\theta_l}$.
With $\eps=\eps(n)=o(1)$ from \Lem~\ref{Lemma_replicas},
we are going to show that for each $J\subset[l]$ there exists an ($n$-independent) number $C_J$ such that
	\begin{equation}\label{eqProp_replicas1}
	\abs{\bck{\prod_{i\in J}\vecone\cbc{\nbh{\G}{v_i}{\SIGMA}\ism(\theta_i,\tau_i)}}_{\hspace{-1mm}\G}-
		\prod_{i\in J}z_i^{-1}}\leq C_J\eps^{1/2}
		\qquad\mbox{for all $(v_1,\ldots,v_l)\in\cY_{\theta_1,\ldots,\theta_l}$}.
	\end{equation}
Since $|\cX_{\theta_1,\ldots,\theta_l}|=o(n^l)$ \whp\ by \Lem~\ref{Lemma_replicas}, the assertion follows from~(\ref{eqProp_replicas1}) by setting $J=[l]$.

The proof of (\ref{eqProp_replicas1}) is by induction on $\abs J$.
In the case $J=\emptyset$ there is nothing to show as both products are empty.
As for the inductive step,
set $t_i=\vecone\{\nbh{\G}{v_i}{\SIGMA}\ism(\theta_i,\tau_i)\}$ for the sake of brevity.
Then
	\begin{align}\nonumber
	\bck{\prod_{i\in J}t_i-z_i^{-1}}_{\hspace{-1mm}\G}&=\sum_{I\subset J}(-1)^{|I|}\prod_{i\in I}z_i^{-1}\bck{\prod_{i\in J\setminus I}t_i}_{\hspace{-1mm}\G}\\
		&=\bck{\prod_{i\in J}t_i-\prod_{i\in J}z_i^{-1}}_{\hspace{-1mm}\G}+\prod_{i\in J}z_i^{-1}+\sum_{\emptyset\neq I\subset J}
			(-1)^{|I|}\prod_{i\in I}z_i^{-1}\bck{\prod_{i\in J\setminus I}t_i}_{\hspace{-1mm}\G}.
			\label{eqTelescope}
	\end{align}
By the induction hypothesis, for all $\emptyset\neq I\subset J$ we have
	\begin{equation}\label{eqTelescope2}
	\abs{\bck{\prod_{i\in J\setminus I}t_i}_{\hspace{-1mm}\G}-\prod_{i\in J\setminus I}z_i^{-1}}\leq C_I\eps^{1/2}.
	\end{equation}
Combining (\ref{eqTelescope}) and~(\ref{eqTelescope2}) and using the triangle inequality, we see that there exists $C_J>0$ such that
	\begin{equation}\label{eqTelescope3}
	\abs{\bck{\prod_{i\in J}t_i-z_i^{-1}}_{\hspace{-1mm}\G}-\bck{\prod_{i\in J}t_i-\prod_{i\in J}z_i^{-1}}_{\hspace{-1mm}\G}}\leq C_J\eps^{1/2}/2.
	\end{equation}
Since $(v_1,\ldots,v_l)\not\in\cX_{\theta_1,\ldots,\theta_l}$, we have $\abs{\bck{\prod_{i\in J}t_i-z_i^{-1}}_{\G}}\leq\eps$.
Plugging this bound into~(\ref{eqTelescope3}) yields~(\ref{eqProp_replicas1}).
\end{proof}

\begin{proof}[Proof of \Prop~\ref{Prop_replicas}]
Let $\cU=\cU(\G)$ be the set of all tuples $(v_1,\ldots,v_l)\in[n]^l$ such that
	$\partial^\omega(\G,v_i)\ism\theta_i$ for all $i\in[l]$.
Since the random graph converges locally to the Galton-Watson tree~\cite{BordenaveCaputo}, \whp\ we have 
	\begin{equation}\label{eqProp_replicas_A1}
	|\cU|=o(1)+\prod_{i\in[l]}\pr\brk{\partial^\omega\T(d)\ism\theta_i}
	\end{equation}
(Alternatively, (\ref{eqProp_replicas_A1}) follows from
\Prop s~\ref{Thm_contig} and~\ref{Prop_Nor} by marginalising $\SIGMA_1,\SIGMA_2$.)
The assertion follows by combining~(\ref{eqProp_replicas_A1}) with \Cor~\ref{Cor_replicas}.
\end{proof}

\begin{proof}[Proof of \Thm~\ref{Thm_xlwc}]
As $\cP^2(\cG_k^l)$ carries the weak topology, we need to show that  for any continuous $f:\cP(\cG_k^l)\to\RR$ with a compact support,
	\begin{equation}\label{eqConv0}
	\lim_{n\to\infty}\int f\dd\vec\lambda_{n,m,k}^l=\int f \dd\vec\thet_{d,k}^l.
	\end{equation}
Thus, let $\eps>0$.
Since $\vec\thet_{d,k}^l=\lim_{\omega\to\infty}\vec\thet_{d,k}^l\brk\omega$, we have
	\begin{align*}
	\int f\dd\vec\thet_{d,k}^l&=\lim_{\omega\to\infty}\int f\dd\vec\thet_{d,k}^l[\omega]
		=\lim_{\omega\to\infty}\Erw\int f\dd\atom_{\bigotimes_{i\in[l]}\lambda_{\nb{\T^{i}(d)}}}
		=\lim_{\omega\to\infty}\Erw f\bc{\textstyle\bigotimes_{i\in[l]}\lambda_{\nb{\T^{i}(d)}}}.
	\end{align*}
Hence, there is $\omega_0=\omega_0(\eps)$ such that for $\omega>\omega_0$ we have
	\begin{align}\label{eqConv2} 
	\abs{\int f\dd\vec\thet_{d,k}^l-\Erw f\bc{\textstyle\bigotimes_{i\in[l]}\lambda_{\nb{\T^{i}(d)}}}}<\eps.
	\end{align}
Furthermore, the topology of $\cG_k$ 
is generated by the functions (\ref{eqtopology}). 
Because $f$ has a compact support, this implies that
 there is $\omega_1=\omega_1(\eps)$ such that for any $\omega>\omega_1(\eps)$ and all $\Gamma_1,\ldots,\Gamma_l\in\cG_k$ we have
	\begin{equation}\label{eqOmegaSuffices}
	\abs{f\bc{\bigotimes_{i\in[l]}\atom_{\Gamma_i}}-f\bc{\bigotimes_{i\in[l]}\atom_{\partial^\omega\Gamma_i}}}<\eps.
	\end{equation}
Hence, pick some $\omega>\omega_0+\omega_1$ and assume that $n>n_0(\eps,\omega)$ is large enough.

Let $\vec v_1,\ldots,\vec v_l$ denote vertices of $\G$ that are chosen independently and uniformly at random.
By the linearity of expectation and the definitions of $\vec\lambda_{n,m,k}^l$ and $\lambda_{\G,v_1,\ldots,v_l}$,
	\begin{align*}
	\int f\dd\vec\lambda_{n,d,k}^l&=\Erw\int f\dd\atom_{\lambda_{\G,\vec v_1,\ldots,\vec v_l}}
		=\Erw f(\lambda_{\G,\vec v_1,\ldots,\vec v_l})
	=\Erw\bck{f({\textstyle \bigotimes_{i\in[l]}\atom_{[\G\|\vec v_i,\vec v_i,\SIGMA\|\vec v_i]}})}.
	\end{align*}
Consequently, (\ref{eqOmegaSuffices}) yields
	\begin{align}\label{eqConv1}
	\abs{\int f\dd\vec\lambda_{n,d,k}^l-
			\Erw\bck{f({\textstyle \bigotimes_{i\in[l]}\atom_{\partial^\omega[\G\|\vec v_i,\vec v_i,\SIGMA\|\vec v_i]}})}}
		&<\eps.
	\end{align}
Hence, we need to compare $\Erw\bck{f({\textstyle \bigotimes_{i\in\brk l}\atom_{\partial^\omega[\G\|\vec v_i,\vec v_i,\SIGMA\|\vec v_i]}})}$
and $\Erw f\bc{\textstyle\bigotimes_{i\in[l]}\lambda_{\nb{\T^{i}(d)}}}$.
	
Because the tree structure of $\T(d)$ stems from a Galton-Watson branching process,
there exist a finite number of pairwise non-isomorphic rooted trees $\theta_1,\ldots,\theta_h$ together
with $k$-colorings $\tau_1\in\cS_k(\theta_1),\ldots,\tau_h\in\cS_k(\theta_h)$ such that
with $p_i=\pr\brk{\nb{\T(d)}\ism(\theta_i,\tau_i)}$ we have
	\begin{equation}\label{eqNasty1}
	\sum_{i\in[h]}p_i>1-\eps.
	\end{equation}
Further, \Prop~\ref{Prop_replicas} implies that for $n$ large enough and any $i_1,\ldots,i_l\in[h]$ we have
	\begin{align}\label{eqNasty2}
	\Erw\abs{\bck{\prod_{i=1}^l\vecone\cbc{\partial^\omega[\G\|\vec v_i,\vec v_i,\SIGMA\|\vec v_i]\ism(\theta_{h_i},\tau_{h_i})}}-\prod_{i\in[l]}p_{h_i}}<h^{-l}\eps.
	\end{align}
Combining (\ref{eqOmegaSuffices}), (\ref{eqNasty1}) and (\ref{eqNasty2}), we conclude that
	\begin{equation}\label{eqNasty3}
	\abs{\Erw\bck{f({\textstyle \bigotimes_{i\in\brk l}\atom_{\partial^\omega[\G\|\vec v_i,\vec v_i,\SIGMA\|\vec v_i]}})}-
		\Erw f\bc{\textstyle\bigotimes_{i\in[l]}\lambda_{\nb{\T^{i}(d)}}}}<3l\norm f_\infty\eps.
	\end{equation}
Finally,
(\ref{eqConv0}) follows from (\ref{eqConv2}), (\ref{eqConv1}) and~(\ref{eqNasty3}).
\end{proof}

\begin{proof}[Proof of \Cor~\ref{Cor_xlwc}]
While it is not difficult to derive \Cor~\ref{Cor_xlwc} from \Thm~\ref{Thm_xlwc},
\Cor~\ref{Cor_xlwc} is actually immediate from \Prop~\ref{Prop_replicas}.
\end{proof}

\begin{proof}[Proof of \Cor~\ref{Cor_decay}]
\Cor~\ref{Cor_decay} is simply the special case of setting $\omega=0$ in \Cor~\ref{Cor_xlwc}.
\end{proof}

\begin{proof}[Proof of \Cor~\ref{Cor_reconstr}]

For  integer $\omega\geq 0$, consider the quantities 
$\frac1n\sum_{v\in[n]}\Erw[\corr_{k,\gnm}(v, \omega)]$ and $\Erw[\corr_{k,\partial^{\omega}\T(d)}(v_0, \omega)]$.
The corollary follows by showing that
\begin{eqnarray}\label{eq:target-Cor_reconstr}
\left|\frac1n\sum_{v\in[n]}\Erw[\corr_{k,\gnm}(v, \omega)]- \Erw[\corr_{k,\partial^{\omega}\T(d)}(v_0, \omega)] \right |=o(1).
\end{eqnarray}
Let us call ${\cA}$, the quantity on the l.h.s. of the above equality. It holds that
\begin{eqnarray}
\cA&\leq & \left|\frac1n\sum_{v\in[n]}\left(\Erw[\corr_{k,\gnm}(v, \omega)]- \Erw[\corr_{k,\partial^{\omega}\gnm}(v_0, \omega)] \right )\right | 
+\left|\frac1n\sum_{v\in[n]}\Erw[\corr_{k,\partial^{\omega}\gnm}(v, \omega)]- \Erw[\corr_{k,\partial^{\omega}\T(d)}(v_0, \omega)] \right |. \nonumber
\end{eqnarray}

We observe that, for any $v$-rooted  $G \in \mathfrak G$ and $\omega$ it holds that  $\corr_{k,G}(v,\omega)\in [0,1]$. 
Then,  by using Corollary \ref{Cor_xlwc} where $l=1$ (i.e. weak convergence) we get that
\begin{eqnarray}\label{eq:AbsBound1st}
\left|\frac1n\sum_{v\in[n]}\left(\Erw[\corr_{k,\gnm}(v, \omega)]- \Erw[\corr_{k,\partial^{\omega}\gnm}(v_0, \omega)] \right )\right | =o(1).
\end{eqnarray}
For bounding the second quantity we use the following observation:
The above implies that
\begin{eqnarray}\label{eq:2ndQuantVsTVD}
\left|\frac1n\sum_{v\in[n]}\Erw[\corr_{k,\partial^{\omega}\gnm}(v, \omega)]- \Erw[\corr_{k,\partial^{\omega}\T(d)}(v_0, \omega)] \right |
\leq \pr\brk{\partial^{\omega}(G(n,m),v^*) \not\ism \partial^{\omega}\T(d)}  \cdot \max_{\theta} \{ \corr_{k,\theta}(v, \omega)\}, 
\end{eqnarray}
where the $v^*$ is a randomly chosen vertex of $G(n,m)$. The probability term 
$\pr\brk{\partial^{\omega}(G(n,m),v^*) \not\ism \partial^{\omega}\T(d)}$ is w.r.t. any coupling
of $\partial^{\omega}(G(n,m),v^*)$ and $\partial^{\omega}\T(d)$. Also, the maximum index $\theta$ varies over all trees 
with at most $n$ vertices and  with at most $\omega$ levels.

Working as in Lemma \ref{Lemma_bin_0} we get  the following: There is a coupling
of $\partial^{\omega}(G(n,m),v^*)$ and $\partial^{\omega}\T(d)$, where   $d=2m/n$,  such that 
\begin{eqnarray}
\pr\brk{\partial^{\omega}(G(n,m),v) \ism \partial^{\omega}\T(d)}=1-o(1).\label{eq:AsymEqDistr}
\end{eqnarray}
 Plugging (\ref{eq:AsymEqDistr}) into (\ref{eq:2ndQuantVsTVD}) we get that
\begin{eqnarray}\label{eq:AbsBound2nd}
\left|\frac1n\sum_{v\in[n]}\Erw[\corr_{k,\partial^{\omega}\gnm}(v, \omega)]- \Erw[\corr_{k,\partial^{\omega}\T(d)}(v_0, \omega)] \right |=o(1),
\end{eqnarray}
since it always holds that $\corr_{k,\theta}(v, \omega)\in [0,1]$.
From (\ref{eq:AbsBound1st}) and (\ref{eq:AbsBound2nd}), we get that $\cA=o(1)$, i.e.  (\ref{eq:target-Cor_reconstr})
is true. The corollary follows.

%

%



\end{proof}

\begin{remark}
Alternatively, we could have deduced \Cor~\ref{Cor_reconstr} from \Lem~\ref{Lemma_Z2} and \cite[\Thm~1.4]{GM}.
\end{remark}

\bigskip
\noindent{\bf Acknowledgement.}
We thank Ralph Neininger for helpful discussions.

\begin{appendix}

\section{Convergence of $\vec\thet_{d,k}^l\brk\omega$}\label{Sec_TreeSeq}

\noindent
We use a standard argument to prove that the sequence defined in~(\ref{eqTreeSeq}) converges.

\begin{lemma}\label{Lemma_TreeSeq}
The sequence $(\vec\thet_{d,k}^l\brk\omega)_{\omega\geq1}$ converges for any $d>0,k\geq3,l>0$.
\end{lemma}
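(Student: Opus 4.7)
The plan is to show weak convergence in $\cP^2(\cG_k^l)$ by combining a projection consistency for colored Galton-Watson trees with a standard density argument on local test functions. The key preliminary observation is that on any finite rooted tree $T$, the uniform distribution on $k$-colorings restricts (under the depth-$\omega_0$ truncation) to the uniform distribution on $k$-colorings of the pruned tree $\partial^{\omega_0}T$. This is because for trees, every $k$-coloring of a rooted subtree containing the root extends to precisely $(k-1)^e$ colorings of the larger tree, where $e$ counts the additional edges. Applied pointwise to each realization of the $l$ independent Galton-Watson trees $\T^i(d)$, this yields the pointwise identity
\[
\Pi_{\omega_0}^{*,\otimes l}\bigl(\bigotimes_{i\in[l]}\lambda(\partial^\omega\T^i(d))\bigr)=\bigotimes_{i\in[l]}\lambda(\partial^{\omega_0}\T^i(d)) \qquad (\omega\geq\omega_0),
\]
where $\Pi_{\omega_0}:\cG_k\to\cG_k$ is the (continuous) depth-$\omega_0$ truncation map. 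Taking expectations and pushing forward once more, this reads $\Pi_{\omega_0}^{**}\,\vec\thet_{d,k}^l[\omega]=\vec\thet_{d,k}^l[\omega_0]$ in $\cP^2(\cG_k^l)$ for all $\omega\geq\omega_0$.

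Next I would identify a convergence-determining class. Call a function $F:\cP(\cG_k^l)\to\RR$ \emph{local of depth $\omega_0$} if it factors as $F=\tilde F\circ(\Pi_{\omega_0}^*)^{\otimes l}$ for some bounded continuous $\tilde F$. Because the topology on $\cG_k$ is generated, by definition, by the indicator functions $\Gamma\mapsto\mathbf{1}\{\partial^\omega\Gamma=\partial^\omega\Gamma_0\}$, the class of local functions (as $\omega_0$ varies) forms a point-separating $*$-subalgebra of $C_b(\cG_k^l)$; pulling back through $\mu\mapsto\int h\,d\mu$ and then again to $\cP^2$, one obtains a point-separating subalgebra of $C_b(\cP^2(\cG_k^l))$. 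By the standard Polish-space fact (a point-separating, multiplicative class of bounded continuous functions is convergence-determining on a Polish space), it suffices to verify convergence of $\int F\,d\vec\thet_{d,k}^l[\omega]$ for every local bounded continuous $F$.

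Now the projection consistency trivializes this: for $F$ local of depth $\omega_0$ and $\omega\geq\omega_0$,
\[
\int F\,d\vec\thet_{d,k}^l[\omega]=\int \tilde F\,d\bigl(\Pi_{\omega_0}^{**}\vec\thet_{d,k}^l[\omega]\bigr)=\int \tilde F\,d\vec\thet_{d,k}^l[\omega_0],
\]
so the integral is eventually constant in $\omega$, hence convergent. Combined with the previous paragraph, this proves the sequence converges in $\cP^2(\cG_k^l)$; the limit $\vec\thet_{d,k}^l$ is uniquely characterized by its consistent depth-$\omega_0$ projections $\vec\thet_{d,k}^l[\omega_0]$, matching the combinatorial description (sample $l$ independent Galton-Watson trees and color each by the standard ``root gets a uniform color, children avoid their parent's color uniformly'' procedure). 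The step I expect to require the most care is the convergence-determining argument at the $\cP^2$ level: verifying that the two successive liftings (from $\cG_k^l$ to $\cP(\cG_k^l)$ to $\cP^2(\cG_k^l)$) of the local class yield a sufficiently rich family. This is essentially a standard Stone–Weierstrass/Prohorov-type bookkeeping exercise, but in a two-tier weak topology one must be mindful to invoke Polish separability at each stage rather than compactness.
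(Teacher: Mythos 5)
Your proposal is essentially the same argument as the paper's, just reorganized. Both hinge on the same key combinatorial fact: on a finite tree, pushing the uniform $k$-coloring measure forward under depth-$\omega_0$ truncation yields the uniform measure on $k$-colorings of the truncated tree, because every coloring of a rooted subtree extends to the same number $(k-1)^e$ of colorings of the full tree. The paper states this as $\int g_{\Gamma,\omega_0}\,\dd\lambda_{\partial^\omega T}=\int g_{\Gamma,\omega_0}\,\dd\lambda_{\partial^{\omega_0}T}$ and uses it in two nested stages (first to show $\lambda_{\partial^\omega T}$ converges in $\cP(\cG_k)$ for each fixed $T$, then via boundedness of $f$ and dominated convergence to conclude at the $\cP^2$ level), whereas you package it once as a projective-consistency identity $\Pi_{\omega_0}^{**}\vec\thet_{d,k}^l[\omega]=\vec\thet_{d,k}^l[\omega_0]$ and then argue directly at the $\cP^2$ level with a convergence-determining class of ``local'' test functions.

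The one place you should be a bit more careful is the claim that a point-separating multiplicative subclass of $C_b$ is automatically convergence-determining on a Polish space; as stated this is not quite a general theorem, and in any case convergence-determining presupposes a candidate limit while your argument only shows the integrals are eventually constant. To close the loop you either need to produce the candidate limit first (the consistent family $\{\vec\thet_{d,k}^l[\omega_0]\}_{\omega_0}$ does determine a measure by a projective-limit/Ionescu--Tulcea argument, since $\cG_k$ is the inverse limit of its countable discrete truncations and the same holds after the two liftings) or argue tightness and uniqueness of subsequential limits. The paper glosses over the analogous point at its inner stage (step 5b, ``it suffices to verify the $\int g_{\Gamma,\omega_0}$ converge'') in essentially the same way, so you are not missing an idea the paper has; you have simply flagged honestly a technicality that both proofs leave at the same level of informality.
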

\begin{proof}
The space $\cP^2(\cG_k^l)$ is Polish and thus complete.
Therefore, it suffices to prove that $(\vec\thet_{d,k}^l\brk\omega)_{\omega\geq1}$ is a Cauchy sequence.
As $\cP^2(\cG_k^l)$ is endowed with the weak topology,
this amounts to proving that for 
any bounded continuous function $f:\cP(\cG_k^l)\to\RR$ with a compact support and any $\eps>0$
there exists integer $N=N(\eps)\ge 0$ such that
	\begin{equation}\label{eqNor0}
	\left|\int f \dd\vec\thet_{d,k}^l\brk {\omega_1}- \int f\dd\vec\thet_{d,k}^l\brk {\omega_2}\right| < \eps
		\qquad\mbox{if $\omega_1,\omega_2\ge N$.}
	\end{equation}
By the definition of $\vec\thet_{d,k}^l$,
\begin{align}\label{eqNor1}
\int f \dd\vec\thet_{d,k}^l\brk {\omega} = 
	\Erw\int f \dd \atom_{\bigotimes_{i\in[l]}\lambda\bc{\partial^{\omega} \T^{i}(d)}}
		= \Erw f\bc{\textstyle\bigotimes_{i\in[l]}\lambda_{\partial^{\omega} \T^i(d)}}.
\end{align}
Hence, to prove (\ref{eqNor0}) if suffices to show that for any $\eps>0$ there is $N(\eps)>0$ such that
	\begin{equation}\label{eqNor01}
	\Erw\left|
	{
		f\bc{\textstyle\bigotimes_{i\in[l]}\lambda_{\partial^{\omega_1} \T^i(d)}}-f\bc{\textstyle\bigotimes_{i\in[l]}\lambda_{\partial^{\omega_2} \T^i(d)}}}
	\right| < \eps
		\qquad\mbox{for all $\omega_1,\omega_2\ge N$.}
	\end{equation}

To establish~(\ref{eqNor01}), we observe that the sequence $\lim_{\omega\to\infty}\lambda_{\partial^\omega T}$
converges for any locally finite rooted tree $T$.
Indeed, $(\lambda_{\partial^\omega T})_\omega$ is a sequence in the space $\cP(\cG_k)$, which, equipped with the weak topology, is Polish.
Hence, it suffices to prove that for any continuous function $g:\cG_k\to\RR$ with a compact support the sequence
	$\bc{\int g\dd\lambda_{\partial^\omega T}}_{\omega}$ converges.
Indeed, because the topology of $\cG_k$ is generated by the functions of the form~(\ref{eqtopology}), it suffices to verify that
that for any $\Gamma\in\cG_k$ and any $\omega_0\geq0$ the 
	sequence $\bc{\int g_{\Gamma,\omega_0}\dd\lambda_{\partial^\omega T}}_{\omega}$ converges, where
	$$g_{\Gamma,\omega_0}:\cG_k\to\cbc{0,1},\qquad \Gamma'\mapsto\vecone\cbc{\partial^{\omega_0}\Gamma=\partial^{\omega_0}\Gamma'}.$$
But this last convergence statement holds simply because the construction of $\lambda_{\partial^\omega T}$ ensures that
	$$\int g_{\Gamma,\omega_0}\dd\lambda_{\partial^\omega T}=\int g_{\Gamma,\omega_0}\dd\lambda_{\partial^{\omega_0} T}
		\qquad\mbox{for all }\omega>\omega_0.$$

Finally, because $\lim_{\omega\to\infty}\lambda_{\partial^\omega T}$ exists for any $T$,
	(\ref{eqNor01}) follows from the fact that the continuous function $f$ has a compact support.
\end{proof}

\end{appendix}

\end{document}